\newcommand*{\mailto}[1]{\href{mailto:#1}{\nolinkurl{#1}}}
\newcommand{\arxiv}[1]{\href{http://arxiv.org/abs/#1}{arXiv:#1}}
\newtheorem{theorem}{Theorem}[section]
\newtheorem{definition}[theorem]{Definition}
\newtheorem{lemma}[theorem]{Lemma}
\newtheorem{corollary}[theorem]{Corollary}
\newtheorem{remark}[theorem]{Remark}
\newtheorem{example}[theorem]{Example}
\newtheorem{hypothesis}[theorem]{Hypothesis {\bf H.}\hspace*{-0.6ex}}
\newcommand{\R}{{\mathbb R}}
\newcommand{\N}{{\mathbb N}}
\newcommand{\Z}{{\mathbb Z}}
\newcommand{\C}{{\mathbb C}}
\newcommand{\nn}{\nonumber}
\newcommand{\be}{\begin{equation}}
\newcommand{\ee}{\end{equation}}
\newcommand{\ti}{\tilde}
\newcommand{\spr}[2]{\left\langle #1,#2 \right\rangle}
\newcommand{\dpr}[3][\hr]{\left( #2 , #3 \right)_{#1}}
\newcommand{\E}{\mathrm{e}}
\newcommand{\I}{\mathrm{i}}
\newcommand{\gk}{\kappa}
\newcommand{\im}{\mathrm{Im}}
\newcommand{\re}{\mathrm{Re}}
\newcommand{\dom}{\mathfrak{D}}
\newcommand{\fdom}{\mathfrak{Q}}
\newcommand{\hr}{\mathfrak{H}}
\DeclareMathOperator{\lspan}{span}
\newcommand{\floor}[1]{\lfloor#1 \rfloor}
\newcommand{\ceil}[1]{\lceil#1 \rceil}
\newcommand{\eps}{\varepsilon}
\newcommand{\sig}{\sigma}
\newcommand{\lam}{\lambda}
\newcommand{\gam}{\gamma}
\numberwithin{equation}{section}
\begin{document}

\title[Singular $m$-Functions of Schr\"odinger Operators]{On the Singular Weyl--Titchmarsh Function of
Perturbed Spherical Schr\"odinger Operators}

\author[A.\ Kostenko]{Aleksey Kostenko}
\address{Institute of Applied Mathematics and Mechanics\\
NAS of Ukraine\\ R. Luxemburg str. 74\\
Donetsk 83114\\ Ukraine\\ and School of Mathematical Sciences\\
Dublin Institute of Technology\\
Kevin Street\\ Dublin 8\\ Ireland}
\email{\mailto{duzer80@gmail.com}}

\author[G.\ Teschl]{Gerald Teschl}
\address{Faculty of Mathematics\\ University of Vienna\\
Nordbergstrasse 15\\ 1090 Wien\\ Austria\\ and International
Erwin Schr\"odinger
Institute for Mathematical Physics\\ Boltzmanngasse 9\\ 1090 Wien\\ Austria}
\email{\mailto{Gerald.Teschl@univie.ac.at}}
\urladdr{\url{http://www.mat.univie.ac.at/~gerald/}}

\thanks{J. Diff. Eq. {\bf 250}, 3701--3739 (2011)}
\thanks{{\it Research supported by the Austrian Science Fund (FWF) under Grant No.\ Y330}}

\keywords{Schr\"odinger operators, Bessel operators, Weyl--Titchmarsh theory}
\subjclass[2000]{Primary 34B20, 34L40; Secondary 34B30, 34L05}

\begin{abstract}
We investigate the singular Weyl--Titchmarsh $m$-function of perturbed spherical Schr\"odinger operators (also
known as Bessel operators) under the assumption that the perturbation $q(x)$ satisfies $x q(x) \in L^1(0,1)$.
We show existence plus detailed properties of a fundamental system of solutions which are entire with respect
to the energy parameter. Based on this we show that the singular $m$-function belongs to the generalized
Nevanlinna class and connect our results with the theory of super singular perturbations.
\end{abstract}

\maketitle

\section{Introduction}
\label{sec:int}

In this paper we will investigate perturbed spherical Schr\"odinger operators (also known as Bessel operators)
\be\label{eq-bes}
\tau=-\frac{d^2}{dx^2}+\frac{l(l+1)}{x^2}+q(x),\quad l\ge -\frac{1}{2},\qquad x\in\R_+:=(0,+\infty),
\ee
where the potential $q$ is real-valued satisfying
\begin{equation}\label{q:hyp}
q \in L^1_{\mathrm{loc}}(\R_+), \qquad \begin{cases} x\, q(x) \in L^1(0,1), & l>-\frac{1}{2},\\
x(1-\log(x)) q(x) \in L^1(0,1), & l = -\frac{1}{2}. \end{cases}
\end{equation}
Note that we explicitly allow non-integer values of $l$ such that we also cover the case of
arbitrary space dimension $n\ge 2$, where $l(l+1)$ has to be replaced by $l(l+n-2) + (n-1)(n-3)/4$ \cite[Sec.~17.F]{wdln}.
Due to its physical importance this equation has obtained much attention in the past
and we refer for example to \cite{ahm}, \cite{gr}, \cite{kst}, \cite{se}, \cite{wdln} and the references therein.

We will use $\tau$ to describe the formal differential expression and
$H$ the self-adjoint operator acting in $L^2(\R_+)$ and given by $\tau$ together with the usual boundary condition at $x=0$:
\be
\lim_{x\to0} x^l ( (l+1)f(x) - x f'(x))=0, \qquad l\in[-\frac{1}{2},\frac{1}{2}).
\ee
We are mainly interested in the case where $\tau$ is limit point at $\infty$, but if it is not, we simply
choose another boundary condition there. Moreover, one could also replace $\R_+$ by a bounded
interval $(0,b)$.

If $l=0$ and $q \in L^1(0,1)$ such that the left endpoint is regular, it is well known
that one can associate a single function $m(z)$, the Weyl--Titchmarsh (or Weyl) $m$-function, with $H$,
such that $m(z)$ contains all the information about $H$. In the general case (in particular when $l\ge \frac{1}{2}$ and
 $\tau$ is limit point at the left endpoint) it was shown only recently that one can still introduce a
singular Weyl function $M(z)$ which serves a similar purpose (we refer to
Gesztesy and Zinchenko \cite{gz}, Fulton and Langer \cite{ful08}, \cite{fl_09}, Kurasov and Luger \cite{KurLug}, Derkach \cite{Der98}, and Dijksma and Shondin \cite{DSh_00}).
For a comprehensive treatment we refer to our recent work with Sakhnovich \cite{kst2}.

The key ingredient for defining a Weyl $m$-function is an entire system of linearly independent
solutions $\phi(z,x)$, $\theta(z,x)$ of the underlying differential equation $\tau u = z u$, $z\in\C$,
normalized such that the Wronskian $W(\theta(z),\phi(z))$ equals one. To make the connection with $H$,
one solution, say $\phi(z,x)$, has to be chosen such that it lies in the domain of $H$ near the endpoint $x=0$
(i.e., $\phi(z,.)\in L^2(0,1)$ and it satisfies the boundary condition at $x=0$ if $H$ is limit point at $x=0$).
Once $\phi(z,x)$ and $\theta(z,x)$ are given, the Weyl $m$-function $M(z)$ can be defined by the requirement that the solution
\be\label{eq:i_02}
\psi(z,x) = \theta(z,x) + M(z) \phi(z,x)
\ee
is in the domain of $H$ near $+\infty$, i.e., $\psi(z,.)\in L^2(1,+\infty)$.

While this prescription sounds rather straightforward, it has turned out to be rather subtle! Namely, the following problems
naturally arise in the study of singular $m$-functions:
\begin{itemize}
\item existence of entire solutions $\phi(z,x)$ and $\theta(z,x)$ as above.
\item analytic properties of the singular $m$-function.
\item a canonical normalization of the fundamental solutions $\phi$ and $\theta$ at a singular endpoint $x=0$.
\end{itemize}
In \cite{kst2} we have shown that a necessary and sufficient condition for a system of solutions $\phi(z,x)$ and $\theta(z,x)$ to exist is that
one operator (and hence all) associated with $\tau$ restricted to a vicinity of the singular endpoint has purely discrete spectrum.
This clearly affirmatively settles the first question. In addition, it implies that the corresponding singular $m$-function \eqref{eq:i_02}
is analytic in the entire upper (and hence lower) half plane and thus also partly settles the second question. Moreover, we have shown
that there exists a renormalization of the fundamental solutions such that the corresponding singular Weyl function is a generalized
Nevanlinna or even Herglotz--Nevanlinna function. However, the corresponding choice of fundamental solutions is not naturally given
and it was only indirectly constructed.

On the other hand, in the special case of Bessel operators \eqref{eq-bes}, under the additional assumption that the potential $q(x)$
is analytic and of Fuchs type near $x=0$, there is a natural choice of fundamental solutions, namely those obtained from the Frobenius
method. It was shown by Fulton and Langer \cite{fl_09} that this choice leads to a singular Weyl function in the generalized
Nevanlinna class $N_\kappa^\infty$, where $\kappa \le \gk_l:=\floor{\frac{l}{2}+\frac{3}{4}}$ (for the definition of $N_\kappa^\infty$
see Appendix~\ref{app:nkappa}). Here $\floor{x}= \max \{ n \in \Z | n \leq x\}$ is the usual floor function.
Moreover, for the Coulomb case $q(x) = q_0/x$, Kurasov and Luger \cite{KurLug} proved that in
fact $\kappa = \gk_l$ (see also \cite{DSh_00}, where the case $q\equiv 0$ was treated). Our approach from \cite{kst2} applied to
\eqref{eq-bes} with potential $q(x)$ satisfying \eqref{q:hyp} shows that
there is a choice of fundamental solutions such that the singular Weyl function is in $N_\kappa^\infty$ with $\kappa \le \ceil{\frac{l+1}{2}}$. Here $\ceil{x}= \min \{ n \in \Z | n \geq x\}$ is the ceil function.
However, if $q(x)$ it not analytic (at least near $x=0$) there is no natural choice since the Frobenius method breaks down in this case.
It is the aim of the present paper to give a characterization of the fundamental solutions which lead to a singular Weyl function in
$N_{\gk_l}^\infty$ thereby extending the results from \cite{fl_09} and \cite{KurLug} to the class \eqref{q:hyp}.

Our approach is based on two main ingredients:
\begin{enumerate}
\item a detailed analysis of solution of the underlying differential equation and
\item the theory of super singular perturbations \cite{be}, \cite{Der98}, \cite{DHdS}, \cite{DKSh_05}, \cite{DLSZ}, \cite{DSh_00}, \cite{Ku}, \cite{sho} (see also Appendix \ref{app:ssp}).
\end{enumerate}
More precisely, in Section~\ref{sec:ass_sol} we show that for $l>-\frac{1}{2}$ real entire solutions $\phi(z,x)$ and $\theta(z,x)$ can be chosen to satisfy
the following "asymptotic normalization" at $x=0$ (Lemma~\ref{lem_ass})
\be\label{eq:i.03}
\phi(z,x)=x^{l+1}(1+o(1)),\quad
\theta(z,x)= \begin{cases} x^{-l}\left(\frac{1}{2l+1}+o(1)\right), & l>-\frac{1}{2},\\ - x^{1/2}\log(x)(1+o(1)), & l= -\frac{1}{2}, \end{cases}
\quad x\to 0.
\ee
Note that, while the first solution $\phi(z,x)$ is unique under this normalization, the second solution $\theta(z,x)$ is not, since for any entire $f(z)$ the new solution
$\widetilde{\theta}(z,x)=\theta(z,x)+f(z)\phi(z,x)$ also satisfies \eqref{eq:i.03}. So, we need an additional normalization assumption for $\theta(z,x)$.
To this end we show that there is a Frobenius type representation for $\phi(z,x)$ and $\theta(z,x)$ (see Lemmas~\ref{lemphi} and \ref{lem:theta_fr})
and the corresponding normalization is given in Definition~\ref{def_thetafrob} (see also Corollary~\ref{cor:3.7}): we will call $\theta$ a
\emph{Frobenius type solution} if
\be\label{eq:i.04}
\lim_{x\to 0}W_x\big(\theta^{(n_l+1)}(z),\theta(z_0)\big)\equiv 0,\qquad n_l:=\floor{l+1/2},
\ee
where $W_x(f,g)=f(x)g'(x)-f'(x)g(x)$ is the usual Wronskian.
Note that such a $\theta(z,x)$ always exists since by item {\it (vi)} of Corollary~\ref{cor:wronski} this limit exists and is a real
entire function in $z$ (let us denote it by $F^{(n_l+1)}(z)$). Therefore, $\widetilde{\theta}(z,x)=\theta(z,x)-F(z)\phi(z,x)$ satisfies the above assumption.

Furthermore, the Frobenius type representation of the fundamental solutions enables us to apply the theory of super singular perturbations.
The connection between the Weyl--Titchmarsh theory for Sturm--Liouville operators and the theory of singular perturbations is well known and goes
back to the pioneering work of Mark Krein on extension theory (see, e.g., \cite{si_95}). Thus, in the regular case $l=0$ and $q\in L^1(0,1)$, the Weyl--Titchmarsh
function, which corresponds to Neumann boundary condition at $x=0$, can be considered as a $Q$-function of the operator $H$,
\[
m_N(z)=\dpr[L^2]{\delta}{(H-z)^{-1}\delta}, \quad z\in \rho(H),
\]
where $\delta$ is the Dirac delta distribution and the inner product is understood as a pairing between $W^{1,2}(\R_+)$ and $W^{-1,2}(\R_+)$
(for the details see Example~\ref{ex:c.01} and also \cite[\S I.6]{si_95}).

To introduce the $Q$-function for $H$ in the case $l\ge\frac{1}{2}$, i.e., in the limit point case at $x=0$, one needs the theory of super singular perturbations \cite{be}, \cite{DHdS}, \cite{DKSh_05}, \cite{DLSZ}, \cite{Ku}, \cite{sho}. Moreover, it was first observed in \cite{DSh_00}, \cite{KurLug} that this $Q$-function is closely connected with the singular $m$-function \eqref{eq:i_02} (note that  in \cite{Der98} Derkach introduced the singular $m$-function for Laguerre operators). For instance (see also Section \ref{sec:exmpl} below), for $q\equiv 0$, it is shown \cite{DSh_00, KurLug} that the (maximal) self-adjoint operator $H_l$ associated with $\tau_l$, $\tau_l:=\tau$ if $q\equiv 0$, can be realized as an $\mathfrak{H}_{-n_l-2}$-perturbation and one of the corresponding Weyl functions $M_l(z)$ is given by \eqref{eq:II.11} below. Also, in this case $M_l\in N^\infty_{\gk_l}$, where $\gk_l=\floor{\frac{l}{2}+\frac{3}{4}}$.
Moreover, the perturbation element $\varphi$ is $\varphi_l:=(\widetilde{H}_l-z)\psi_l(x,z)$, where $\psi_l(x,z)=\frac{\I\pi}{2}\E^{\I\frac{2l+1}{4}\pi}(-z)^{\frac{2l+1}{4}}\sqrt{x}H_{l+\frac{1}{2}}^{(1)}(\I x\sqrt{-z})$, and $\widetilde{H}_l$ is an $[\mathfrak{H}_{-n_l},\mathfrak{H}_{-(n_l+2)}]$ continuation of $H_l$. Here $H_\nu^{(1)}$ denotes the
Hankel function of order $\nu$ of the first kind.

Lemmas~\ref{lem_5.5} and \ref{lem:theta_fr} enable us to extend the above scheme to general Bessel operators with potentials satisfying \eqref{q:hyp}.
Namely, Lemmas~\ref{lem_5.5} and \ref{lem:theta_fr} allow us to conclude that the solution $\psi(z,x)$ defined by \eqref{eq:i_02} and \eqref{eq:i.03} satisfies
\be\label{eq:i.05}
\partial^{\gk_l}_z\psi(z,x)=\gk_l!(\widetilde{H}-z)^{-\gk_l}\psi(z,x)\in L^2(\R_+),\qquad \partial^{\gk_l-1}_z \psi(z,x)\notin L^2(0,1).
\ee
Therefore, setting
\[
\varphi:=(\widetilde{H}-\I)\psi(\I,x)\in \hr_{-2(\gk_l+1)}\setminus \hr_{-2\gk_l},
\]
we can introduce the $Q$-function $\widetilde{M}(z)$ for the operator $H$ via \eqref{eq:c.14}--\eqref{eq:c.15}.
In Section~\ref{sec:m-func} we will then show that the singular Weyl function \eqref{eq:i_02} and the $Q$-function $\widetilde{M}$ are connected by the following relation
\[
M(z)=\widetilde{M}(z)+G(z),
\]
where the function $G(z)$ is entire (Theorem \ref{th:main}).
Moreover, we show that $G(z)$ is a real polynomial of order at most $2\gk_l+1$ if $\theta(z,x)$ is a Frobenius type solution,
that is, $\theta(z,x)$ satisfies condition \eqref{eq:i.04}.

To conclude, we briefly describe the content of the paper. In Section~\ref{sec:exmpl} we consider the unperturbed Bessel operator.
The next section deals with the properties of a fundamental system of solutions, which are entire with respect to the energy parameter.
In particular, we prove a Frobenius type representation for the fundamental solutions. In Section~\ref{sec:m-func} we prove our main
result, Theorem \ref{th:main}.

Appendix \ref{app:hi} contains necessary information on Hardy type inequalities, which we need in Section~\ref{sec:ass_sol}.
We also collect necessary information on generalized Nevanlinna functions and the theory of super singular perturbations in
Appendices~\ref{app:nkappa} and \ref{app:ssp}, respectively.

\section{An example}
\label{sec:exmpl}

We begin our investigations by discussing the prototypical example:
The spherical Schr\"odinger equation given by
\be
H_l = - \frac{d^2}{dx^2} + \frac{l(l+1)}{x^2}, \qquad x\in(0,+\infty), \: \ l \ge -\frac{1}{2},
\ee
with the usual boundary condition at $x=0$ (for $l\in[-\frac{1}{2},\frac{1}{2})$)
\be
\lim_{x\to0} x^l ( (l+1)f(x) - x f'(x))=0.
\ee
Two linearly independent solutions of the underlying differential equation
\be
- u''(x) + \frac{l(l+1)}{x^2} u(x) = z u(x)
\ee
are given by
\be\label{defphil}
\phi_l(z,x) = C_l^{-1}z^{-\frac{2l+1}{4}} \sqrt{\frac{\pi x}{2}} J_{l+\frac{1}{2}}(\sqrt{z} x),\quad C_l:=\frac{\Gamma(l+\frac{3}{2}) 2^{l+1}}{\sqrt{\pi}},
\ee
\be\label{defthetal}
\theta_l(z,x) = -C_lz^{\frac{2l+1}{4}} \sqrt{\frac{\pi x}{2}} \begin{cases}
\frac{-1}{\sin((l+\frac{1}{2})\pi)} J_{-l-\frac{1}{2}}(\sqrt{z} x), & {l+\frac{1}{2}} \in \R_+\setminus \N_0,\\
Y_{l+\frac{1}{2}}(\sqrt{z} x) -\frac{1}{\pi}\log(z) J_{l+\frac{1}{2}}(\sqrt{z} x), & {l+\frac{1}{2}} \in\N_0,\end{cases}
\ee
where $J_{l+\frac{1}{2}}$ and $Y_{l+\frac{1}{2}}$ are the usual Bessel and Neumann functions \cite{as}.
All branch cuts are chosen along the negative real axis unless explicitly stated otherwise.
If $l$ is an integer they of course reduce to spherical Bessel and Neumann functions
and can be expressed in terms of trigonometric functions (cf.\ e.g.\ \cite{as}, \cite[Sect.~10.4]{tschroe})

Using the power series for the Bessel and Neumann functions one verifies that they have the form
\be\label{eq:2.06}
\phi_l(z,x) =   x^{l+1} \sum_{k=0}^\infty\frac{C_{l,k}^\phi x^{2k}}{k!}z^k,\qquad
C_{l,k}^\phi=\frac{(-1)^k}{4^k(l+\frac{3}{2})_k},
\ee
\be\label{eq:2.07}
\theta_l(z,x) =  \begin{cases} \frac{x^{-l}}{2l+1}
 \sum\limits_{k=0}^\infty\frac{C_{l,k}^\theta x^{2k}}{k!}z^k, & l+\frac{1}{2} \in \R_+\setminus \N_0,\\
 \frac{x^{-l}}{2l+1}
\left(\sum\limits_{k=0}^{n_l-1}\frac{C_{l,k}^\theta x^{2k}}{k!}z^k -\frac{\log(x/2)}{4^l(n_l-1)!}\sum\limits_{k=n_l}^\infty\frac{C_{l,k-n_l}^\phi x^{2k}}{k!}z^{k} \right. \\
\left.\qquad \quad+
\sum\limits_{k=n_l}^\infty\frac{C_{l,k} x^{2k}}{k!}z^{k}\right),
& l+\frac{1}{2} \in\N_0,
\end{cases}
\ee
where $n_l=\floor{l+\frac{1}{2}}$ and
\be\label{eq:2.09}
C_{l,k}^\theta=\begin{cases}
\frac{(-1)^k}{4^k(-l+\frac{1}{2})_k}, & l+\frac{1}{2} \notin\N_0,\\
\frac{1}{4^k(l-k+\frac{1}{2})_k}, & l+\frac{1}{2} \in\N_0,
\end{cases}\qquad C_{l,k}=(-1)^k\frac{\psi(k+1)+\psi(k-n_l+1)}{4^{k}k!},
\ee
and $\psi(\cdot)$ is the psi-function \cite[(6.3.2)]{as}.
Here we have used the Pochhammer symbol
\be\label{eq:2.10}
(x)_0=1, \quad (x)_j= x (x+1) \cdots (x+j-1) = \frac{\Gamma(x+j)}{\Gamma(x)}.
\ee
In particular,  both functions are entire with respect to $z$ and according to \cite[(9.1.16)]{as} their Wronskian is given by
\be \label{a0}
W(\theta_l(z),\phi_l(z))=1.
\ee
Moreover, on $(0,\infty)$ and $l\ge -1/2$ we have
\be
\psi_l(z,x)= \theta_l(z,x) + M_l(z) \phi_l(z,x)= \I C_l\sqrt{\frac{\pi x}{2}} (\I \sqrt{-z})^{l+\frac{1}{2}} H_{l+\frac{1}{2}}^{(1)}(\I\sqrt{-z}x)
\ee
with
\be\label{eq:II.11}
M_l(z) = \begin{cases}
\frac{-C_l^2}{\sin((l+\frac{1}{2})\pi)} (-z)^{l+\frac{1}{2}}, & {l+\frac{1}{2}}\in\R_+\setminus \N_0,\\
\frac{-C_l^2}{\pi} z^{l+\frac{1}{2}}\log(-z), & {l+\frac{1}{2}} \in\N_0,\end{cases}\qquad C_l=\frac{\Gamma(l+\frac{3}{2}) 2^{l+1}}{\sqrt{\pi}},
\ee
where all branch cuts are chosen along the negative real axis and $H_{l+1/2}^{(1)}(z) = J_{l+1/2}(z) + \I Y_{l+1/2}(z)$ is the Hankel
functions of the first kind. The associated spectral measure is given by
\be
d\rho_l(\lam) = C_l^2\chi_{[0,\infty)}(\lam) \lam^{l+\frac{1}{2}} \frac{d\lam}{\pi},  \qquad l \geq -\frac{1}{2},
\ee
and the associated spectral transformation is just the usual Hankel transform. Furthermore, one
infers that $M_l(z)$ is in the generalized Nevanlinna class $N_{\kappa_l}^\infty$ with $\kappa_l=\floor{l/2 + 3/4}$.

For more information we refer to Section~4 of \cite{gz}, to \cite{ek}, where
the limit circle case $l\in[-1/2,1/2)$ is considered, and to Section~5 of \cite{fl_09},
where the Coulomb Hamiltonian $H_l - a/ x$ is worked out (see also \cite{DSh_00}, \cite{KurLug}).

\section{Asymptotics of solutions}\label{sec:ass_sol}

\subsection{General results}\label{sec:iii.1}

The main object of the following sections is the perturbed Bessel differential expression \eqref{eq-bes}.
In order to avoid cumbersome case distinctions we will exclude the special case $l=-\frac{1}{2}$ most of the time.
Since the operator is limit circle for $l\in[-\frac{1}{2},\frac{1}{2})$ this case is of no interest to us.

We begin with the following preliminary result (see \cite[Lemma~2.2]{kst}).

\begin{lemma}\label{lemphi}
Abbreviate $\ti{q}(x) = q(x)$ for $l>-\frac{1}{2}$ and $\ti{q}(x) = (1-\log(x))q(x)$ for $l=-\frac{1}{2}$.
Assume that $x \ti{q}(x)\in L^1(0,1)$.
Then there is a solution $\phi(z,x)$ of $\tau u = z u$ which is entire with respect to $z$ and satisfies the integral
equation
\be \label{a1}
\phi(z,x) = \phi_l(z,x) + \int_0^x G_l(z,x,y) q(y) \phi(z,y) dy,
\ee
where
\be \label{a2}
G_l(z,x,y) = \phi_l(z,x) \theta_l(z,y) - \phi_l(z,y) \theta_l(z,x)
\ee
is the Green function of the initial value problem. Moreover, this solution satisfies the estimate
\be\label{estphi}
| \phi(z,x) - \phi_l(z,x)| \leq C \left(\frac{x}{1+ |z|^{1/2} x}\right)^{l+1} \E^{|\im(z^{1/2})| x} \int_0^x \frac{y |\ti{q}(y)|}{1 +|z|^{1/2} y} dy.
\ee
The derivative is given by
\be  \label{a3}
\phi'(z,x) = \phi_l'(z,x) + \int_0^x \frac{\partial}{\partial x}G_l(z,x,y) q(y) \phi(z,y) dy
\ee
and satisfies the estimate
\be\label{estphi'}
| \phi'(z,x) - \phi_l'(z,x)| \leq C  \left(\frac{x}{1+ |z|^{1/2} x}\right)^l \E^{|\im(z^{1/2})| x} \int_0^x \frac{y |\ti{q}(y)|}{1 +|z|^{1/2} y} dy.
\ee
\end{lemma}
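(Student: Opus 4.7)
The plan is to solve the integral equation \eqref{a1} by standard Volterra iteration. Define $\phi^{(0)}(z,x) := \phi_l(z,x)$ and
\be
\phi^{(n)}(z,x) := \int_0^x G_l(z,x,y)\,q(y)\,\phi^{(n-1)}(z,y)\,dy,\qquad n\ge 1,
\ee
and consider the Neumann series $\phi(z,x) := \sum_{n=0}^\infty \phi^{(n)}(z,x)$. Once local uniform convergence in $(z,x) \in \C \times \R_+$ is established, the limit is automatically entire in $z$ (Weierstrass), satisfies \eqref{a1} by construction, and, upon differentiating twice under the integral sign and using that $G_l(z,x,y)$ solves the unperturbed equation in $x$ together with the Wronskian relation \eqref{a0}, is indeed a solution of $\tau u = zu$.

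The key technical ingredient is a pair of uniform estimates on the unperturbed solutions, which can be read off directly from the series \eqref{eq:2.06}--\eqref{eq:2.07} or, equivalently, from standard Bessel-function asymptotics. Concretely one has
\be
|\phi_l(z,x)| \le C\left(\frac{x}{1+|z|^{1/2}x}\right)^{l+1}\E^{|\im(z^{1/2})|x},\qquad |\theta_l(z,x)| \le C\,\om_l(z,x)\left(\frac{x}{1+|z|^{1/2}x}\right)^{-l}\E^{|\im(z^{1/2})|x},
\ee
with $\om_l \equiv 1$ for $l > -\tfrac{1}{2}$ and $\om_{-1/2}(z,x) = 1+|\log(|z|^{1/2}x)|$ in the borderline case. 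Precisely this logarithm is what dictates the substitution $q \mapsto \ti q = (1-\log x)q$ at $l=-\tfrac{1}{2}$, since $\om_{-1/2}(z,y)|q(y)|$ on $(0,1)$ is then dominated by $|\ti q(y)|$ up to a factor growing at most logarithmically in $|z|$ (harmless, as the exponential prefactor absorbs it). Inserting these bounds in the definition \eqref{a2} of $G_l$ and invoking $2l+1 \ge 0$---which, via the monotonicity of $u \mapsto u/(1+|z|^{1/2}u)$, allows one to dominate $|\phi_l(z,y)\theta_l(z,x)|$ by $|\phi_l(z,x)\theta_l(z,y)|$ whenever $y \le x$---one obtains
\be
|G_l(z,x,y)\,\phi_l(z,y)| \le C\left(\frac{x}{1+|z|^{1/2}x}\right)^{l+1}\frac{y\,\om_l(z,y)}{1+|z|^{1/2}y}\,\E^{|\im(z^{1/2})|x},\qquad 0 < y < x.
\ee

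Multiplying by $|q(y)|$ and integrating is precisely the $n=1$ step of the induction and gives \eqref{estphi}. A routine induction then produces
\be
|\phi^{(n)}(z,x)| \le C\left(\frac{x}{1+|z|^{1/2}x}\right)^{l+1}\E^{|\im(z^{1/2})|x}\,\frac{R(z,x)^n}{n!},\qquad R(z,x):=\int_0^x \frac{y\,|\ti q(y)|}{1+|z|^{1/2}y}\,dy,
\ee
and $R(z,x)$ is finite by the hypothesis $x\ti q(x) \in L^1(0,1)$, continuous in $(z,x)$, and tends to $0$ as $x \to 0^+$ uniformly for $z$ in compact sets. Summing over $n$ yields local uniform convergence of the Neumann series, hence entirety of $\phi$ in $z$ and the estimate \eqref{estphi}. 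The corresponding formula \eqref{a3} and estimate \eqref{estphi'} for $\phi'$ follow in parallel by differentiating termwise and using the analogous bounds on $\phi_l'$, $\theta_l'$, which lose one power of $x/(1+|z|^{1/2}x)$ relative to $\phi_l$, $\theta_l$. The main obstacle I expect is the careful bookkeeping of these uniform estimates, especially at $l=-\tfrac{1}{2}$ where the logarithmic singularity of $\theta_{-1/2}$ must be tracked and absorbed into $\ti q$; beyond that it is a textbook Volterra argument.
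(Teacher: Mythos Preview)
Your approach is correct and is exactly the standard Volterra iteration argument by which this result is established. Note, however, that the paper does not actually prove this lemma: it is stated as a preliminary result with a citation to \cite[Lemma~2.2]{kst}, where the proof is carried out. Your outline matches that proof in all essential respects---successive approximations $\phi^{(n)}$, the uniform Bessel-type bounds on $\phi_l$ and $\theta_l$, the resulting Green function estimate, and the $R(z,x)^n/n!$ inductive bound yielding an exponentially convergent series. The one place that deserves more care than you give it is the constant in \eqref{estphi} for $l=-\tfrac12$: your remark that the extra $\log|z|$ factor is ``harmless, as the exponential prefactor absorbs it'' is fine for local uniform convergence and entirety in $z$, but the stated estimate has $C$ independent of $z$. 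The cleaner route is to bound $G_{-1/2}(z,x,y)$ directly (rather than term by term) and observe that for $0<y\le x$ the logarithmic weight can be taken as $1+\big|\log\big(\tfrac{y}{x}\big)\big|$, which for $y\in(0,1)$ is dominated by $C(1-\log y)$ uniformly in $x$ and $z$; this is what produces the factor $\ti q(y)=(1-\log y)q(y)$ without a residual $|z|$-dependence in the constant. With that refinement the argument goes through exactly as you sketch.
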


The next result plays a key role in the study of solutions $\phi(z,x)$ and $\theta(z,x)$.
\begin{lemma}\label{lem_ass}
Let $l>-1/2$ and $xq(x)\in L^p(0,1)$, $p\in[1,\infty]$, or
$l= -\frac{1}{2}$ and either $xq(x)\in L^p(0,1)$, $p\in(1,\infty]$, or $(1-\log(x)) x q(x) \in L^1(0,1)$ and $p=1$.

Then there exist two linearly independent solutions
$\phi(z,x)$ and $\theta(z,x)$ of $\tau u=z u$ such that
\be\label{eq-ass3}
 \phi(z,x)=x^{l+1}\widetilde{\phi}(z,x),\quad
 \theta(z,x)= \begin{cases} \frac{x^{-l}}{2l+1} \widetilde{\theta}(z,x), & l>-\frac{1}{2},\\
-\log(x) x^{1/2} \widetilde{\theta}(z,x), & l=-\frac{1}{2},\end{cases}
\ee
$\widetilde{\phi}(z,0)=\widetilde{\theta}(z,0)=1$, where
\be\label{eq-ass3B}
\widetilde{\phi}(z,.)\in W^{1,p}(0,1), \: p\in[1,\infty],
\quad \widetilde{\theta}(z,.)\in \begin{cases}
W^{1,p}(0,1), \: p\in [1,\infty], & 0< l,\\
W^{1,p}(0,1), \: p\in [1, \frac{-1}{2l}), & \frac{-1}{2} < l \leq 0,\\
C [0,1], & l=-\frac{1}{2}.\end{cases}
\ee
and, moreover, for $l>-1/2$,
\be\label{eq:31.09}
\lim_{x\to 0}x\widetilde{\phi}'(z,x)=\lim_{x\to 0}x\widetilde{\theta}'(z,x)=0,\quad \text{and}\quad \lim_{x\to 0}W_x(\theta(z),\phi(\zeta))= 1.
\ee
The functions $\phi(z,x)$ and $\theta(z,x)$ can be chosen entire with respect to $z$ and $\widetilde{\phi},\widetilde{\theta} \in C(\C\times[0,1])$.
Here $W^{1,p}(0,1)$ denotes the usual Sobolev space consisting of all
absolutely continuous functions whose derivative is in $L^p(0, 1)$.
\end{lemma}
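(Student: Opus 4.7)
My plan is to construct $\phi$ and $\theta$ as solutions of Volterra type integral equations starting from the explicit unperturbed solutions $\phi_l,\theta_l$ of Section~\ref{sec:exmpl}, and then to read off all claimed properties from those integral representations. I take $\phi(z,x)$ directly from Lemma~\ref{lemphi}, set $\tilde\phi(z,x):=x^{-(l+1)}\phi(z,x)$, and use the expansion \eqref{eq:2.06} together with the bound \eqref{estphi} to obtain $\tilde\phi(z,0)=1$ and joint continuity of $\tilde\phi$ on $\C\times[0,1]$. Differentiating \eqref{a1} and combining with \eqref{estphi'}, the cancellation of the leading factor $x^l$ reduces $\tilde\phi'(z,x)$ to a bounded function plus a term of order $xq(x)$; this yields $\tilde\phi'(z,\cdot)\in L^p(0,1)$ and $x\tilde\phi'(z,x)\to 0$ as $x\to 0$.

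For $\theta(z,x)$ I solve the Volterra equation
\[
\theta(z,x)=\theta_l(z,x)+\int_0^x G_l(z,x,y)q(y)\theta(z,y)\,dy
\]
by successive approximation. The key estimate is the pointwise bound
\[
|G_l(z,x,y)\theta_l(z,y)|\le \tilde C(z)\,y\,|\theta_l(z,x)|,\qquad 0<y<x\le 1,
\]
obtained by exploiting the factorization $G_l=\phi_l(x)\theta_l(y)-\phi_l(y)\theta_l(x)$ together with the leading asymptotics $\phi_l\sim x^{l+1}$ and $\theta_l\sim x^{-l}/(2l+1)$; the hypothesis $xq\in L^1(0,1)$ then makes the Neumann series geometrically convergent, uniformly on compact subsets of $\C\times[0,1]$. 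This yields existence, entirety in $z$, joint continuity in $(z,x)$, and linear independence from $\phi$ (the leading powers $x^{l+1}$ and $x^{-l}$ differ). Setting $\tilde\theta(z,x):=(2l+1)x^l\theta(z,x)$, substituting into the integral equation and estimating the two halves of the kernel separately gives $\tilde\theta(z,x)=1+o(1)$, hence $\tilde\theta(z,0)=1$.

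The regularity statement \eqref{eq-ass3B} comes from differentiating the integral equation and carefully canceling the $x^{l-1}$ and $x^{-l-1}$ singularities that already appear in $\theta_l'$: what remains is a continuous term plus a multiple of $x^l\cdot xq(x)$, which lies in $L^p(0,1)$ for $p\in[1,\infty]$ when $l\ge 0$ and only for $p\in[1,-1/(2l))$ when $-\tfrac12<l\le 0$, the latter via H\"older's inequality and the Hardy-type estimates of Appendix~\ref{app:hi}. The limits in \eqref{eq:31.09} follow directly from this representation, while $\lim_{x\to 0}W_x(\theta(z),\phi(\zeta))=1$ is obtained by inserting the asymptotics of $\phi,\theta$ and their derivatives into the Wronskian: the leading singular contributions cancel and the finite limit evaluates to $\frac{l+1}{2l+1}+\frac{l}{2l+1}=1$. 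The case $l=-\tfrac12$ is treated by the same scheme with $\theta_l\sim-\sqrt{x}\log x$, the reinforced hypothesis $(1-\log x)xq\in L^1(0,1)$ absorbing the additional logarithm at every step.

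I expect the \emph{main obstacle} to be the Volterra iteration for $\theta$: because $\theta_l$ is singular at the left endpoint, the convergence and joint continuity on $\C\times[0,1]$ require the cancellation inside $G_l(z,x,y)\theta_l(z,y)$ to be carried out uniformly in $z$ on compact sets, and the regularity claim in the borderline range $-\tfrac12<l\le 0$ requires one to identify precisely which power of $x$ multiplies $xq(x)$ after all cancellations.
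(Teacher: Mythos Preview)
Your approach to $\phi$ via Lemma~\ref{lemphi} is viable, though the description ``a bounded function plus a term of order $xq(x)$'' is imprecise: after writing $\tilde\phi' - \tilde\phi_l'$ using \eqref{a1}--\eqref{a3} the remainder is not $xq(x)$ itself but rather a bounded multiple of $(\mathcal{K}_{2l+1}h)(x)$ with $h(y)=yq(y)\cdot(\text{bounded})$, and it is the Hardy bound \eqref{hi_01a} for $\mathcal{K}_{2l+1}$ (valid on $L^p$ for all $p\in[1,\infty]$ once $l>-1/2$) that gives $\tilde\phi'\in L^p$. This is a fixable imprecision.

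The construction of $\theta$, however, has a genuine gap. The Volterra equation
\[
\theta(z,x)=\theta_l(z,x)+\int_0^x G_l(z,x,y)q(y)\theta(z,y)\,dy
\]
is in general \emph{ill-defined} under the hypothesis $xq\in L^1(0,1)$ once $l\ge 0$. Already at $z=0$ one has $G_l(0,x,y)=\frac{1}{2l+1}(x^{l+1}y^{-l}-y^{l+1}x^{-l})$ and $\theta_l(0,y)=\frac{y^{-l}}{2l+1}$, so the integrand contains the term $\frac{x^{l+1}}{(2l+1)^2}y^{-2l}q(y)$, and $\int_0^x y^{-2l}q(y)\,dy$ need not converge (e.g.\ for $l>0$ take any $q$ with $xq\in L^1$ but $x^{-2l+1}\cdot xq\notin L^1$; for $l=0$ take $q(y)=(y|\log y|)^{-1}$ near $0$). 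Correspondingly, your ``key estimate'' $|G_l(z,x,y)\theta_l(z,y)|\le \tilde C(z)\,y\,|\theta_l(z,x)|$ is false: at $z=0$ the ratio equals $\frac{1}{2l+1}\big((x/y)^{2l+1}-1\big)$, which blows up as $y\to 0$. So neither the first Picard iterate nor the claimed domination is available, and the successive-approximation scheme does not get off the ground.

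The paper avoids this difficulty by a different route: it constructs $\phi$ via a Riccati ansatz $\phi(x)=x^{l+1}\exp\!\int_0^x w$ (solving an integral equation for an auxiliary function $c$ by contraction, which directly yields $w\in L^p$), and then obtains $\theta$ by \emph{reduction of order},
\[
\hat\theta(x)=\phi(x)\int_x^c \frac{dy}{\phi(y)^2}
= x^{-l}\tilde\phi(x)\,(\hat{\mathcal{K}}_{2l+1}\tilde\phi^{-2})(x),
\]
so that the integration runs \emph{away} from the singularity and the $W^{1,p}$ property of $\tilde\theta$ follows from the Hardy-type bounds \eqref{hi_02B}, \eqref{hi_lim2}. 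If you want to salvage a Volterra-type argument for $\theta$, you must integrate from a regular point $a>0$ towards $0$ (and then separately extract the $x^{-l}$ asymptotics), or else pass to an equation for $\tilde\theta$ in which the singular prefactor has been removed; either way the analysis is substantially different from what you sketched.
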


Note: The restriction on $p$ in \eqref{eq-ass3B} in the case $\frac{-1}{2} < l \leq 0$ should be understood as $\widetilde{\theta}(z,.) \in W^{1,\ti{p}}$ for any
$\ti{p} \le \min(p,\frac{-1}{2l})$ since $L^{\ti{p}}(0,1) \subset L^p(0,1)$.

\begin{proof}
Without loss of generality we assume $z=0$ and we abbreviate $\hat{q}(x)=x q(x) \in L^p$.
We begin by making the ansatz
\[
\phi(x) =  x^{l+1} \E^{\int_0^x w(y) dy}
\]
such that $\phi$ solves $\tau \phi =0$ if and only if $w$ solves the Riccati equation
\[
w'(x) + w(x)^2 + \frac{2(l+1)}{x} w(x) = q(x).
\]
Now introduce (cf.\ Appendix~\ref{app:hi})
\[
(\mathcal{K}_\ell f)(x): = x^{-\ell-1} \int_0^x y^\ell f(y) dy
\]
and write
\[
w(x) = c(x) (\mathcal{K}_{2l+1}(c^{-1} \hat{q}))(x)
\]
for some continuous positive function $c$ to be determined. Then $w$ will satisfy our Riccati equation
if $c$ solves the integral equation
\[
c(x) = 1 - \int_0^x c(t)^2 (\mathcal{K}_{2l+1}(c^{-1} \hat{q}))(y) dy=:(Ac)(x).
\]
For $l> -1/2$, \eqref{hi_01a} implies
\[
Q(x) := \mathcal{K}_{2l+1}(|\hat{q}|)(x) \in L^p(0,a) \subset L^1(0,a)
\]
and we can choose $a$ so small that $L = 15\int_0^a Q(y) dy <1$.
Then, if we consider the ball $B_{1/2}(1)$ of radius $1/2$ around the constant function $1$
in $C[0,a]$ we obtain
\[
\|Af-1\|_\infty \le \int_0^a\|f\|^2_\infty \|f^{-1}\|_\infty Q(y)dy \le \int_0^a \frac{9}{4} 2 Q(y) dy < \frac{1}{2}, \quad f\in B_{1/2}(1).
\]
Similarly,
\begin{align*}
& \|Af-Ag\|_\infty=\Big\| \int_0^x \big( f(y)^2 (\mathcal{K}_{2l+1}(f^{-1} \hat{q}))(y) -g(y)^2 (\mathcal{K}_{2l+1}(g^{-1} \hat{q}))(y) \Big) dy \Big\|_\infty \leq\\
& \qquad \int_0^a \Big( 3 \|f-g\|_\infty 2 Q(y) + \frac{9}{4} 4\|f-g\|_\infty Q(y) \Big) dy \leq L \|f-g\|_\infty
\end{align*}
and thus we get existence of a solution $c\in B_{1/2}(1)$ by the contraction principle.
In summary, $w\in L^p(0,a)$ and $\phi(x) = x^{l+1} \ti{\phi}(x)$ with
\[
\ti{\phi}(x)= \E^{\int_0^x w(y) dy} \in W^{1,p}(0,a)
\]
as desired. To see that $\ti{\phi}'(x)=o(x^{-1})$, observe that $\ti{\phi}_l(z,x)$ has this property (cf.\ \eqref{eq:2.06}) and then use the estimate \eqref{estphi'}.
The case $l=-1/2$ is similar using \eqref{hi_log} instead of \eqref{hi_01a} in the case p=1.

A second solution of the required type follows from
\[
\hat{\theta}(x) = \phi(x) \int_x^c\frac{dy}{\phi^2(y)} = x^{-l} \ti{\phi}(x) (\hat{\mathcal{K}}_{2l+1}(\ti{\phi}^{-2}))(x),\quad
\hat{\mathcal{K}}_\ell(f):=x^\ell \int_x^c y^{-\ell-1} f(y) dy,
\]
by virtue of \eqref{hi_02B} and \eqref{hi_lim2}.

To see that $\phi(z,x)$ and $\theta(z,x)$ can be chosen entire, we note that $\phi(z,x)$ coincides with the entire solution from Lemma~\ref{lemphi}
up to a constant. Moreover, by \cite[Lemma~8.3]{kst2} there is a second entire solution $\theta(z,x) = \alpha(z) \hat{\theta}(z,x) + \beta(z) \phi(z,x)$. Since
$1 = W(\theta(z),\phi(z)) =  \alpha(z) W(\hat{\theta}(z),\phi(z)) = \alpha(z)$, we see that $\theta(z,x) =\hat{\theta}(z,x) + \beta(z) \phi(z)$
and since $\hat{\theta}(z,x) + \beta(z) \phi(z,x)$ has the same asymptotic properties near $x=0$, we are done.
\end{proof}

\begin{remark}{\ }
\begin{itemize}
\item
Clearly we have $\ti{\phi}(z,.), \ti{\theta}(z,.)\in AC_{\mathrm{loc}}^2(0,1)$ (see also Corollary \ref{cor_ass} below).
\item
The Coulomb case $q(x)= x^{-1}$ shows that for $l=0$ and $p=\infty$ the derivative of the solution $\theta(0,x)$ can have
a logarithmic singularity and thus is not bounded in general.
\item
The result shows that any operator associated with \eqref{eq-bes} and defined on $L^2(0,1)$ is nonoscillatory and thus is bounded from
below with purely discrete spectrum (cf.\ \cite[Thm.~2.4]{kst}).
\end{itemize}
\end{remark}

\begin{corollary}\label{cor_ass}
Let $l>-1/2$ and $xq(x)\in L^p(0,1)$, $p\in[1,\infty]$. The derivatives of the solutions from the previous lemma
have the form
\be\label{eq-ass3C}
 \phi'(z,x)=x^l\widehat{\phi}(z,x),\quad
 \theta'(z,x)= \frac{x^{-l-1}}{2l+1} \widehat{\theta}(z,x),\quad
 \widehat{\phi}(z,0)= l+1,\: \widehat{\theta}(z,0)=-l,
\ee
where
\be\label{eq-ass3D}
\widehat{\phi}(z,.)\in W^{1,p}(0,1), \: p\in[1,\infty],
\quad \widehat{\theta}(z,.)\in W^{1,p}(0,1), \: p\in \begin{cases}
[1,\infty], & 0< l,\\
[1, \frac{-1}{2l}), & \frac{-1}{2} < l \leq 0.\end{cases}
\ee
\end{corollary}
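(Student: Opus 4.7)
The plan is to derive the stated forms directly by differentiating the representations given in Lemma~\ref{lem_ass}, then to read off the Sobolev regularity of $\widehat{\phi}$ and $\widehat{\theta}$ by using the differential equation $\tau u = z u$ to control the second derivatives of $\widetilde{\phi}$ and $\widetilde{\theta}$.

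First I would simply differentiate $\phi(z,x)=x^{l+1}\widetilde{\phi}(z,x)$ and $\theta(z,x)=\tfrac{x^{-l}}{2l+1}\widetilde{\theta}(z,x)$, which gives the explicit identifications
\be\nn
\widehat{\phi}(z,x)=(l+1)\widetilde{\phi}(z,x)+x\widetilde{\phi}'(z,x),\qquad
\widehat{\theta}(z,x)=-l\,\widetilde{\theta}(z,x)+x\widetilde{\theta}'(z,x).
\ee
The boundary values $\widehat{\phi}(z,0)=l+1$ and $\widehat{\theta}(z,0)=-l$ are then immediate from the normalizations $\widetilde{\phi}(z,0)=\widetilde{\theta}(z,0)=1$ together with the vanishing limits $x\widetilde{\phi}'(z,x)\to 0$ and $x\widetilde{\theta}'(z,x)\to 0$ recorded in \eqref{eq:31.09}.

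Next I would establish the $W^{1,p}$ regularity. The key is to observe that $\widehat{\phi}$ and $\widehat{\theta}$ are already in $L^p$ on the indicated ranges, since $\widetilde{\phi}, \widetilde{\theta}$ are continuous on $[0,1]$ and $\widetilde{\phi}', \widetilde{\theta}' \in L^p$ by Lemma~\ref{lem_ass}; multiplying by the bounded factor $x$ does not spoil this. The work is therefore in showing $\widehat{\phi}', \widehat{\theta}' \in L^p$, for which I would plug $\phi$ and $\theta$ into $\tau u = z u$ and solve algebraically for $x\widetilde{\phi}''$ and $x\widetilde{\theta}''$. A direct computation (the $x^{-2}$ terms cancel against the Frobenius indices) yields
\be\nn
x\widetilde{\phi}''(z,x)=-2(l+1)\widetilde{\phi}'(z,x)+\bigl(q(x)-z\bigr)x\,\widetilde{\phi}(z,x),
\ee
\be\nn
x\widetilde{\theta}''(z,x)=2l\,\widetilde{\theta}'(z,x)+\bigl(q(x)-z\bigr)x\,\widetilde{\theta}(z,x).
\ee
Since $xq\in L^p(0,1)$ by hypothesis and $\widetilde{\phi},\widetilde{\theta}$ are bounded on $[0,1]$, the right-hand sides lie in $L^p$ on precisely the admissible ranges of $p$ from Lemma~\ref{lem_ass}. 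Differentiating the identifications for $\widehat{\phi}, \widehat{\theta}$ gives
\be\nn
\widehat{\phi}'(z,x)=(l+2)\widetilde{\phi}'(z,x)+x\widetilde{\phi}''(z,x),\qquad
\widehat{\theta}'(z,x)=(1-l)\widetilde{\theta}'(z,x)+x\widetilde{\theta}''(z,x),
\ee
and both expressions are therefore in $L^p$ on the required ranges, completing the proof.

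The main (minor) obstacle is purely technical: matching the range of admissible $p$ for $\widehat{\theta}$ to that of $\widetilde{\theta}$, which requires checking that the term $2l\,\widetilde{\theta}'$ does not force a stricter condition than the one already imposed by Lemma~\ref{lem_ass}. Since this term inherits precisely the integrability of $\widetilde{\theta}'$, no new restriction appears, and the final $W^{1,p}$ range stated in \eqref{eq-ass3D} is exactly the one coming from Lemma~\ref{lem_ass}. No additional continuity or entirety claims are required here, as these follow automatically from the entire dependence of $\widetilde{\phi}, \widetilde{\theta}$ on $z$.
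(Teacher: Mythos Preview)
Your proof is correct and follows essentially the same approach as the paper: both differentiate the representations from Lemma~\ref{lem_ass} to obtain $\widehat{\phi}=(l+1)\widetilde{\phi}+x\widetilde{\phi}'$ (and similarly for $\widehat{\theta}$), then use the differential equation to express $x\widetilde{\phi}''$ and conclude $\widehat{\phi}'\in L^p$. The paper simplifies one step further to $\widehat{\phi}'=-l\,\widetilde{\phi}'+x(q-z)\widetilde{\phi}$, but this is only cosmetic.
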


\begin{proof}
This follows from the previous lemma using $\widehat{\phi}(z,x)= (l+1)\widetilde{\phi}(z,x) + x \widetilde{\phi}'(z,x)$
together with the differential equation
\[
x \widetilde{\phi}''(z,x) = -2(l+1) \widetilde{\phi}'(z,x) + x (q(x) - z) \widetilde{\phi}(z,x)
\]
which implies $\widehat{\phi}'(z,x)= -l \ti{\phi}'(z,x) + x (q(x)-z) \ti{\phi}(z,x) \in L^p(0,1)$.

The calculation for $\theta$ is similar.
\end{proof}

\begin{remark}
Let us note that existence of a fundamental system of solutions satisfying \eqref{eq-ass3} and \eqref{eq-ass3C} was first established by B\^{o}cher \cite{boc}, see also \cite{lv}.
\end{remark}

\subsection{Series representation of $\phi(z,x)$}\label{sec:iii.2}

Lemma~\ref{lem_ass} provides the asymptotics of solutions at a singular endpoint $x=0$.
However, this information is insufficient for our needs. The main aim of this and the following subsections
is to prove Frobenius type representations for the entire solutions $\phi(z,x)$ and $\theta(z,x)$.
Throughout this section it will be convenient to abbreviate
\be
I_l= \begin{cases}
            [1,\infty],& 0<l,\\
            [1,\frac{-1}{2l}),&\frac{-1}{2}<l\le 0.
            \end{cases}
\ee

\begin{lemma}\label{lem_5.5}
Assume that $l> -1/2$ and $x q(x)\in L^p(0,1)$ for some $p\in I_l$.
Then the solution $\phi(z,x)$ admits the representation
\begin{align}\label{eq_5.14}
\phi(z,x) &= x^{l+1} \sum_{k=0}^\infty \frac{x^{2k} \widetilde{\phi}_k(z_0,x)}{k!}(z-z_0)^k,\\
\phi'(z,x) &= x^l \sum_{k=0}^\infty \frac{x^{2k} \widehat{\phi}_k(z_0,x)}{k!}(z-z_0)^k,
\end{align}
where
\be\label{eq_5.14C}
\widetilde{\phi}_k(z_0,.),\quad \widehat{\phi}_k(z_0,.)\in W^{1,p}(0,1),
\ee
with
\be
\widetilde{\phi}_k(z_0,0)=C^\phi_{l,k}, \quad
\widehat{\phi}_k(z_0,0)= (l+1+2k) C^\phi_{l,k},\quad C^\phi_{l,k}= \frac{(-1)^k}{4^k (l+3/2)_k},
\ee
and $(x)_j$ the Pochhammer symbol.

Moreover, for any $z_0\in\C$ and $k\in\N_0$
\be\label{eq_5.14D}
\lim_{x\to 0}x\widetilde{\phi}_k'(z_0,x)=0.
\ee
\end{lemma}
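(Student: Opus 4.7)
The plan is an induction on $k$, exploiting the fact that $\phi(z,x)$ is entire in $z$ (by Lemma~\ref{lemphi}) so admits a Taylor expansion at any $z_0\in\C$, and then analyzing the regularity of each Taylor coefficient $\phi_k(z_0,x):=\partial_z^k\phi(z,x)|_{z=z_0}$.

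The bound \eqref{estphi}, together with Cauchy's integral formula, ensures that the $\phi_k(z_0,\cdot)$ are well-defined and that the Taylor series converges absolutely and locally uniformly in $x\in[0,1]$. The claim then reduces to proving $\phi_k(z_0,x)=x^{l+1+2k}\widetilde{\phi}_k(z_0,x)$ with the stated properties of $\widetilde{\phi}_k$ (with the analogous statement for $\phi'_k$ following from \eqref{estphi'}). Differentiating the integral equation \eqref{a1} $k$ times in $z$ and using the Leibniz rule produces a Volterra integral equation for $\phi_k$ whose inhomogeneity involves only $\phi_j$ for $j<k$ together with the $z$-derivatives of the explicit Bessel solution $\phi_l$, which are made explicit by the power series \eqref{eq:2.06}.

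For the base case $k=0$ the required decomposition with $\widetilde{\phi}_0(z_0,0)=1=C^\phi_{l,0}$ is Lemma~\ref{lem_ass}, and the limit $\lim_{x\to 0}x\widetilde{\phi}_0'(z_0,x)=0$ is part of \eqref{eq:31.09}. For the inductive step, the inductive hypothesis $\phi_j(z_0,y)=y^{l+1+2j}\widetilde{\phi}_j(z_0,y)$ for $j<k$, combined with the explicit asymptotics $\phi_l(z_0,x)\sim x^{l+1}$ and $\theta_l(z_0,x)\sim x^{-l}/(2l+1)$ from Section~\ref{sec:exmpl}, shows that the inhomogeneous term of the Volterra equation for $\phi_k$ factors as $x^{l+1+2k}$ times a function of the required regularity. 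The equation itself has kernel $G_l(z_0,x,y)q(y)$, identical to the one in Lemma~\ref{lemphi}, and is solved by the same contraction argument in a weighted space that preserves the prefactor $x^{l+1+2k}$. The leading-order boundary value $\widetilde{\phi}_k(z_0,0)=C^\phi_{l,k}$ is then read off by a direct asymptotic computation of the integrals as $x\to 0$, which reduces to the Pochhammer recursion $C^\phi_{l,k}=-C^\phi_{l,k-1}/[2(2l+2k+1)]$; the $q$-dependent corrections are easily checked to be $o(x^{l+1+2k})$ under the assumption $xq(x)\in L^p(0,1)$. The $W^{1,p}$-regularity and the identity $\lim_{x\to 0}x\widetilde{\phi}_k'(z_0,x)=0$ then follow by differentiating in $x$ and invoking the Hardy-type inequalities from Appendix~\ref{app:hi}, in direct parallel with the proofs of Lemma~\ref{lem_ass} and Corollary~\ref{cor_ass}. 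The representation of $\phi'(z,x)$ and the properties of $\widehat{\phi}_k(z_0,\cdot)$ are obtained by applying the same strategy to \eqref{a3}, using Corollary~\ref{cor_ass} as the base case.

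The main obstacle will be the careful bookkeeping required in the inductive step: one must track precisely which powers of $x$ and $y$ survive at leading order, both in the $z$-derivatives of $G_l$ and after integration against $\phi_j$, to ensure that the perturbation contributions are genuinely $o(x^{l+1+2k})$ and that the leading constant reproduces $C^\phi_{l,k}$ exactly rather than a corrupted version. This is a more intricate version of the asymptotic analysis already carried out in the proof of Lemma~\ref{lem_ass}.
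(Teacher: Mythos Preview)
Your approach is correct but takes a different route from the paper's. The paper does not differentiate the Volterra equation \eqref{a1} with the \emph{free} Green function $G_l$; instead it observes that $(\tau-z_0)\phi^{(k+1)}(z_0,\cdot)=(k+1)\phi^{(k)}(z_0,\cdot)$ and proves a separate lemma (Lemma~\ref{lem_5.4B}) showing that if $g_k=x^{l+1+2k}\ti g_k$ with $\ti g_k\in W^{1,p}$, then the solution of $(\tau-z)f_k=g_k$ with $\lim_{x\to0}x^{-(l+1)}f_k=0$ has the form $f_k=-\tfrac{x^{l+3+2k}\ti f_k}{4(k+1)(l+k+3/2)}$ with $\ti f_k(0)=\ti g_k(0)$. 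That lemma is proved using the \emph{perturbed} Green function built from $\phi(z,\cdot)$ and $\theta(z,\cdot)$ of Lemma~\ref{lem_ass}, so the potential $q$ is absorbed once and for all into the already-established regularity of $\widetilde\phi,\widetilde\theta$; the Pochhammer recursion then drops out of a single evaluation via \eqref{hi_01contin} without any $q$-dependent correction terms to track. Your approach, by contrast, keeps $q$ explicit in the kernel and must verify at each step that the $q$-corrections are $o(x^{l+1+2k})$ and preserve $W^{1,p}$ --- which they do, via the computation $x\,\mathcal K_{2(k-i)}[(yq)\cdot\text{bounded}]\in W^{1,p}$ --- but this is exactly the bookkeeping you flag as the main obstacle. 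The paper's route is more modular (Lemma~\ref{lem_5.4B} is reused verbatim in the analysis of $\theta$), while yours stays closer to the original integral equation and avoids introducing an auxiliary lemma; both are valid, and your identification of the leading constant directly from the explicit series \eqref{eq:2.06} for $\phi_l^{(k)}$ is in fact slightly more direct than the paper's recursive computation.
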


The proof of this lemma is based on the following result.

\begin{lemma}\label{lem_5.4B}
Let $l>- 1/2$ and $xq(x)\in L^p(0,1)$ for some $p\in I_l$.
Assume that $g_k(x) = x^{l+1+2k} \ti{g}_k(x)$ with $k>-1$ and
$\ti{g}_k \in W^{1,p}(0,1)$ (with $k>-\frac{1}{2}$ and $\ti{g}_k\in L^{\ti{p}}(0,1)$, $\ti{p}\in[1,\infty]$).
Then the solution of the following inhomogeneous problem
\[
(\tau -z) f_k= g_k,\qquad \lim_{x\to 0} x^{-(l+1)} f_k(x)=0,
\]
is given by
\be\label{5.4B}
f_k(x)=-\frac{x^{l+1+2(k+1)} \ti{f}_k(x)}{4(k+1)(l+k+3/2)},\qquad \ti{f}_k\in W^{1,p}(0,1)\: \big(\ti{f}_k \in L^{\ti{p}}(0,1)\big),
\ee
where $\ti{f}_k(0)= \ti{g}_k(0)$. Moreover,
\be\label{5.4C}
f_k'(x)=-\frac{x^{l+2(k+1)} \hat{f}_k(x)}{4(k+1)(l+k+3/2)},\qquad \hat{f}_k\in W^{1,p}(0,1)\: \big(\hat{f}_k \in L^{\ti{p}}(0,1)\big),
\ee
where $\hat{f}_k(0)= (l+1+2(k+1)) \ti{g}_k(0)$ and $\lim_{x\to 0} x \ti{f}_k'(x)=0$.
If, additionally, $\lim_{x\to 0} x \ti{g}_k'(x)=0$, then $\lim_{x\to 0} x \hat{f}_k'(x)=0$.
\end{lemma}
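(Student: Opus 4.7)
My approach is variation of parameters built on the entire fundamental system $\phi(z,\cdot),\theta(z,\cdot)$ of Lemma \ref{lem_ass}, whose Wronskian equals one. This furnishes the particular solution
\[
f_k(x) = \theta(z,x)\int_0^x \phi(z,y)\, g_k(y)\, dy - \phi(z,x)\int_0^x \theta(z,y)\, g_k(y)\, dy
\]
of $(\tau-z)f_k=g_k$. Because the construction below will produce $f_k(x)=O(x^{l+3+2k})$ and $k>-1$, the boundary condition $\lim_{x\to 0}x^{-(l+1)}f_k(x)=0$ is automatic, so $f_k$ is uniquely determined by this prescription.

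The key idea is to factor the leading power of $x$ out of this expression explicitly. Inserting the factorisations of $\phi,\theta$ from Lemma \ref{lem_ass} and rewriting each integral through the Hardy-type average $(\mathcal{K}_\ell h)(x):=x^{-\ell-1}\int_0^x y^\ell h(y)\,dy$ from Appendix \ref{app:hi}, one gets
\[
f_k(x) = \frac{x^{l+3+2k}}{2l+1}\Big[\widetilde\theta(z,x)\,\mathcal{K}_{2l+2+2k}(\widetilde\phi\widetilde g_k)(x) - \widetilde\phi(z,x)\,\mathcal{K}_{1+2k}(\widetilde\theta\widetilde g_k)(x)\Big].
\]
Matching this with the target form \eqref{5.4B} via $(\mathcal{K}_\ell h)(0)=h(0)/(\ell+1)$ and $\widetilde\phi(z,0)=\widetilde\theta(z,0)=1$ reproduces the prefactor $-1/(4(k+1)(l+k+3/2))$ after a short algebraic simplification of the bracket at $x=0$ and pins down $\widetilde f_k(0)=\widetilde g_k(0)$.

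The regularity claim $\widetilde f_k\in W^{1,p}(0,1)$ (respectively $L^{\widetilde p}(0,1)$) rests on two ingredients: the commutation identity $(\mathcal{K}_\ell h)'=\mathcal{K}_{\ell+1}h'$, which is immediate from the dilation form $\mathcal{K}_\ell h(x)=\int_0^1 s^\ell h(sx)\,ds$; and the Hardy-type $L^p$ (resp.\ $L^{\widetilde p}$) bounds for $\mathcal{K}_\ell$ of Appendix \ref{app:hi}, available for all indices arising above precisely under the threshold assumptions $k>-1$ (resp.\ $k>-1/2$). Since $W^{1,p}(0,1)$ is a Banach algebra (it embeds into $C[0,1]$), multiplying the $\mathcal{K}_\ell$-terms by $\widetilde\phi,\widetilde\theta\in W^{1,p}$ preserves the class and yields $\widetilde f_k\in W^{1,p}(0,1)$; the $L^{\widetilde p}$ case is the same argument with $\widetilde\phi,\widetilde\theta\in C[0,1]$ acting as multipliers. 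This is the technical bottleneck of the proof: checking that every index actually arising meets the hypothesis of Appendix \ref{app:hi} is what the threshold assumptions on $k$ are designed to ensure.

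The derivative formula \eqref{5.4C} follows in parallel. Differentiating the variation-of-parameters formula, the two boundary terms $\theta\phi g_k$ and $\phi\theta g_k$ cancel identically, and substituting $\phi'=x^l\widehat\phi$, $\theta'=\frac{x^{-l-1}}{2l+1}\widehat\theta$ from Corollary \ref{cor_ass} into the remaining integrals reproduces the same factorisation but with $\widehat\phi,\widehat\theta$ replacing $\widetilde\phi,\widetilde\theta$ as outer multipliers. The value $\widehat f_k(0)=(l+1+2(k+1))\widetilde g_k(0)$ is then a short algebraic check using $\widehat\phi(z,0)=l+1$, $\widehat\theta(z,0)=-l$ together with the identity $l(k+1)+(l+1)(l+k+3/2)=\tfrac{1}{2}(l+3+2k)(2l+1)$. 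Finally, the limits $\lim_{x\to 0}x\widetilde f_k'(x)=0$ and, under the extra hypothesis $\lim_{x\to 0}x\widetilde g_k'(x)=0$, also $\lim_{x\to 0}x\widehat f_k'(x)=0$, are read off from \eqref{eq:31.09} for $\widetilde\phi,\widetilde\theta$ combined with $x(\mathcal{K}_\ell h)'(x)=-(\ell+1)(\mathcal{K}_\ell h)(x)+h(x)$, which has a finite limit at the origin whenever $h$ does.
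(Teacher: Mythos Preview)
Your proof is correct and follows the same route as the paper's: variation of parameters using the fundamental system from Lemma~\ref{lem_ass}, factoring out the leading power of $x$ to express the particular solution through the Hardy averages $\mathcal{K}_\ell$, and then invoking the $L^p$/$W^{1,p}$ boundedness of $\mathcal{K}_\ell$ from Appendix~\ref{app:hi} (Lemmas~\ref{lem:Kl} and~\ref{lem:KlW}) together with \eqref{hi_01contin} and \eqref{hi_lim1} for the values at $0$ and the limit statements. The only cosmetic difference is that the paper deduces $\lim_{x\to 0}x\widetilde f_k'(x)=0$ from the algebraic relation $\widehat f_k=(l+1+2(k+1))\widetilde f_k+x\widetilde f_k'$, whereas you compute it directly via $x(\mathcal{K}_\ell h)'=-(\ell+1)\mathcal{K}_\ell h+h$; both are equivalent.
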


\begin{proof}
Observe that $f_k$ admits the representation
\[
f_k(x)=c_1\theta(z,x)+c_2\phi(z,x)+(\widetilde{G}_z g_k)(x),
\]
where
\begin{align}\nn
(\widetilde{G}_z g_k)(x) &= \theta(z,x)\int_0^x g_k(y)\phi(z,y) dy-\phi(z,x)\int_0^x g_k(y)\theta(z,y) dy\\  \label{eq_G}
&=\frac{x^{l+1+2(k+1)}}{(2l+1)}\left(\widetilde{\theta}(z,x)(\mathcal{K}_{2(l+k+1)}(\ti{g}_k \widetilde{\phi}))(x) - \widetilde{\phi}(z,x)(\mathcal{K}_{2k+1}(\ti{g}_k \widetilde{\theta}))(x)\right).
\end{align}
Since $\ti{g}_k, \widetilde{\phi}, \widetilde{\theta}\in W^{1,p}(0,1)$ we get $x^{-l-1-2(k+1)} (\widetilde{G}_z g_k)(x)\in W^{1,p}(0,1)$ by Lemma~\ref{lem:KlW}.
Similarly, if $\ti{g}_k\in L^p(0,1)$, then Lemma~\ref{lem:Kl} yields $x^{-l-1-2(k+1)} (\widetilde{G}_z g_k)(x)\in L^p(0,1)$. Moreover, the condition
$\lim_{x\to 0}x^{-(l+1)}f(x)=0$ implies $c_1=c_2=0$, that is, $f_k= \widetilde{G}_z g_k$.

Next, by \eqref{hi_01contin} we find
\begin{align*}
\ti{f}_k(0)= \lim_{x\to 0}x^{-(l+2k+3)}\widetilde{G}_z(g_k) =
\frac{1}{2l+1}\left(\frac{\ti{g}_k(0)\widetilde{\phi}(z,0)}{2l+2k+3}- \frac{\ti{g}_k(0) \widetilde{\theta}(z,0)}{2k+2}\right)\\
=-\frac{\ti{g}_k(0)}{4(k+1)(l+k+3/2)}.
\end{align*}
The claim about the derivatives follows using Corollary~\ref{cor_ass} and
\[
(\widetilde{G}_z g_k)'(x) = \frac{x^{l+2(k+1)}}{(2l+1)}\left(\widehat{\theta}(z,x)(\mathcal{K}_{2(l+k+1)}(\ti{g}_k \widetilde{\phi}))(x) -\widehat{\phi}(z,x)(\mathcal{K}_{2k+1}(\ti{g}_k \widetilde{\theta}))(x)\right).
\]
Finally, $\hat{f}_k(x) = (l+1+2(k+1)) \ti{f}_k(x) + x \ti{f}_k'(x)$ implies $\lim_{x\to 0} x \ti{f}_k'(x) =0$ and
if $\lim_{x\to 0} x \ti{g}_k'(x)=0$, then \eqref{hi_lim1} implies $\lim_{x\to 0} x \hat{f}_k'(x)=0$.
This completes the proof.
\end{proof}

\begin{proof}[Proof of Lemma~\ref{lem_5.5}]
Since $\phi(z,x)$ is entire in $z$, we get
\[
\phi(z,x)=\sum_{k=0}^\infty\frac{\phi^{(k)}(z_0,x)}{k!}(z-z_0)^k,\qquad \phi^{(k)}(z_0,x)=\frac{\partial^k}{\partial z^k}\phi(z,x)\Big|_{z=z_0}.
\]
Further, observe that $\phi^{(0)}(z_0,x)=\phi(z_0,x)$ and the derivative $\phi^{(k)}(z,x)$, $k\in \N_0$, satisfies the following equation
\[
(\tau - z) \phi^{(k+1)}(z,x) = (k+1) \phi^{(k)}(z,x).
\]
Moreover, by Lemma~\ref{lem_ass}, the solution $\phi(z,x)$ admits the representation
\[
\phi(z,x)=x^{l+1}\widetilde{\phi}(z,x),\quad \widetilde{\phi}(z,.)\in W^{1,p}(0,1), \quad \widetilde{\phi}\in C\big(\C,[0,1]\big).
\]
Due to the Cauchy integral formula, $\partial^k_z\widetilde{\phi} \in C\big(\C,[0,1]\big)$ and by $\widetilde{\phi}(z,0)\equiv 1$
we conclude that $\partial^k_z\widetilde{\phi}(z,0)= 0$, that is,
\[
\lim_{x\to 0} x^{-(l+1)}\phi_k(z_0,x)=0.
\]
Using Lemma~\ref{lem_5.4B} we obtain by induction
\[
\phi^{(k)}(z_0,x)=  x^{l+1+2k} \widetilde{\phi}_k(z_0,x), \quad \widetilde{\phi}_k(z_0,x)\in W^{1,p}(0,1),\ \ \widetilde{\phi}_k(z_0,0)=C^\phi_{l,k},
\]
which finishes the proof if $\ti{g}_k\in W^{1,p}$. The case $\ti{g}_k\in L^{\ti{p}}$ is similar.
\end{proof}

\subsection{Series representation of $\theta(z,x)$}\label{sec:iii.3}

The representation of the second solution is not unique since we can add $F(z)\phi(z,x)$, where $F$ is an arbitrary real entire function.
However, the singular part of $\theta(z,x)$ admits a Frobenius type decomposition.
Namely, the main result of this subsection is the following representation of $\theta(z,x)$.

\begin{lemma}\label{lem:theta_fr}
Let $l> -1/2$ and $xq(x)\in L^p(0,1)$for some $p\in I_l$.
Set $n_l:=\floor{l+1/2}$ and $\eps_l = n_l -l \in(-\frac{1}{2},\frac{1}{2}]$.
Then the solution $\theta(z,x)$ admits the following representation
\be\label{eq_5.14t}
\theta(z,x) = \frac{x^{-l}}{2l+1} \sum_{k=0}^\infty \frac{x^{2k}\widetilde{\theta}_k(z_0,x)}{k!}(z-z_0)^k + F(z) \phi(z,x),
\ee
with, if $l+1/2\not\in\N$,
\be\label{eq:3.15}
\widetilde{\theta}_k(z_0,.) \in W^{1,p}(0,1), \quad
\begin{cases} p\in I_l, & k<n_l,\\
p\in I_l, & k\ge n_l \text{ and } \eps_l\in(-\frac{1}{2},0) \text{ or } n_l=0,\\
p\in I_l \cap [1, \frac{1}{2\eps_l}), & k\ge n_l \text{ and } \eps_l \in[0,\frac{1}{2}),
\end{cases}
\ee
\be\label{eq:3.16}
\widetilde{\theta}_k(z_0,0) = C^\theta_{l,k} := \frac{(-1)^k}{4^k (-l+\frac{1}{2})_k},
\ee
and, if $l+1/2 = n_l\in\N$,
\be\label{eq:3.17}
\widetilde{\theta}_k(z_0,.) \in  \begin{cases}
W^{1,p}(0,1), \quad \widetilde{\theta}_k(z_0,0) = C^\theta_{l,k}, & k < n_l, \ p\in I_l,\\
\hat{C}^\theta_{l,k} \log(x) + W^{1,\ti{p}}(0,1), \: \ti{p}<p,  & k \ge n_l,\: p\in I_l \cap (1,\infty],\\
\hat{C}^\theta_{l,k} \log(x) (1 + o(1)), & k \ge n_l,\: p=1, \end{cases}
\ee
\be
C^\theta_{l,k} := \frac{1}{4^k (l-k+\frac{1}{2})_k}, \quad
\hat{C}^\theta_{l,k} := \frac{-C^\phi_{l,k-n_l}}{4^l \Gamma(l+1/2)},
\ee
and $F(z)$ is a real entire function and any polynomial part of degree up to order $n_l$ could be absorbed
in the series.

For the derivative we obtain
\be
\theta'(z,x) = \frac{x^{-l-1}}{2l+1} \sum_{k=0}^\infty \frac{x^{2k}\widehat{\theta}_k(z_0,x)}{k!}(z-z_0)^k + F(z) \phi'(z,x),
\ee
where $\widehat{\theta}_k$ is of the same nature as $\widetilde{\theta}_k$ with $C^\theta_{l,k}$, $\hat{C}^\theta_{l,k}$
replaced by $(-l+2k) C^\theta_{l,k}$, $(-l+2k) \hat{C}^\theta_{l,k}$, respectively.

Furthermore,
\be\label{eq:3.18}
\lim_{x\to 0}x\widetilde{\theta}_k'(z,x)=0, \quad \text{if} \quad k \in
\begin{cases}\N_0, &  l+1/2\notin\N,\\
k\le n_l-1, & l+1/2\in\N. \end{cases}
\ee
If $l+1/2\in\N$ and $k\ge n_l$, then
\be\label{eq:3.19}
\begin{cases}
\lim_{x\to 0}x \widetilde{\theta}_{k}'(z,x)= \hat{C}^\theta_{l,k}, & p\in (1,\infty],\\
\lim_{x\to 0} x^{1+\eps} \widetilde{\theta}_{k}'(z,x)= 0, \:\: \eps>0, & p=1, \: k \ge n_l.
\end{cases}
\ee
\end{lemma}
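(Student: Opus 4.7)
The plan is to mirror the proof of Lemma~\ref{lem_5.5} by expanding the entire function $\theta(z,x)$ as a Taylor series in $z$ around $z_0$,
\[
\theta(z,x) = \sum_{k\ge 0}\frac{\theta^{(k)}(z_0,x)}{k!}(z-z_0)^k,
\]
whose coefficients satisfy the recursion $(\tau-z_0)\theta^{(k+1)}(z_0,x)=(k+1)\theta^{(k)}(z_0,x)$, with base case $\theta^{(0)}(z_0,x)=\theta(z_0,x)=\frac{x^{-l}}{2l+1}\widetilde\theta(z_0,x)$ furnished by Lemma~\ref{lem_ass}. The induction step is carried out by variation of parameters using the fundamental system $\{\phi(z_0,\cdot),\theta(z_0,\cdot)\}$ (with Wronskian one): one writes
\[
\theta^{(k+1)}(z_0,x) = (k+1)\Bigl[\theta(z_0,x)\int_0^x\phi(z_0,y)\theta^{(k)}(z_0,y)\,dy - \phi(z_0,x)\int_{a_k}^x\theta(z_0,y)\theta^{(k)}(z_0,y)\,dy\Bigr] + \beta_k\phi(z_0,x),
\]
where the constants $\beta_k\in\R$ and the lower limits $a_k$ of the possibly singular second integrals accumulate, over $k$, into the real entire function $F(z)$ in~\eqref{eq_5.14t}.

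The decisive point is the behavior of the two integrands near $y=0$. The first, $\phi(z_0,y)\theta^{(k)}(z_0,y)\sim y^{2k+1}$, is always integrable; the second, $\theta(z_0,y)\theta^{(k)}(z_0,y)\sim y^{-2l+2k}/(2l+1)$, realizes the Frobenius resonance between the characteristic exponents $-l$ and $l+1$. In the integer case $\eps_l=\tfrac12$ the critical value $-2l+2k=-1$ is attained exactly at $k=n_l-1$, so integration yields a $\log(x)$ term in $\theta^{(n_l)}$, which then propagates to all $\theta^{(k)}$ with $k\ge n_l$ and supplies the $\hat C^\theta_{l,k}\log(x)$ summand in~\eqref{eq:3.17}; the split between $p=1$ and $p>1$ there reflects the weaker endpoint Hardy-type bound on $\mathcal K_\ell$ in $L^1$, where only the pointwise leading log asymptotic survives, not the full $W^{1,\ti p}$-regularity of the remainder. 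In the non-integer case the exponent $-2l+2k$ is never $-1$, but for $\eps_l\in[0,\tfrac12)$ the iterated $\mathcal K_\ell$-bounds tracking the subleading terms through the iteration produce a power-type contribution whose derivative $\sim x^{-2\eps_l}$ forces the sharp restriction $p<1/(2\eps_l)$ in~\eqref{eq:3.15}; for $\eps_l\le 0$ or $n_l=0$ no further restriction beyond $p\in I_l$ arises, since the relevant integrals are then controlled by the standard Hardy bounds of Appendix~\ref{app:hi}.

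The explicit constants $C^\theta_{l,k}$ and $\hat C^\theta_{l,k}$ are extracted from the $x\to 0$ limit of $x^{l-2(k+1)}$ times the above Green's representation, exactly as in Lemma~\ref{lem_5.4B}, and are cross-checked against the unperturbed expansion~\eqref{eq:2.07}--\eqref{eq:2.09} in the case $q\equiv 0$. The Sobolev regularity of $\widetilde\theta_k$ is propagated through the induction by the bounds for $\mathcal K_\ell$ from Lemmas~\ref{lem:Kl} and~\ref{lem:KlW}, while the derivative representation (with constants $(-l+2k)C^\theta_{l,k}$ and $(-l+2k)\hat C^\theta_{l,k}$) follows from differentiating the Green's formula as in Corollary~\ref{cor_ass}. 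The boundary limits~\eqref{eq:3.18}--\eqref{eq:3.19} are obtained inductively from~\eqref{hi_lim1} for the smooth part, with the $\log(x)$ factor differentiating to $1/x$ and hence producing the nonzero value $\hat C^\theta_{l,k}$ of $x\widetilde\theta_k'$ in the integer case with $p>1$, and only the weaker $x^{1+\eps}$-bound when $p=1$. The main obstacle I anticipate is the careful bookkeeping at the resonance index $k=n_l$ in the integer case: one must simultaneously extract the exact log coefficient, preserve the correct constant $C^\theta_{l,k}$ in the smooth part, and verify that all polynomial-in-$z_0$ corrections generated along the induction package into the single real entire function $F(z)$, with its polynomial part of degree $\le n_l$ being freely transferable back into the main series.
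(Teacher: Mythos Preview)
Your approach is essentially the same as the paper's: Taylor-expand $\theta(z,x)$ in $z$, use the recursion $(\tau-z_0)\theta^{(k+1)}=(k+1)\theta^{(k)}$, and solve each step by variation of parameters against $\{\phi(z_0,\cdot),\theta(z_0,\cdot)\}$, controlling the resulting integrals via the Hardy-type operators from Appendix~\ref{app:hi}. The paper merely packages the inductive step into a dedicated Lemma~\ref{lem_5.4} (the analogue of Lemma~\ref{lem_5.4B} with the second integral taken as $\int_x^1$, i.e.\ via $\hat{\mathcal K}_\ell$ rather than $\mathcal K_\ell$), which cleanly separates the cases $k<l-\tfrac12$ and $k=l-\tfrac12$.

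Two points where your write-up is looser than the paper and worth tightening. First, the origin of $F(z)$: the constants $\beta_k$ are not free parameters that ``accumulate'' into an entire function---for a fixed entire $\theta(z,x)$ each $\theta^{(k)}(z_0,x)$ is determined. What actually happens is that for $k\le n_l$ the $\phi$-component can be absorbed into the Frobenius coefficient (this is the statement that the polynomial part of $F$ up to degree $n_l$ is absorbable), while at the transition step one has $\theta^{(n_l+1)}(z,x)=\check\theta(z,x)+G(z)\phi(z,x)$ with $x^{-l-1}\check\theta\to 0$; subtracting $F\phi$ with $F^{(n_l+1)}=G$ reduces to the case $G\equiv 0$, after which Lemma~\ref{lem_5.4B} applies verbatim for all $k>n_l$. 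This one-time subtraction is how the paper handles $F$, and it avoids the bookkeeping you flag as the ``main obstacle''. Second, the restriction $p<\tfrac{1}{2\eps_l}$ in \eqref{eq:3.15} does not arise ``through the iteration'' but at the single step $k=n_l-1\to n_l$: this is precisely the borderline case $l-1\le k<l-\tfrac12$ in Lemma~\ref{lem_5.4}, where $\hat{\mathcal K}_{2(l-k)-1}$ with $2(l-k)-1=1-2\eps_l\in(0,1]$ forces the $W^{1,p}$ bound of Lemma~\ref{lem:KlW} to cap $p$ at $\tfrac{1}{1-(1-2\eps_l)}=\tfrac{1}{2\eps_l}$. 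Once this is in place, subsequent steps via Lemma~\ref{lem_5.4B} preserve the regularity without further loss.
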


To prove this result we need again a preliminary lemma.

\begin{lemma}\label{lem_5.4}
Let $l>- 1/2$ and $xq(x)\in L^p(0,1)$ for some $p\in I_l$.
Assume that $g_k(x) = x^{-l+2k} \ti{g}_k(x)$ with $k\geq 0$ and
$\ti{g} \in W^{1,p}(0,1)$. Then the solution of the following inhomogeneous problem
\[
(\tau -z) f_k= g_k,\qquad \lim_{x\to 0} x^l f_k(x)=0,
\]
is given by the following formulas:

In the case $k<l - \frac{1}{2}$ we have
\be\label{eq:3.21}
f_k(x)=-\frac{x^{-l+2(k+1)} \ti{f}_k(x)}{4(k+1)(k-l+1/2)},
\ee
where
\be
\ti{f}_k\in W^{1,p}(0,1), \:
\begin{cases} p\in [1, \frac{1}{2(k-l+1)}), & l-1 \le k < l -\frac{1}{2},\\
p\in [1,\infty], & k<l-1, \end{cases} \quad
\ti{f}_k(0)= \ti{g}_k(0),
\ee
and $\lim_{x\to 0} x \ti{f}_k'(x)=0$.

In the case $k=l-\frac{1}{2}$ we have
\be
f_k(x)= \begin{cases}
-\frac{\ti{g}_k(0)\log(x)}{2l+1} \phi(z,x) + x^{l+1} \ti{f}_k(x),\quad \ti{f}_k\in W^{1,p}(0,1), &
p\in (1,\infty],\\
-\frac{\ti{g}_k(0)\log(x)}{2l+1} x^{l+1} + x^{l+1} \ti{f}_k(x),\quad \ti{f}_k = o(\log(x)), & p=1.
\end{cases}
\ee
If, additionally, $\lim_{x\to 0} x \ti{g}_k'(x)=0$, then $\lim_{x\to 0} x \ti{f}_k'(x)= 0$ for $p\in(1,\infty]$
and $\lim_{x\to 0} x^{1+\eps} \ti{f}_k'(x)=0$, $\eps>0$, for $p=1$.

For the derivative we obtain
\be\label{eq:3.21d}
f_k'(x)=-\frac{x^{-l-1+2(k+1)} \hat{f}_k(x)}{4(k+1)(k-l+1/2)},
\ee
where $\hat{f}_k$ is of the same type as $\ti{f}_k$ with $\hat{f}_k(0) = (-l+2(k+1)) \ti{g}_k(0)$.

If $\ti{g}_k\in L^{\ti{p}}(0,1)$ then \eqref{eq:3.21}, \eqref{eq:3.21d} hold with $\ti{f}_k, \hat{f}_k \in L^{\ti{p}}(0,1)$,
respectively.
\end{lemma}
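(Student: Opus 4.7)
The approach is variation of parameters with the fundamental system $\phi(z,\cdot),\theta(z,\cdot)$ of Lemma~\ref{lem_ass} (Wronskian one). Compared with Lemma~\ref{lem_5.4B}, the new difficulty is that for $k\le l-\frac{1}{2}$ the integral $\int_0^x g_k(y)\theta(z,y)\,dy$ diverges at $0$ (integrand of order $y^{-2l+2k}$), so the same Green-function representation breaks down. My plan is to use instead
\[
f_k(x)=\theta(z,x)\int_0^x g_k(y)\phi(z,y)\,dy+\phi(z,x)\int_x^c g_k(y)\theta(z,y)\,dy
\]
for a fixed small $c>0$; changing $c$ adds only a multiple of $\phi$, and the boundary condition $\lim_{x\to 0}x^l f_k(x)=0$ is immediate from leading orders. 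Inserting the factorizations of Lemma~\ref{lem_ass} and introducing the dual operator $\hat{\mathcal{K}}_\ell f(x):=x^\ell\int_x^c y^{-\ell-1}f(y)\,dy$ already used in the proof of Lemma~\ref{lem_ass} rewrites this as
\[
f_k(x)=\frac{x^{-l+2(k+1)}}{2l+1}\Bigl[\widetilde{\theta}(z,x)\bigl(\mathcal{K}_{2k+1}(\widetilde{g}_k\widetilde{\phi})\bigr)(x)+\widetilde{\phi}(z,x)\bigl(\hat{\mathcal{K}}_{2l-2k-1}(\widetilde{g}_k\widetilde{\theta})\bigr)(x)\Bigr].
\]

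For $k<l-\frac{1}{2}$ we have $2l-2k-1>0$, so the Hardy-type mapping properties of Appendix~\ref{app:hi} (Lemmas~\ref{lem:Kl} and~\ref{lem:KlW}), together with $\widetilde{\phi},\widetilde{\theta}\in W^{1,p}(0,1)$ from Lemma~\ref{lem_ass}, place the bracket in $W^{1,p}(0,1)$. The restriction $p<1/(2(k-l+1))$ in the subrange $l-1\le k<l-\frac{1}{2}$ appears precisely because $\hat{\mathcal{K}}_{2l-2k-1}$ then has exponent in $(0,1]$, and the corresponding Hardy bound is only valid up to this threshold. The initial value $\widetilde{f}_k(0)=\widetilde{g}_k(0)$ is extracted from the continuity~\eqref{hi_01contin}, giving $\mathcal{K}_{2k+1}(\widetilde{g}_k\widetilde{\phi})(0)=\widetilde{g}_k(0)/(2k+2)$ and analogously $\hat{\mathcal{K}}_{2l-2k-1}(\widetilde{g}_k\widetilde{\theta})(0)=\widetilde{g}_k(0)/(2l-2k-1)$, combined with the arithmetic identity $\tfrac{1}{2(k+1)}+\tfrac{1}{2l-2k-1}=\tfrac{2l+1}{2(k+1)(2l-2k-1)}$ that collapses the prefactor into $-1/(4(k+1)(k-l+1/2))$.

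For $k=l-\frac{1}{2}$ (so $l+\frac{1}{2}\in\N$) the operator $\hat{\mathcal{K}}_0$ produces a logarithm, and I split
\[
\int_x^c y^{-1}\widetilde{g}_k(y)\widetilde{\theta}(z,y)\,dy=-\widetilde{g}_k(0)\log(x/c)+\int_x^c y^{-1}\bigl[\widetilde{g}_k(y)\widetilde{\theta}(z,y)-\widetilde{g}_k(0)\bigr]\,dy.
\]
Multiplying by $\phi(z,x)=x^{l+1}\widetilde{\phi}(z,x)$ and noting $-l+2(k+1)=l+1$, the singular term yields exactly $-\widetilde{g}_k(0)\log(x)\,\phi(z,x)/(2l+1)$, while the remainder is $W^{1,p}(0,1)$ for $p\in(1,\infty]$ (a $W^{1,p}$-function is then H\"older, so $y^{-1}[\widetilde{g}_k\widetilde{\theta}-\widetilde{g}_k(0)]$ is integrable near $0$) and is $o(\log x)$ in the borderline $p=1$ case.

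The derivative formula~\eqref{eq:3.21d} comes from direct differentiation of $f_k$: the boundary contributions of the two integrals cancel, and Corollary~\ref{cor_ass} expresses $\theta'$ and $\phi'$ via $\hat{\theta},\hat{\phi}$, yielding the same structure for $f_k'$. The evaluation $\hat{f}_k(0)=(-l+2(k+1))\widetilde{g}_k(0)$ uses $\hat{\theta}(0)=-l$, $\hat{\phi}(0)=l+1$ together with the identity $(l+1)(2k+2)-l(2l-2k-1)=-(2l+1)(l-2k-2)$. The vanishing $\lim_{x\to 0}x\widetilde{f}_k'(x)=0$ then follows from $\hat{f}_k(x)=(-l+2(k+1))\widetilde{f}_k(x)+x\widetilde{f}_k'(x)$ and the matching value $\hat{f}_k(0)=(-l+2(k+1))\widetilde{f}_k(0)$; the stronger limits under the additional hypothesis $\lim_{x\to 0}x\widetilde{g}_k'(x)=0$ come from~\eqref{hi_lim1} applied to $\mathcal{K}_{2k+1}$ and its dual version for $\hat{\mathcal{K}}_{2l-2k-1}$. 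The $L^{\tilde{p}}$-variant is parallel, with Lemma~\ref{lem:Kl} replacing Lemma~\ref{lem:KlW}. The main obstacle will be the log case $k=l-\frac{1}{2}$ together with the transitional band $l-1\le k<l-\frac{1}{2}$, where the small exponent $2l-2k-1\in(0,1]$ forces the sharp restriction on $p$ and degrades the derivative limit to $\lim_{x\to 0}x^{1+\eps}\widetilde{f}_k'(x)=0$ in the $p=1$ log subcase.
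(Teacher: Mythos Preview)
Your proposal is correct and follows essentially the same route as the paper: the same variation-of-parameters representation $f_k=\theta\int_0^x g_k\phi+\phi\int_x^c g_k\theta$, the same factorization via $\mathcal{K}_{2k+1}$ and $\hat{\mathcal{K}}_{2(l-k)-1}$, and the same appeal to the Hardy-type bounds of Appendix~\ref{app:hi} (in particular \eqref{hi_03b}--\eqref{hi_03c} for the logarithmic case $k=l-\tfrac12$). Your write-up is in fact more explicit about the arithmetic that produces the prefactor $-1/(4(k+1)(k-l+1/2))$ and about why the $p$-restriction arises precisely when $2l-2k-1\in(0,1]$.
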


\begin{proof}
Observe that $f_k$ admits the representation
\[
f_k(x)=c_1\theta(z,x)+c_2\phi(z,x)+(G_z g_k)(x),
\]
where
\begin{align*}
(G_z g_k)(x)&= \theta(z,x)\int_0^x g_k(y)\phi(z,y) dy + \phi(z,x)\int_x^1 g_k(y)\theta(z,y) dy\\
&=\frac{x^{-l+2(k+1)}}{(2l+1)}\left(\widetilde{\theta}(z,x)(\mathcal{K}_{2 k+1}(\ti{g}_k \widetilde{\phi}))(x) +
\widetilde{\phi}(z,x)(\hat{\mathcal{K}}_{2(l-k)-1}(\ti{g}_k \widetilde{\theta}))(x)\right).
\end{align*}
Since $\ti{g}_k, \widetilde{\phi}, \widetilde{\theta}\in W^{1,p}(0,1)$ we get $x^{l-2(k+1)} (G_z g_k)(x)\in W^{1,p}(0,1)$ in the case
$k<l-\frac{1}{2}$ by Lemma~\ref{lem:KlW}.
Moreover, the condition $\lim_{x\to 0}x^{-(l+1)}f(x)=0$ implies $c_1=0$, that is, $f_k= c_2 \phi + G_z g_k$.

Next, by \eqref{hi_01contin} and \eqref{hi_02contin} we find
\begin{align*}
\ti{f}_k(0)= \lim_{x\to 0}x^{l-2(k+1)} G_z(g_k) =
\frac{1}{2l+1}\left(\frac{\ti{g}_k(0)\widetilde{\phi}(z,0)}{2k+2} + \frac{\ti{g}_k(0) \widetilde{\theta}(z,0)}{2(l-k)-1}\right)\\
=-\frac{\ti{g}_k(0)}{4(k+1)(k-l+1/2)}.
\end{align*}
The rest follows as in Lemma~\ref{lem_5.5}.
This completes the proof in the case $k<l-\frac{1}{2}$.

In the case $k=l-\frac{1}{2}$ use \eqref{hi_03b} and \eqref{hi_03c},
which finishes the proof in the case $k=l-\frac{1}{2}$.
\end{proof}

Note that the case $k>l-\frac{1}{2}$ is covered by Lemma~\ref{lem_5.4B}.

\begin{proof}[Proof of Lemma~\ref{lem:theta_fr}]
The proof is similar to the proof of Lemma~\ref{lem_5.5}. First we have to use Lemma~\ref{lem_5.4} to obtain the
coefficients for $k\le n_l$ (in the case $k=n_l=0$ use Lemma~\ref{lem_5.4}). Then we note that by Lemma~\ref{lem_5.4B}
\[
\theta^{(n_l+1)}(z,x) = \check{\theta}^{n_l+1}(z,x) + G(z) \phi(z,x), \quad \lim_{x\to 0} x^{-l-1} \check{\theta}^{n_l+1}(z,x) =0.
\]
Hence, replacing $\theta(z,x) \to \theta(z,x) - F(z) \phi(z,x)$, where $F(z)$ is an entire function such that $F^{(n_l+1)}(z)= G(z)$,
we see that we can choose $G(z)=0$ without loss of generality. Thus we can assume
\[
\lim_{x\to 0} x^{-l-1} \theta^{(k)}(z_0,x) =0, \quad k>n_l,
\]
and continue to determine the coefficients for $k> n_l$ using Lemma~\ref{lem_5.4B}. For the case $l+1/2=n_l$ use \eqref{Kllog1}, \eqref{Kllog2}
together with the facts $x^{-1} (\ti{\phi}(z,x) -1) \in L^{\ti{p}}(0,1)$ and $\log(x) \ti{\phi}'(z,x) \in L^{\ti{p}}(0,1)$ for any $\ti{p}<p$ (recall that
functions in $W^{1,p}(0,1)$ are H\"older continuous with exponent $\gam=1-\frac{1}{p}$ for the first claim and H\"olders inequality for the second claim).

Concerning \eqref{eq:3.19} in the case $p>1$ observe that one can strengthen \eqref{eq:31.09} to read
$x\widetilde{\phi}'(z,x)=O(x^{1-1/p})$ and $x\widetilde{\theta}'(z,x)=O(x^{1-1/p})$.
\end{proof}

Lemma~\ref{lem_5.5} shows that the entire solution $\phi(z,x)$ is determined uniquely and has a Frobenius type form.
The solution $\theta(z,x)$ also has a Frobenius type form but it is not unique since we can add $F(z)\phi(z,x)$, where $F$ is an arbitrary real entire function.
Our next aim is to fix $F(z)$ in a suitable way.

\begin{definition}\label{def_thetafrob}
The solution $\theta(z,x)$ is called a Frobenius solution if
\be\label{eq:3.22}
F^{(n_l+1)}(z)=\lim_{x\to0}x^{-(l+1)}\frac{\partial^{(n_l+1)}}{\partial z^{(n_l+1)}}\theta(z,x)\equiv 0,
\ee
that is $\theta(z,x)$ is a Frobenius solution if and only if the function $F(z)$ in the representation \eqref{eq_5.14t} is a polynomial of order at most
$n_l:=\floor{l+1/2}$.
\end{definition}

\begin{remark}
There is another way to define a Frobenius solution:
Choose points $z_0,\dots, z_{n_l}$ and let
\[
L_j(z) = \prod_{k=0,k\ne j}^{n_l} \frac{z-z_k}{z_j-z_k}
\]
be the Lagrange interpolation polynomials. Then one can require that
\be
\lim_{x\to 0} W_x(\theta(z,x), \sum_{j=0}^{n_l} L_j(z) \theta(z_j,x)) = F(z) - \sum_{j=0}^{n_l} L_j(z) F(z_j)
\ee
vanishes. To see this just observe that
\[
\sum_{j=0}^{n_l} L_j(z) \theta(z_j,x) = \theta(z,x) + \left(F(z) - \sum_{j=0}^{n_l} L_j(z) F(z_j)\right) \phi(z,x) + o(x^{l+1}).
\]
\end{remark}

In particular, note that for a Frobenius solution we can choose $F(z)=0$ in \eqref{eq_5.14t} without loss of generality.

\begin{corollary}\label{cor:wronski}
Let $l> -1/2$ and $xq(x)\in L^1(0,1)$.
Let $\phi(z,x)$ and $\theta(z,x)$ be the solutions of $\tau u = z u$ constructed in Lemma~\ref{lem_ass}.
Then for any $z,\zeta\in\C$:
\begin{enumerate}
\item $W_x(\theta(z),\phi(z))\equiv 1$,
\item $\lim\limits_{x\to 0}W_x(\theta(z),\phi(\zeta))= 1$,
\item $\lim\limits_{x\to 0}W_x(\phi^{(i)}(z),\phi^{(j)}(\zeta))= 0$, $i,j\in\N_0$,
\item $\lim\limits_{x\to 0}W_x\big(\theta^{(i)}(z),\phi^{(j)}(\zeta)\big)=0$ if $i+j\ge 1$,
\item $\lim\limits_{x\to 0}W_x(\theta^{(i)}(z),\theta^{(j)}(\zeta))= \begin{cases} \infty, & i\ne j, i+j< n_l,\\
\infty, &  i\ne j, i+j=n_l, i j=0,\\
(j-i) C_{l,j}^\theta C_{l,i}^\theta, & i+j=n_l, i j >0, \end{cases}$
\item if $i+j\ge n_l+1$, then $\lim\limits_{x\to 0}W_x\big(\theta^{(i)}(z),\theta^{(j)}(\zeta)\big)=\begin{cases}-F^{(i)}(z),& j=0,\\
    F^{(j)}(\zeta), & i=0,\\
    0, & ij\neq 0.\end{cases}$
\end{enumerate}
\end{corollary}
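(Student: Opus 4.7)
The plan is to handle (i) via the classical Liouville identity and then reduce all the other assertions to a single direct computation using the Frobenius-type expansions. Part (i) is immediate: both $\theta(z,\cdot)$ and $\phi(z,\cdot)$ solve $(\tau-z)u=0$, so $W_x(\theta(z),\phi(z))$ is $x$-independent, and the value $1$ already appears in \eqref{eq:31.09} of Lemma~\ref{lem_ass}.

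For (ii)--(vi) I substitute the representations supplied by Lemmas~\ref{lem_5.5}, \ref{lem:theta_fr} and Corollary~\ref{cor_ass}, namely $\phi^{(k)}(z,x)=x^{l+1+2k}\widetilde{\phi}_k(z,x)$, $\phi'^{(k)}(z,x)=x^{l+2k}\widehat{\phi}_k(z,x)$, $\theta^{(k)}(z,x)=\frac{x^{-l+2k}}{2l+1}\widetilde{\theta}_k(z,x)+F^{(k)}(z)\phi(z,x)$, and $\theta'^{(k)}(z,x)=\frac{x^{-l-1+2k}}{2l+1}\widehat{\theta}_k(z,x)+F^{(k)}(z)\phi'(z,x)$, with the logarithmic modification of $\widetilde{\theta}_k$ for $k\ge n_l$ in the case $l+\tfrac{1}{2}\in\N$. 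The boundary values at $x=0$ are $\widetilde{\phi}_k(z,0)=C^\phi_{l,k}$, $\widehat{\phi}_k(z,0)=(l+1+2k)C^\phi_{l,k}$, $\widetilde{\theta}_k(z,0)=C^\theta_{l,k}$, $\widehat{\theta}_k(z,0)=(-l+2k)C^\theta_{l,k}$. Substituting into any of the bilinear Wronskians and grouping according to powers of $x$ reduces every limit to reading off the leading exponent and evaluating the corresponding product of boundary constants.

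This immediately yields (iii) (leading exponent $2l+1+2(i+j)>0$), (iv) (the pure $T$-contribution carries $x^{2(i+j)}$, vanishing once $i+j\ge 1$, while the $F^{(i)}(z)\phi\cdot\phi^{(j)}$ remnant vanishes by (iii)), and as the $i=j=0$ specialization of these calculations part (ii), whose leading coefficient is $\frac{(l+1)-(-l)}{2l+1}=1$. For (v) and (vi) I split $\theta^{(k)}=T^{(k)}+F^{(k)}(z)\phi$ with $T^{(k)}:=\frac{x^{-l+2k}}{2l+1}\widetilde{\theta}_k$ and expand bilinearly:
\begin{align*}
W_x(\theta^{(i)}(z),\theta^{(j)}(\zeta)) &= W_x(T^{(i)}(z),T^{(j)}(\zeta)) + F^{(j)}(\zeta)\,W_x(T^{(i)}(z),\phi(\zeta)) \\
&\quad - F^{(i)}(z)\,W_x(T^{(j)}(\zeta),\phi(z)) + F^{(i)}(z)F^{(j)}(\zeta)\,W_x(\phi(z),\phi(\zeta)).
\end{align*}
The main piece $W_x(T^{(i)},T^{(j)})$ carries $x^{-2l-1+2(i+j)}$ times a coefficient whose leading value is proportional to $(j-i)C^\theta_{l,i}C^\theta_{l,j}$: this diverges when $i+j<n_l$ and $i\ne j$ (case 1 of (v)), produces the finite value in case 3 of (v), and vanishes once $i+j\ge n_l+1$. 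The cross Wronskians $W_x(T^{(i)},\phi)$ and $W_x(T^{(j)},\phi)$ are handled by (ii) when the $T$-factor has index $0$ (giving $\pm 1$) and by the same $x^{2i}$- or $x^{2j}$-argument used in (iv) otherwise (giving $0$); the last term vanishes by (iii). Combining these pieces produces exactly the values $F^{(j)}(\zeta)$, $-F^{(i)}(z)$, or $0$ claimed in (vi), according to which of $i,j$ vanishes.

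The main obstacle is the logarithmic case $l+\tfrac{1}{2}\in\N$: for $k\ge n_l$ one has $\widetilde{\theta}_k(z,x)\sim\hat{C}^\theta_{l,k}\log(x)$, so one must track log factors inside the leading $x$-power analysis. In case 2 of (v) ($i+j=n_l$, $ij=0$) this log-singularity is precisely what pushes the limit to $\infty$; in (vi) the algebraic factor $x^{-2l-1+2(i+j)}$ with $i+j\ge n_l+1$ decays strictly faster than any logarithm, so the log-singular contributions are killed and only the $F$-dependent cross terms survive. All continuity at $x=0$ required for evaluating boundary values is provided by the Sobolev embedding $W^{1,p}(0,1)\hookrightarrow C[0,1]$ applicable to the coefficients from Lemmas~\ref{lem_5.5}, \ref{lem:theta_fr} and Corollary~\ref{cor_ass}, which under the hypothesis $xq\in L^1(0,1)$ applies with $p=1\in I_l$ for every $l>-\tfrac{1}{2}$.
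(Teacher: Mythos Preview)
Your approach is essentially the same as the paper's: both proofs insert the Frobenius-type representations from Lemmas~\ref{lem_5.5} and \ref{lem:theta_fr} (plus Corollary~\ref{cor_ass}) into the Wronskians, read off the leading power of $x$, and use the limits \eqref{eq_5.14D}, \eqref{eq:3.18}, \eqref{eq:3.19} to control the derivative pieces. The paper streamlines (iv) by first reducing to $F\equiv 0$ via (iii), whereas you keep $F$ explicit and expand bilinearly; both routes are equivalent.

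One slip worth fixing: your formula $\theta^{(k)}(z,x)=\frac{x^{-l+2k}}{2l+1}\widetilde{\theta}_k(z,x)+F^{(k)}(z)\phi(z,x)$ is not quite right --- the $z$-derivative of $F(z)\phi(z,x)$ is the full Leibniz sum $\sum_{m=0}^k\binom{k}{m}F^{(m)}(z)\phi^{(k-m)}(z,x)$, not just the $m=k$ term. This does not damage your argument, since the omitted terms with $m<k$ are $O(x^{l+3})$ and their Wronskians with any $\theta^{(j)}$ or $\phi^{(j)}$ vanish at $x=0$ by exactly the same power-counting you already use for (iii) and (iv). You should either write $(F\phi)^{(k)}$ and note that only the $F^{(k)}\phi$ piece survives in the limit, or adopt the paper's device of assuming $F\equiv 0$ up front for the mixed and $\theta$--$\theta$ Wronskians, reinstating $F$ only at the end via (ii)--(iv).
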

\begin{proof}
$(i)$ and $(ii)$. This was already part of Lemma~\ref{lem_ass} (cf.\ \eqref{eq:31.09}).

$(iii)$. Observe that, by Lemma~\ref{lem_5.5}, $\phi^{(j)}(z,x)=x^{l+1+2j}\widetilde{\phi}_j(z,x)$, where $\widetilde{\phi}_j(z,.)\in W^{1,p}(0,1)$
satisfies \eqref{eq_5.14D}.

$(iv)$. By $(iii)$, we can assume without loss of generality that $\theta$ is of Frobenius type, i.e., $F\equiv 0$ in \eqref{eq_5.14t}.
Furthermore, by Lemma~\ref{lem:theta_fr}, $\theta^{(j)}(z,x)=x^{-l+2j}\widetilde{\theta}_j(z,x)$, where $\widetilde{\theta}_j(z,.)$ is given
by \eqref{eq:3.15} or \eqref{eq:3.17}. Taking into account \eqref{eq_5.14D} and \eqref{eq:3.18}, \eqref{eq:3.19} proves the claim.

$(v)$. Note that
\begin{align*}
W_x(x^{-l+2i}\widetilde{\theta}_i(z),x^{-l+2j}\widetilde{\theta}_j(z))&=2(j-i)x^{2(i+j-l)-1}\widetilde{\theta}_i(z,x)\widetilde{\theta}_j(z,x)\\
&\quad +x^{2(i+j-l)}W_x\big(\widetilde{\theta}_i(z),\widetilde{\theta}_j(z)\big).
\end{align*}
Since $2(i+j-l)\le 2(n_l-l)=2(\floor{l+1/2}-l)\le 1$, \eqref{eq:3.18} and \eqref{eq:3.19} complete the proof of $(v)$.

The proof of $(vi)$ follows from $(ii)$--$(iv)$ and the representation from Lemma~\ref{lem:theta_fr}.
\end{proof}
\begin{corollary}\label{cor:3.7}
Let $l> -1/2$ and $xq(x)\in L^1(0,1)$.
Let $\theta(z,x)$ be the solutions of $\tau u = z u$ constructed in Lemma~\ref{lem_ass}. Then $\theta(z,x)$ is a Frobenius type solution if and only if
\be\label{eq:theta_wro}
\lim_{x\to 0}W_x(\theta^{(n_l+1)}(z),\theta(\zeta))\equiv 0,\quad z,\zeta\in\C.
\ee
\end{corollary}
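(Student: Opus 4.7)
The plan is to recognize that this corollary is essentially a direct specialization of item \textit{(vi)} of Corollary \ref{cor:wronski}, which already computes the relevant Wronskian limit in terms of $F^{(n_l+1)}$. So the work reduces to matching indices and invoking Definition \ref{def_thetafrob}.

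More precisely, first I would set $i=n_l+1$ and $j=0$ in item \textit{(vi)} of Corollary \ref{cor:wronski}. Since $i+j=n_l+1\ge n_l+1$, the hypothesis of that item is satisfied, and the branch $j=0$ applies, yielding
\[
\lim_{x\to 0}W_x\bigl(\theta^{(n_l+1)}(z),\theta(\zeta)\bigr) = -F^{(n_l+1)}(z)
\]
for every $z,\zeta\in\C$. Note that the right-hand side is automatically independent of $\zeta$; this matches the intuition that the singular behavior of $\theta(\zeta,x)$ near $x=0$ is driven by the leading $x^{-l}$ coefficient $\widetilde{\theta}_0(\zeta,0)$, which is a nonzero constant independent of $\zeta$ by \eqref{eq:3.16}.

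Second, I would translate the condition in Definition \ref{def_thetafrob}: $\theta(z,x)$ is a Frobenius solution exactly when $F^{(n_l+1)}(z)\equiv 0$, equivalently when the entire function $F(z)$ appearing in the representation \eqref{eq_5.14t} is a polynomial of degree at most $n_l$. Combining this with the identity displayed above immediately gives the equivalence asserted in \eqref{eq:theta_wro}: the limit vanishes identically in $(z,\zeta)$ if and only if $F^{(n_l+1)}\equiv 0$, i.e., iff $\theta$ is of Frobenius type.

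I do not anticipate any obstacle: all the analytic work — namely, the convergence of the Wronskian limits, their independence of $\zeta$ in the $j=0$ branch, and the appearance of $F^{(n_l+1)}(z)$ — has already been carried out in Corollary \ref{cor:wronski}, which in turn rests on the Frobenius representations of Lemmas \ref{lem_5.5} and \ref{lem:theta_fr}. The present corollary is therefore just an "inversion" of item \textit{(vi)}, reading the Wronskian formula as a characterization of $F$ rather than as a computation of the Wronskian.
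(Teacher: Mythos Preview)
Your proposal is correct and follows essentially the same argument as the paper: the paper's proof simply reads ``Combining Corollary~\ref{cor:wronski}~(vi) with \eqref{eq:3.22}, we complete the proof,'' which is exactly your specialization $i=n_l+1$, $j=0$ together with Definition~\ref{def_thetafrob}.
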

\begin{proof}
Combining Corollary \ref{cor:wronski} $(vi)$ with \eqref{eq:3.22}, we complete the proof.
\end{proof}

\section{Singular $m$-functions}
\label{sec:m-func}

\subsection{Some general facts}

Now let us look at perturbations
\be
H = H_l + q(x)
\ee
assuming that the potential $q$ satisfies the following conditions:

\begin{hypothesis}\label{hyp_q}
Let $l\in [-\frac{1}{2},\infty)$. Suppose $q\in L^1_{\mathrm{loc}}(\R_+)$ is real-valued such that
\be \label{f1}
\begin{cases} x\, q(x) \in L^1(0,1), & l>-\frac{1}{2},\\
x(1-\log(x)) q(x) \in L^1(0,1), & l = -\frac{1}{2}. \end{cases}
\ee
Moreover, assume that $\tau = \tau_l +q$ is limit point at $\infty$.
\end{hypothesis}

Under Hypothesis~\ref{hyp_q} the differential equation $H = H_l +q$ is limit circle at $x=0$ if $l\in[-\frac{1}{2},\frac{1}{2})$ and
limit point at $x=0$ for $l\geq \frac{1}{2}$.  In particular, $H$ associated with the boundary conditions at $x=0$
(for $l\in[-\frac{1}{2},\frac{1}{2})$)
\be
\lim_{x\to0} x^l ( (l+1)f(x) - x f'(x))=0
\ee
is self-adjoint by \cite[Thm.~2.4]{kst}. See also \cite{bg} for a characterization of all possible boundary conditions in terms of
Rellich's {\em Anfangszahlen}.

The results from the previous section also give us information on the associated scale of spaces.
We begin with characterizing the form domain of $H$.

\begin{lemma}
Suppose Hypothesis~\ref{hyp_q} holds. Assume additionally that $H$ is bounded from below.
The form domain of $H$ is given by
\be\label{eq:fdom}
\fdom(H) = \big\{ f \in L^2(\R_+) | f\in \mathrm{AC}(\R_+), \:  -f' + \frac{\phi'(\lam,.)}{\phi(\lam,.)} f \in L^2(\R_+)\big\}
\ee
for any $\lam$ below the spectrum of $H$. In particular, every $f\in \fdom(H)$ is of the form
\be
f(x) = x \ti{f}(x), \qquad \ti{f} \in L^2(0,1), \quad |\ti{f}(x)| \le \frac{const}{\sqrt{x}}.
\ee
\end{lemma}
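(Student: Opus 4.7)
The plan is to exploit the classical Sturm--Liouville factorization. Since $\lam$ is strictly below the spectrum of $H$, the equation $\tau u = \lam u$ is non-oscillatory; combined with $\phi(\lam,x)=x^{l+1}(1+o(1))>0$ near $0$ from Lemma~\ref{lem_ass}, Sturm comparison forces $\phi(\lam,\cdot)>0$ on all of $\R_+$. Setting $v:=f/\phi(\lam,\cdot)$, a direct computation gives
\be
(\tau-\lam)f=-\phi(\lam,\cdot)^{-1}\bigl(\phi(\lam,\cdot)^2 v'\bigr)',
\ee
and integration by parts on $C_c^\infty(\R_+)$ yields
\be
\spr{(H-\lam)f}{f}=\int_0^\infty \phi(\lam,x)^2|v'(x)|^2 dx=\int_0^\infty\Big|{-}f'(x)+\frac{\phi'(\lam,x)}{\phi(\lam,x)}f(x)\Big|^2 dx.
\ee
Closing this quadratic form in $L^2(\R_+)$ will then identify $\fdom(H)$ with the set on the right of \eqref{eq:fdom}: the limit-point condition at $+\infty$ and the fact that $\phi(\lam,\cdot)$ itself satisfies the boundary condition at $0$ when $l\in[-\frac12,\frac12)$ prevent any extra boundary constraints from appearing.

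Next, for the pointwise structure of $f\in\fdom(H)$ near $0$, writing $A_\lam f:={-}f'+(\phi'(\lam,\cdot)/\phi(\lam,\cdot))f=-\phi(\lam,\cdot)v'$, the condition $A_\lam f\in L^2$ becomes $y^{l+1}v'(y)\in L^2(0,1)$. Applying Cauchy--Schwarz to $v(x)-v(1)=-\int_x^1 v'(y)dy$ via the splitting $v'=y^{-l-1}\cdot(y^{l+1}v')$,
\be
|v(x)-v(1)|\le \|y^{l+1}v'\|_{L^2(x,1)}\Big(\int_x^1 y^{-2l-2}dy\Big)^{1/2}=o(x^{-l-1/2}),
\ee
where the prefactor tends to $0$ as $x\downarrow 0$. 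Since $\phi(\lam,x)\sim x^{l+1}$, this gives $|f(x)|=O(x^{l+1})+o(x^{1/2})=O(\sqrt{x})$ (using $l>-\frac12$), which is exactly the bound $|\ti f(x)|\le C/\sqrt{x}$ for $\ti f(x):=f(x)/x$.

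Finally, to promote this pointwise control to $\ti f\in L^2(0,1)$, the plan is to invoke the weighted Hardy inequality from Appendix~\ref{app:hi} with weight exponent $2l+2>1$:
\be
\int_0^1 x^{2l}|v(x)-v(1)|^2 dx\le C\int_0^1 x^{2l+2}|v'(x)|^2 dx<\infty.
\ee
Together with $\int_0^1 x^{2l}|v(1)|^2 dx<\infty$ (again because $l>-\frac12$), this yields $\int_0^1 x^{2l}v(x)^2 dx<\infty$, which, since $\phi(\lam,x)/x\sim x^l$ near $0$, amounts to $\int_0^1 |f(x)/x|^2 dx<\infty$. The main obstacle I expect is the density/closure step underlying the first paragraph: one must verify that every $f\in L^2(\R_+)$ with $f\in\mathrm{AC}(\R_+)$ and $A_\lam f\in L^2$ actually lies in $\fdom(H)$, and in particular satisfies the boundary condition defining $H$ in the limit-circle range $l\in[-\frac12,\frac12)$. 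This should follow because $\phi(\lam,\cdot)$ is the distinguished solution satisfying that boundary condition, so any $f=\phi(\lam,\cdot)v$ with $\phi(\lam,\cdot)v'\in L^2$ inherits the correct behavior at $x=0$.
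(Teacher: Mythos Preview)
Your approach is essentially the same as the paper's: both rest on the ground-state factorization through $\phi(\lam,\cdot)$ and then invoke the Hardy-type bounds for $\hat{\mathcal{K}}_l$ from Appendix~\ref{app:hi} to extract the behavior near $x=0$. The paper packages the first part as the operator identity $H-\lam=A^*A$ with $A=-\tfrac{d}{dx}+\phi'(\lam,\cdot)/\phi(\lam,\cdot)$ on the stated domain and then reads off $\fdom(H)=\dom(A)$ directly; this sidesteps the density/closure step you flag as the main obstacle, so you may find it cleaner than closing the form from $C_c^\infty$. For the second part, the paper writes the general solution of $A_\lam f=g$ explicitly as $f=c_1\phi+x\,\ti\phi\,\hat{\mathcal{K}}_l(g/\ti\phi)$ and applies \eqref{hi_02a} and \eqref{hi_sup2}, which is exactly your weighted Hardy and Cauchy--Schwarz arguments in integral-operator form.

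One small slip: in your pointwise estimate, the prefactor $\|y^{l+1}v'\|_{L^2(x,1)}$ does \emph{not} tend to $0$ as $x\downarrow 0$ (it increases to the full $L^2(0,1)$ norm), so as written you only get $O(x^{-l-1/2})$, not $o$. That is harmless since $O$ already yields $|f(x)|\le C\sqrt{x}$; if you really want $o$ you need the usual $\delta$-splitting of the integral.
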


\begin{proof}
Consider the operator $A = -\frac{d}{dx} + \phi'(\lam,.)/ \phi(\lam,.)$ which is a closed operator when defined on the domain given
on the right-hand side of \eqref{eq:fdom} (cf.~\cite[Problem~9.2]{tschroe}). Moreover, its adjoint is given by
$A^* = \frac{d}{dx} + \phi'(\lam,.)/ \phi(\lam,.)$ with domain
\begin{align*}
\dom(A^*) = \big\{ f \in L^2(\R_+) |& f\in \mathrm{AC}(\R_+), \:  f' + \frac{\phi'(\lam,.)}{\phi(\lam,.)} f \in L^2(\R_+),\\
&\lim_{x\to a,b} f(x) g(x) = 0, \: \forall g \in\dom(A)\big\}
\end{align*}
and hence one checks $H - \lam= A^* A$. In fact, the only nontrivial part is to identify the boundary condition at $a$ (if any).
However, since $\phi(\lam,.)$ is in the domain of $A^* A$ near $a$ by construction of $A$, equality of domain follows.
Consequently $\fdom(H)=\dom(A)$ finishing the first claim.

To prove the second claim let us consider the solution of the inhomogeneous equation
\be\label{eq:40.03}
-f'(x) + \frac{\phi'(\lam,x)}{\phi(\lam,x)} f(x) = g(x), \qquad g(x)\in L^2(0,1).
\ee
By Lemma~\ref{lem_ass}, $\phi(\lam,x)$ admits the representation
\[
\phi(\lam,x)=x^{l+1}\widetilde{\phi}(\lam,x),\quad \widetilde{\phi}(\lam,x)=\E^{\int_0^x w(y)dy},\quad w\in L^1(0,1).
\]
Therefore, $\phi'(\lam,x)/\phi(\lam,x)=\frac{l+1}{x} + w(x)$ and hence the solution of \eqref{eq:40.03} is given by
\[
f(x) = c_1 \phi(\lam,x) + x \ti{\phi}(\lam,x) \hat{\mathcal{K}}_l \Big(\frac{g}{\ti{\phi}(\lam)}\Big)(x),
\]
and \eqref{hi_02a} and \eqref{hi_sup1} complete the proof.
\end{proof}

Moreover, for the associated scale of spaces we obtain:

\begin{lemma} \label{lem:hrn}
Suppose Hypothesis~\ref{hyp_q} holds and $H$ is bounded from below.
Let $\hr_n$ be the scale of spaces associated with $H$ (cf.\ Appendix~\ref{app:ssp}). Then $f\in \hr_n$, $n\ge 0$, is of the form
\begin{align}
f(x) &= x^n \ti{f}(x), \qquad \ti{f} \in L^2(0,1), \quad 0 \le n \le \floor{l+1},\\
f'(x) &= x^{n-1} \hat{f}(x), \qquad \hat{f} \in L^2(0,1), \quad 2 \le n \le \floor{l+1}.
\end{align}
For $n \ge 1$ we have $|\ti{f}(x)| \le \frac{const}{\sqrt{x}}$ for $x\in(0,1)$ and for $n\ge 2$ we also have
$|\hat{f}(x)| \le \frac{const}{\sqrt{x}}$ for $x\in(0,1)$.

Moreover, any function of the form
\be
g(x) = x^{-n} \ti{g}(x), \qquad \ti{g}(x) \in L^2(0,1),\quad g(x) \in L^2(1,\infty),
\ee
lies in $\hr_{-n}$ for $ 0 \le n \le \floor{l+1}$.

If $H$ is not bounded from below the claim still holds for even $n$.
\end{lemma}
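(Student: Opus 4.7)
The plan is an induction on $n$ based on the recursive description of the scale: when $H$ is bounded below, $\hr_n=\dom\bigl((H-\lam)^{n/2}\bigr)$ for $\lam<\inf\sig(H)$, so $f\in\hr_{n+2}$ iff $f\in\hr_n$ and $g:=(H-\lam)f\in\hr_n$. Since the asymptotic claims concern only the behavior near $x=0$ (the requirement $f\in L^2(\R_+)$ automatically controls the tail at $+\infty$), at every step I only need to analyze the Green-function representation
\[
f(x)=c\,\phi(\lam,x)+\phi(\lam,x)\int_0^x\theta(\lam,y)g(y)\,dy-\theta(\lam,x)\int_0^x\phi(\lam,y)g(y)\,dy
\]
on $(0,1)$, using the asymptotics $\phi(\lam,x)=x^{l+1}\ti\phi$ and $\theta(\lam,x)=(2l+1)^{-1}x^{-l}\ti\theta$ from Lemma~\ref{lem_ass}, together with the Hardy-type estimates of Appendix~\ref{app:hi}, to extract a gain of a factor $x^2$ per iteration.

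The base cases are $n=0$ (trivial), $n=1$ (exactly the form-domain result just proved), and $n=2$: here $g\in L^2(0,1)$ and the two integrands are of type $y^{l+1}\ti\phi g$ and $(2l+1)^{-1}y^{-l}\ti\theta g$, so the integrals reduce to $x^{l+2}\mathcal K_{l+1}(\ti\phi g)(x)$ and a $\hat{\mathcal K}$-version near the upper limit plus a bounded contribution. After multiplying by $\phi$ or $\theta$, Lemmas~\ref{lem:Kl}/\ref{lem:KlW} together with \eqref{hi_sup1} furnish the decomposition $f=x^2\ti f$, $f'=x\hat f$ with $\ti f,\hat f\in L^2(0,1)$ and $|\ti f|,|\hat f|\le c/\sqrt x$. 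The inductive step from $n$ to $n+2$ is identical in spirit: by hypothesis $g=x^n\ti g$ with $\ti g\in L^2(0,1)$, the integrands become $y^{l+n+1}\ti\phi\ti g$ and $(2l+1)^{-1}y^{n-l}\ti\theta\ti g$, and the Hardy operators $\mathcal K_{l+n+1}$ and $\mathcal K_{n-l}$ (or their $\hat{\mathcal K}$ counterparts) are $L^2(0,1)$-bounded as long as both indices stay above $-\tfrac12$, i.e.\ as long as $n+2\le\floor{l+1}$, which is exactly the assumed range. Multiplying through by $\phi$ or $\theta$ restores the desired $x^{n+2}$ prefactor, and differentiating under the integral and using Corollary~\ref{cor_ass} for $\phi',\theta'$ yields the analogous decomposition of $f'$.

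For the dual scale, $\hr_{-n}$ is the completion of $L^2(\R_+)$ in the norm $\|(H-\lam)^{-n/2}\cdot\|_2$ and is canonically the dual of $\hr_n$. If $g=x^{-n}\ti g$ with $\ti g\in L^2(0,1)$ and $g\in L^2(1,\infty)$, then for every $f=x^n\ti f\in\hr_n$ produced in the first part, Cauchy--Schwarz gives
\[
|\langle g,f\rangle|\le \|\ti g\|_{L^2(0,1)}\|\ti f\|_{L^2(0,1)}+\|g\|_{L^2(1,\infty)}\|f\|_{L^2(1,\infty)}\le C\,\|f\|_{\hr_n},
\]
so $g$ defines an element of $\hr_n^*=\hr_{-n}$. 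When $H$ is not bounded from below, the even scale $\hr_{2k}=\dom(H^k)$ is still defined via the functional calculus, and the same ODE argument using complex $\lam\notin\sig(H)$ goes through verbatim; the odd scale requires $|H|^{k+1/2}$, whose action is incompatible with the differential structure, which is why only even $n$ survives. The main technical obstacle, as I see it, is the careful Hardy-estimate bookkeeping at each iterate: one must confirm that the $L^2$ endpoint (not merely some weaker $L^p$, $p<2$) is produced by every application of $\mathcal K_\ell$ and $\hat{\mathcal K}_\ell$, and this is exactly what pins down the sharp cut-off $n\le\floor{l+1}$.
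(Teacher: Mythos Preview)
Your approach is essentially the paper's: induction in steps of two via the Green-function representation of $(H-\lam)^{-1}$ near $x=0$, with the Hardy operators $\mathcal{K}_\ell$, $\hat{\mathcal{K}}_\ell$ supplying the $L^2$-boundedness and the pointwise bounds \eqref{hi_sup1}, \eqref{hi_sup2}. The paper simply packages the inductive step by quoting Lemmas~\ref{lem_5.4B} and \ref{lem_5.4} (which already contain the relevant Green-function computation and the correct choice between $\mathcal{K}$ and $\hat{\mathcal{K}}$), whereas you redo that calculation inline. One small bookkeeping remark: in the range $n+2\le\floor{l+1}$ one has $n-l<-\tfrac12$, so $\mathcal{K}_{n-l}$ is \emph{never} the right operator for the $\theta$-integral; it is always the $\hat{\mathcal{K}}_{l-n-1}$ version (bounded on $L^2$ precisely because $l-n-1>-\tfrac12$) that does the job, as in the paper's Lemma~\ref{lem_5.4}.

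For the dual-scale part your Cauchy--Schwarz line needs one more sentence: the inequality $\|\ti f\|_{L^2(0,1)}+\|f\|_{L^2(1,\infty)}\le C\|f\|_{\hr_n}$ is not automatic from the mere \emph{existence} of the decomposition $f=x^n\ti f$; you need that the map $\hr_n\ni f\mapsto \ti f\in L^2(0,1)$ is continuous. The paper makes exactly this point, observing that convergence $f_j\to f$ in $\hr_n$ forces $\ti f_j\to\ti f$ in $L^2(0,1)$ because the inductive construction is built from the $L^2$-bounded operators $\mathcal{K}_\ell$, $\hat{\mathcal{K}}_\ell$. Once you note this, your duality argument is complete and matches the paper's.
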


\begin{proof}
The first part follows from induction using Lemma~\ref{lem_5.4} (resp.\ Lemma~\ref{lem_5.4B}) starting from $\hr_0=L^2(\R_+)$
for the case of even $n$ and from $\hr_1=\fdom(H)$ for the case of odd $n$. The estimates for $\ti{f}$ and $\hat{f}$ follow
similarly using \eqref{hi_sup1}, \eqref{hi_sup2}.

To see the second part note that when $f_j(x) = x^n \ti{f}_j(x) \to f(x) = x^n \ti{f}(x)$ in $\hr_n$ then $\ti{f}_j(x) \to \ti{f}(x)$ in $L^2(0,1)$
and $f_j(x) \to f(x)$ in $L^2(1,\infty)$. The second claim is obvious and the first follows by inspection of the proof of Lemma~\ref{lem_5.4}
since the operators $\mathcal{K}_\ell$ and $\hat{\mathcal{K}}_\ell$ are continuous on $L^2(0,1)$. Hence it is easy to see that the linear
functional $f \mapsto \int_{\R_+} g(x) f(x) dx$ is continuous on $\hr_n$.
\end{proof}

Following \cite{gz}, we define $M(\cdot)$, the singular $m$-function for $\tau$, by
\begin{equation}\label{m-tau1}
\psi(z,x)=\theta(z,x)+M(z)\phi(z,x)\in L^2(1,+\infty),\quad z\in\C_+,
\end{equation}
where $\phi$ and $\theta$ are the entire solutions from Lemma~\ref{lem_ass}.

Let us recall some general facts from \cite{kst2}. First of all, associated with $M(z)$ is a spectral measure
$d\rho(\lam)$ and a unitary transform $U: L^2(\R_+) \to L^2(\R,d\rho)$ which maps $H$ to multiplication by the independent variable $\lam$. Both
$H$ and $U$ have unique extensions to the scale of spaces associated with $H$ (cf.\ Appendix~\ref{app:ssp}) which will be denoted by $\widetilde{H}$ and
$\widetilde{U}$, respectively. Moreover, recall
\be
(H-z)^{-1} f(x) = \int_0^\infty G(z,x,y) f(y) dy,
\ee
where
\be\label{defgf}
G(z,x,y) = \begin{cases} \phi(z,x) \psi(z,y), & y\ge x,\\
\phi(z,y) \psi(z,x), & y\le x,\end{cases}
\ee
is the Green function of $H$.

\begin{lemma}\label{lem:sing_pert}
Assume Hypothesis \ref{hyp_q} and let $\psi(z,.)$ be the Weyl solution defined by \eqref{m-tau1}. Abbreviate
\be
\psi^{(j)}(z,x) = \partial_z^j \psi(z,x),\qquad  j\in\N_0.
\ee
Then $\psi^{(j)}(z,.) \in \hr_{-n_l+2j} \setminus \hr_{-n_l+2j+1}$ with $n_l=\floor{l+\frac{1}{2}}$ and
\be\label{Upsij}
(\widetilde{U} \psi^{(j)}(z,.))(\lam) = \frac{j!}{(\lam-z)^{j+1}}, \qquad z\in\C\setminus\sig(H).
\ee
In particular,
\[
\psi^{(j)}(z,x)=  j! (\widetilde{H}-z)^{-j}\psi(z,x),\qquad j\in \N_0,
\]
and the distribution
\be\label{eq:phi_sing}
\varphi(x):= (\widetilde{H} -z)\psi(z,x)\in \hr_{-n_l-2}\setminus\hr_{-n_l-1}
\ee
does not depend on $z$, $(\widetilde{U} \varphi)(\lam)\equiv 1$.
\end{lemma}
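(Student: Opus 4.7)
The plan is to pass through the spectral transform, starting from the identity $(\widetilde U\psi(z,\cdot))(\lam)=(\lam-z)^{-1}$, which is a standard Weyl--Titchmarsh fact in this setting (see \cite{kst2}). First I would derive the pointwise relation $(\tau-z)\psi^{(j+1)}(z,x)=(j+1)\psi^{(j)}(z,x)$ by differentiating the eigenvalue equation $(\tau-z)\psi(z,x)=0$ in $z$. Once each $\psi^{(j)}$ is placed in some space of the scale (which it is, by the analysis below), this pointwise relation lifts to the operator relation $(\widetilde H-z)\psi^{(j+1)}=(j+1)\psi^{(j)}$, and iteration yields $\psi^{(j)}=j!(\widetilde H-z)^{-j}\psi$. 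Applying $\widetilde U$ and using the intertwining $\widetilde U(\widetilde H-z)^{-1}=(\lam-z)^{-1}\widetilde U$ then delivers \eqref{Upsij}. In particular $\widetilde U\varphi=(\lam-z)\cdot(\lam-z)^{-1}\equiv 1$, which is manifestly $z$-independent, so the distribution $\varphi=(\widetilde H-z)\psi$ itself is $z$-independent.

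To pinpoint the scale index I would invoke the unitary identification $\widetilde U:\hr_n\to L^2(\R,d\rho;(1+\lam^2)^{n/2})$ coming from the extended spectral theorem. Membership $\psi^{(j)}\in\hr_n$ then amounts to convergence of
\[
\int_\R\frac{(1+\lam^2)^{n/2}}{|\lam-z|^{2(j+1)}}\,d\rho(\lam).
\]
Off $\sig(H)$ the integrand is locally bounded, so only the behaviour of $\rho$ as $\lam\to+\infty$ is at stake. I would supply the two-sided asymptotic $d\rho(\lam)\sim c\,\lam^{l+1/2}\,d\lam$ by comparing $M(z)$ with $M_l(z)$ at large $|z|$ via the estimates of Lemma~\ref{lemphi}. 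Power counting then shows the integral converges precisely when $n<2j+\frac{1}{2}-l$; by definition of $n_l=\floor{l+\frac{1}{2}}$ the largest integer satisfying this strict inequality is $-n_l+2j$, which yields both $\psi^{(j)}\in\hr_{-n_l+2j}$ and the sharp non-membership $\psi^{(j)}\notin\hr_{-n_l+2j+1}$. The same computation applied to $\widetilde U\varphi\equiv 1$ shows $\int(1+\lam^2)^{-n/2}d\rho<\infty$ iff $n>l+\frac{3}{2}$, so the least admissible integer is $n_l+2$, producing $\varphi\in\hr_{-n_l-2}\setminus\hr_{-n_l-1}$.

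The main obstacle is the lower half of the high-energy asymptotic of $d\rho$: an upper bound $d\rho(\lam)\le C\lam^{l+1/2}d\lam$ is immediate from the Lemma~\ref{lemphi} estimates, but the matching lower bound --- which is exactly what forces the non-membership halves of the scale statements --- requires showing that the perturbation $q$ does not cancel the leading $\lam^{l+1/2}$ growth of $\im M_l(\lam+\I 0)$. I would handle this by writing $M=M_l+(M-M_l)$ and bounding $M-M_l$ as a strictly lower-order correction at large $|z|$, using the uniform control afforded by the estimates \eqref{estphi}, \eqref{estphi'} of Lemma~\ref{lemphi} for both $\phi$ and (analogously) $\theta$.
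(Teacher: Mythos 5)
Your strategy is genuinely different from the paper's: you want to prove \eqref{Upsij} first and then read off all the scale memberships from weighted $L^2(d\rho)$ conditions plus a two-sided high-energy asymptotic of $\rho$, whereas the paper argues locally at $x=0$ — the memberships come from the Frobenius-type asymptotics $\psi^{(j)}(z,x)=\mathrm{const}\cdot x^{-l+2j}(1+o(1))$ (Lemmas~\ref{lem_5.5} and \ref{lem:theta_fr}) combined with the characterization of the scale $\hr_n$ in Lemma~\ref{lem:hrn}, \eqref{Upsij} is obtained by a Wronskian limit (Corollary~\ref{cor:wronski}) for $j\ge\gk_l$ and by an integration-by-parts/duality argument for smaller $j$, and the sharp index is decided by the finiteness of $\int_0^\infty\psi^{(\gk_l)}\psi^{(\gk_l-1)*}dx$. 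The trouble is that your central quantitative input, the two-sided bound $d\rho(\lam)\sim c\,\lam^{l+1/2}d\lam$ (at best available in integrated form), is not obtainable from Lemma~\ref{lemphi}: that lemma only controls $\phi-\phi_l$, the paper contains no high-energy estimates whatsoever for $\theta$ or $M$, and producing them is substantially more delicate than for $\phi$. Moreover the quantity you propose to bound, $M-M_l$, is not even well defined independently of normalization: replacing $\theta$ by $\theta+F(z)\phi$ with an arbitrary real entire $F$ (cf.\ \eqref{eq_5.14t}, \eqref{m-tau1}) replaces $M$ by $M-F$, so ``$M-M_l$ is of strictly lower order'' is false for a general admissible $\theta$; only $\im M(\lam+\I 0)$ is canonical, and passing from its behaviour to the growth of $\rho$ requires an additional Tauberian-type step. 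In effect you are assuming the moment conditions \eqref{eq:spmes_mti_2}, which in the paper are a \emph{consequence} of this lemma, as an input that is harder to establish than the lemma itself.

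Two further gaps. First, there is a circularity: to apply $\widetilde U$ and $(\widetilde H-z)^{-1}$ to $\psi^{(j)}(z,\cdot)$ you must first place these distributions in some $\hr_{-n}$, and your ``standard fact'' $(\widetilde U\psi(z,\cdot))(\lam)=(\lam-z)^{-1}$ is precisely the $j=0$ case of \eqref{Upsij}; the paper gets both from the local analysis (Lemma~\ref{lem:hrn} together with the Wronskian computation), and your membership argument cannot borrow them since it presupposes \eqref{Upsij}. Second, Hypothesis~\ref{hyp_q} does not assume $H$ to be bounded from below, so $\rho$ may have unbounded support on the negative axis; your reduction to ``only $\lam\to+\infty$ matters'' is unjustified, because the contribution of the negative spectrum to $\int(1+|\lam|)^{-n_l-2}d\rho$ must also be controlled, and a comparison with $M_l$ at high positive energy says nothing about it. The paper's local-in-$x$ argument is immune to both issues, which is exactly what that approach buys.
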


\begin{proof}
We begin by observing that Lemma~\ref{lem_5.5} and Lemma~\ref{lem:theta_fr} imply
\be\label{asympsij}
\psi^{(j)}(z,x) = \begin{cases}
\frac{C^\theta_{l,j}}{2l+1} x^{-l+2j} (1+ o(1)), & j \ne l+1/2,\\
\frac{\hat{C}^\theta_{l,j}}{2l+1} x^{-l+2j} (\log(x)+ o(1)), &  j = l+1/2,
\end{cases}
\ee
for $j\le n_l$. Moreover, choosing $f(x)\in L^2(\R_+)$ with  compact support in $(0,1)$ we have
\[
\hat{\psi}(z,x) :=  (H-z)^{-1} f(x) = \left( \int_0^1 \phi(z,y) f(y) \right) \psi(z,x), \qquad x\ge 1.
\]
Since $\hat{\psi}(z,x)$ and all its $z$ derivatives are in $L^2(\R_+)$ we conclude that
$\psi^{(j)}(z,.) \in L^2(1,\infty)$. Thus Lemma~\ref{lem:hrn} shows  $\psi^{(j)}(z,.) \in \hr_{2(j-\gk_l)}\setminus \hr_{2(j-\gk_l+1)}$
for $j \le n_l$, where $\gk_l:= \floor{\frac{l}{2}+\frac{3}{4}}$.

Moreover, from \cite[Cor.~3.7]{kst2} we know
\be
(U \partial_z^j G(z,x,.))(\lam) = \frac{j! \phi(\lam,x)}{(\lam-z)^{j+1}},\qquad
(U \partial_z^j \partial_x G(z,x,.))(\lam) = \frac{j! \phi'(\lam,x)}{(\lam-z)^{j+1}}
\ee
for every $x\in(a,b)$, $k\in\N_0$, and every $z\in\C\setminus\sig(H)$. Hence for $j\ge \gk_l$ we obtain
\[
(U \ti{\psi}^{(j)}(z,x,.))(\lam) =  \frac{j!}{(\lam-z)^{j+1}} W_x(\theta(z),\phi(\lam)),
\]
where
\[
\ti{\psi}^{(j)}(z,x,y) = \begin{cases} \sum_{k=0}^j {j \choose k} \psi^{(j-k)}(z,y) W_x(\theta(z),\phi^{(k)}(z)), & y > x,\\
\sum_{k=0}^j {j \choose k} \phi^{(j-k)}(z,y) W_x(\theta(z),\psi^{(k)}(z)), & y < x.
\end{cases}
\]
Now \eqref{Upsij} for $j\ge \gk_l$ follows by letting $x\to 0$ using Corollary~\ref{cor:wronski}. To see it for $0\le j < \gk_l$
we will show $\psi^{(j-1)}(z) = \frac{1}{j}(\widetilde{H}-z) \psi^{(j)}(z)$ for $0<j\le \gk_l$. Choose $f\in \hr_{2(\gk_l-j)}$, then
\begin{align*}
\dpr[L^2]{(\widetilde{H}-z) \psi^{(j)}(z)}{f} &= \dpr[L^2]{\psi^{(j)}(z)}{ (H-z^*) f}\\
&= \lim_{\eps\downarrow 0} \int_\eps^\infty \psi^{(j)}(z,x) (\tau-z) f(x)^* dx\\
&= \lim_{\eps\downarrow 0} W_\eps(\psi^{(j)}(z),f^*) + \lim_{\eps\downarrow 0} \int_\eps^\infty j\psi^{(j-1)}(z,x) f(x)^* dx,
\end{align*}
where we have used integration by parts and $(\tau-z) \psi^{(j)}(z) = j\psi^{(j-1)}(z)$. Now alluding to \eqref{asympsij}
(and the corresponding statement for the $x$ derivative with $x^{-l+2j}$ replaced by $x^{-l+2j-1}$) and Lemma~\ref{lem:hrn}
we see that the Wronskian vanishes in the limit and that the second limit exists, that is,
\begin{align*}
\dpr[L^2]{(\widetilde{H}-z) \psi^{(j)}(z)}{f} &=  \int_0^\infty j\psi^{(j-1)}(z,x) f(x)^* dx,
\end{align*}
which shows \eqref{Upsij}.

Finally, to decide if $\psi^{(j)}(z,.) \in \hr_{2(j-\gk_l)}\setminus \hr_{2(j-\gk_l)+1}$ or $\psi^{(j)}(z,.) \in \hr_{2(j-\gk_l)+1}\setminus \hr_{2(j-\gk_l+1)}$
we consider the following integral
\[
I_{\gk_l}(z):=\int_0^\infty \psi^{(\gk_l)}(z,x)\ \psi^{(\gk_l-1)}(z,x)^* dx
\]
and recall $\psi^{(\gk_l-1)}(z,.) \in \hr_{-2} \setminus \hr_{0}$ and $\psi^{(\gk_l)}(z,.) \in \hr_{0} \setminus \hr_{2}$ plus
$\psi^{(j)}(z,.)\in L^2(1,+\infty)$ for all $j\in \N_0$. Moreover, by \eqref{asympsij}, we see that
$I_{\gk_l}(z)$ is finite if and only if $n_l=2\gk_l-1$. Since $\psi^{(\gk_l)}(z,x)=(\widetilde{H}-z)^{-1}\psi^{(\gk_l-1)}(z,x)$, the latter
means that $\psi^{(\gk_l-1)}(z,.) \in \hr_{-1} \setminus \hr_{0}$ if and only if $n_l=2\gk_l-1$. Otherwise, we get
$\psi^{(\gk_l-1)}(z,.) \in \hr_{-2} \setminus \hr_{-1}$. This completes the proof.
\end{proof}

\subsection{Main Theorem}

The main aim of this section is to show that the solution $\theta(z,x)$ can be chosen such that $M(z)$ belongs to the generalized
Nevanlinna class. To this end, let $\psi(\I,x)$ be the Weyl solution defined by \eqref{m-tau1} and introduce the function
\be\label{eq:def_mti}
\widetilde{M}(z)=\begin{cases}
\frac{(z^2+1)^{\gk_l}}{(\gk_l-1)!^2}\dpr[L^2]{\psi^{(\gk_l-1)}(\I)}{(\widetilde{H}-z)^{-1}\psi^{(\gk_l-1)}(\I)}, & n_l=2\gk_l-1,\\
\frac{(z^2+1)^{\gk_l}}{(\gk_l-1)!^2}\dpr[L^2]{\psi^{(\gk_l-1)}(\I)}{\big((\widetilde{H}-z)^{-1}-\mathcal{R}\big)\psi^{(\gk_l-1)}(\I)}, & n_l=2\gk_l,
\end{cases}
\ee
which is well defined for $z\in\C\setminus\sig(H)$ by Lemma~\ref{lem:sing_pert}.
Here $\mathcal{R}:=\re \big((\widetilde{H}-\I)^{-1}\big)=\frac{1}{2}\big((\widetilde{H}-\I)^{-1}+(\widetilde{H}+\I)^{-1}\big)$ and the inner product in \eqref{eq:def_mti} is understood as a pairing between $\hr_{-1}$ and $\hr_{1}$ or between $\hr_{-2}$ and $\hr_{2}$, respectively.
In the case $n_l=\gk_l=0$ (i.e., $l<\frac{1}{2}$), one has to set $\psi^{(-1)}(z) = \varphi = (\widetilde{H} -z)\psi(z)$.

Clearly, $\widetilde{M}(z)$ is a generalized Nevanlinna function and $\widetilde{M}\in N_{\gk_l}$. Moreover, by Lemma~\ref{lem:sing_pert}
\be\label{eq:spmes_mti}
\widetilde{M}(z)=\begin{cases}
\displaystyle (1+z^2)^{\gk_l}\int_\R \frac{1}{\lam - z} \frac{d\rho(\lam)}{(1+\lam^2)^{\gk_l}}, & n_l=2\gk_l-1,\\
\displaystyle (1+z^2)^{\gk_l}\int_\R \Big(\frac{1}{\lam - z}-\frac{\lam}{1+\lam^2}\Big)\frac{d\rho(\lam)}{(1+\lam^2)^{\gk_l}}, &n_l=2\gk_l,
\end{cases}
\ee
where $\rho$ is the spectral measure satisfying
\be\label{eq:spmes_mti_2a}
\int_\R\frac{d\rho(\lam)}{(1+|\lam|)^{2\gk_l+2}}<\infty.
\ee
Moreover, by Lemma \ref{lem:sing_pert} and \eqref{eq:phi_sing}, the representation \eqref{eq:def_mti} yields the following estimate for the measure
\be\label{eq:spmes_mti_2}
\int_\R\frac{d\rho(\lam)}{(1+|\lam|)^{n_l+2}}<\infty,\qquad \int_\R\frac{d\rho(\lam)}{(1+|\lam|)^{n_l+1}}=\infty.
\ee
Also, for the singular $m$-function $M(z)$ and the entire function $F(z)$ given in Lemma~\ref{lem_5.5} let us define the polynomials $P_M(z)$ and $P_F(z)$ of order at most $n_l$ by
\be\label{eq:pol_f}
P_f(z^*)=P_f(z)^*,\qquad P_f^{(j)}(\I)=f^{(j)}(\I),\quad j\in \{0,\dots, \floor{\frac{n_l}{2}}\},\quad f\in\{M,F\}.
\ee
With this notation our main result reads as follows:

\begin{theorem}\label{th:main}
Assume Hypothesis \ref{hyp_q}. Let the functions $M(z)$ and $\widetilde{M}(z)$ be defined by \eqref{m-tau1} and \eqref{eq:def_mti}, respectively. Then
\be\label{eq:m=ti_m}
M(z)=\widetilde{M}(z)+P_M(z)+G(z),\qquad z\in\C\setminus\rho(H),
\ee
where $G(z)=F(z)-P_F(z)$ and $F(z)$ is the entire function given in Lemma \ref{lem_5.5}.

The singular $m$-function $M(z)$ is a generalized Nevanlinna function from the class $N_{\gk_l}^\infty$ with $\gk_l=\floor{\frac{l}{2}+\frac{3}{4}}$ if and only if $F(z)$ is a real polynomial $\sum_{j=0}^m a_mz^m$ such that $m=2\gk_l+1$ and $a_m\ge 0$.
In particular, $M\in N_{\gk_l}^\infty$ if $\theta(z,x)$ is a Frobenius type solution.

The corresponding spectral measure $d\rho$ is given by \eqref{eq:spmes_mti} and satisfies \eqref{eq:spmes_mti_2}.
\end{theorem}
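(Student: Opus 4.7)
The plan is structured around the three parts of the statement: the decomposition \eqref{eq:m=ti_m}, the spectral representation \eqref{eq:spmes_mti} together with the integrability \eqref{eq:spmes_mti_2}, and the $N_{\gk_l}^\infty$-class criterion. I would first establish the spectral representation, as it is the most direct step: by Lemma~\ref{lem:sing_pert}, $(\widetilde{U}\psi^{(\gk_l-1)}(\I))(\lam)=(\gk_l-1)!(\lam-\I)^{-\gk_l}$, and inserting this into \eqref{eq:def_mti} via the spectral theorem yields \eqref{eq:spmes_mti} in both parity cases; in the even case the $\cR$-subtraction produces exactly the $\lam/(1+\lam^2)$ regulator in the integral. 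The integrability \eqref{eq:spmes_mti_2a} is then a Parseval computation using $\psi^{(\gk_l-1)}(\I)\in\hr_{-1}$ (odd $n_l$) or $\hr_{-2}$ (even $n_l$), and the sharpness in \eqref{eq:spmes_mti_2} reflects that $\psi^{(\gk_l-2)}(\I)$ lies in a strictly larger negative-order space, as isolated at the end of the proof of Lemma~\ref{lem:sing_pert}. As a byproduct one reads off $\widetilde{M}\in N_{\gk_l}$.

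To prove \eqref{eq:m=ti_m} I would first show $M(z)-\widetilde{M}(z)$ is entire and then pin down its Taylor data at $z=\I$. Both functions are analytic on $\C\setminus\sig(H)$, and from \eqref{eq:spmes_mti} the boundary value $\frac{1}{\pi}\im\widetilde{M}(\lam+\I 0)=\rho'(\lam)$ coincides with the Stieltjes inversion formula for $M$, since the prefactor $(1+z^2)^{\gk_l}$ is real on $\R$ and precisely cancels the weight in the integrand; hence $M-\widetilde{M}$ has no jump across $\R$ and extends entirely by Schwarz reflection. Next, $(1+z^2)^{\gk_l}$ vanishes to order $\gk_l$ at $z=\I$, so $\widetilde{M}^{(j)}(\I)=0$ for $j<\gk_l$, and in the even case $n_l=2\gk_l$ the $\cR$-subtraction is arranged precisely so that $((\widetilde{H}-\I)^{-1}-\cR)\psi^{(\gk_l-1)}(\I)=\I\,\im(\widetilde{H}-\I)^{-1}\psi^{(\gk_l-1)}(\I)$ makes the $\gk_l$-th Taylor coefficient vanish as well. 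Therefore the Taylor coefficients of $M-\widetilde{M}$ at $\I$ up to order $\floor{n_l/2}$ agree with those of $M$ itself and thus define $P_M(z)$. The remaining entire correction must then be $G(z)=F(z)-P_F(z)$; for this I would use the key observation that replacing $\theta$ by $\theta-F\phi$ in \eqref{m-tau1} shifts $M$ by $+F$, so the non-canonical freedom in the choice of $\theta$ encoded by $F$ enters $M$ exactly through an entire additive term whose polynomial part up to order $\floor{n_l/2}$ is already absorbed into $P_M$ via $P_F$.

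For the $N_{\gk_l}^\infty$ criterion I would invoke the characterization from Appendix~\ref{app:nkappa}: such a function belongs to $N_\kappa^\infty$ iff along nontangential rays in $\C_+$ the ratio $M(z)/z^{2\kappa+1}$ tends to a finite nonnegative real limit. From \eqref{eq:m=ti_m} and \eqref{eq:spmes_mti_2} one has $\widetilde{M}(\I y)=o(y^{2\gk_l+1})$ as $y\to+\infty$, while $P_M$ contributes only $O(y^{n_l})=O(y^{2\gk_l})$, so the leading behavior of $M(\I y)$ at order $y^{2\gk_l+1}$ comes exclusively from $G=F-P_F$. Requiring this limit to exist and be nonnegative forces $F$ to be a real polynomial of degree at most $2\gk_l+1$ with nonnegative leading coefficient, which is exactly the stated characterization; the Frobenius corollary then follows since for a Frobenius-type $\theta$, $F$ is a polynomial of degree at most $n_l\le 2\gk_l<2\gk_l+1$, so its $z^{2\gk_l+1}$-coefficient vanishes and the condition is automatically satisfied. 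The hardest step is the identification of the entire correction $M-\widetilde{M}-P_M$ with $G$, especially in the even case $n_l=2\gk_l$ where the $\cR$-regularization must neutralize both the spectral divergence at infinity and the $\gk_l$-th Taylor coefficient at $\I$ simultaneously; Corollary~\ref{cor:wronski} supplies the Wronskian identities needed to transport the entire shift $F$ cleanly through the Green-function representation of the resolvent underlying \eqref{eq:def_mti}.
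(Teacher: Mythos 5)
The decisive gap is in your route to \eqref{eq:m=ti_m}, where your plan diverges from the paper and does not close. First, entirety of $M-\widetilde M$ via ``equal Stieltjes data plus Schwarz reflection'' is not a proof: both functions are only known to be analytic on $\C\setminus\sig(H)$, and equality of the measures produced by Stieltjes inversion does not yield analytic continuation across $\sig(H)$ without a priori control of $M$ near the real axis or an integral representation of $M$ in terms of $\rho$ --- but such a representation (membership in some $N_\kappa$) is precisely what the theorem is meant to establish, so the argument is circular as stated. Second, your Taylor computation at $z=\I$ fails in the even case: for $n_l=2\gk_l$ one has $(\widetilde{H}-\I)^{-1}-\cR=\I(\widetilde{H}^2+1)^{-1}$, hence $\widetilde M^{(\gk_l)}(\I)=\gk_l!\,(2\I)^{\gk_l}\,\I\int_\R(1+\lam^2)^{-\gk_l-1}d\rho(\lam)\neq 0$; the $\cR$-subtraction does \emph{not} kill the $\gk_l$-th coefficient, so your prescription of $P_M$ by matching Taylor data of $M-\widetilde M$ at $\I$ breaks down there. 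Third, the identification of the remaining entire term with $F-P_F$ is only asserted: the observation that $\theta\mapsto\theta-F\phi$ shifts $M$ by $F$ merely reduces matters to the Frobenius-normalized case, and the base case --- that for that normalization $M-\widetilde M$ is a polynomial with the stated data --- is exactly the hard analytic content. The paper obtains it by writing $\widetilde M(z)=\lim_{x\to 0}Q_{n_l}(z,x)$ with the regularized integrals \eqref{eq:41.07}, \eqref{eq:43.09}, \eqref{eq:44.09}, converting these integrals into Wronskians at $x$ through $\psi^{(j)}(z)=j!(\widetilde H-z)^{-j}\psi(z)$, and evaluating the $x\to 0$ limits with the Frobenius representations of Lemmas~\ref{lem_5.5} and \ref{lem:theta_fr} together with Corollary~\ref{cor:wronski}; this is where $F$ enters explicitly, and nothing in your outline substitutes for that computation.

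Two further points. Your criterion for $N_\kappa^\infty$ via nontangential ray limits is only the multiplicity test for a function already known to be a generalized Nevanlinna function; to get the ``only if'' direction (that $F$ must be a real polynomial of the stated degree and sign) you need the representation Theorem~\ref{th:gnf} combined with the sharpness statement \eqref{eq:spmes_mti_2}, which guarantees that $k=\gk_l$ is minimal, exactly as the paper argues via \eqref{eq:b.08}; an entire non-polynomial $G$ can be small along the imaginary axis, so ray asymptotics alone cannot exclude it. On the positive side, your derivation of \eqref{eq:spmes_mti} and \eqref{eq:spmes_mti_2a}--\eqref{eq:spmes_mti_2} from Lemma~\ref{lem:sing_pert} is correct and coincides with the paper's treatment of that part.
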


In order to prove this theorem we will distinguish the cases when $n_l$ is even and odd. Moreover, to make the proof more
transparent we will show the first cases $n_l=1$ and $n_l=2$ separately. The case $n_l=0$ already follows from \cite[Appendix~A]{kst2}
and will thus not be considered here.

\subsubsection{Step 1. The case $n_l=1$}

First, observe that $\gk_l=1$ since $l\in [1/2, 3/2)$.
Therefore, by  Lemma~\ref{lem:sing_pert}
\be\label{eq:41.01}
\psi(z,.)\in \hr_{-1}\setminus\hr_0,\qquad z\in\C\setminus\sig(H).
\ee

The latter enables us to introduce the function $\widetilde{M}(z)$ by \eqref{eq:def_mti}. Thus we get
\be\label{sp_mfunct}
\widetilde{M}(z)=(z^2+1) \dpr[L^2]{\psi(\I)}{(\widetilde{H}-z)^{-1}\psi(\I)}.
\ee

\begin{lemma}\label{lem_m=m}
Let $l\in [1/2, 3/2)$ and assume Hypothesis \ref{hyp_q}. Let the functions $M$ and $\widetilde{M}$ be defined by \eqref{m-tau1} and \eqref{sp_mfunct}, respectively. Then
\be\label{eq_connection_l=1}
M(z)=\widetilde{M}(z) +\im M(\I)\cdot z+\re M(\I)+ G(z), \qquad z\in\C\setminus\sig(H),
\ee
where the function $G$ is entire. Moreover, $G(z)=F(z)-\im F(\I)\cdot z-\re F(\I)$, where the function $F(z)$ is given by \eqref{eq_5.14t}.
\end{lemma}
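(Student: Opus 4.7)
The approach is to compare $M(z)$ and $\widetilde{M}(z)$ through a boundary-Wronskian computation at $x=0$. By Lemma~\ref{lem:sing_pert} and the resolvent identity, $(\widetilde{H}-z)^{-1}\psi(\I) = (\psi(z)-\psi(\I))/(z-\I)\in\hr_1$, so that
\[
\widetilde{M}(z) = (z+\I)\langle\psi(\I),\psi(z)-\psi(\I)\rangle = (z+\I)\int_0^\infty\psi(-\I,x)(\psi(z,x)-\psi(\I,x))\,dx,
\]
the integral converging at $0$ because the leading $x^{-l}/(2l+1)$-singularities of $\psi(z,x)$ and $\psi(\I,x)$ cancel (leaving an $O(x^{-2l+2})$ integrand, integrable for $l<3/2$) and at $+\infty$ by the $L^2$ decay of $\psi(\pm\I,\cdot)$ and $\psi(z,\cdot)$.

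Next, apply the Lagrange identity $(z_u-z_v)\int_\eps^\infty uv\,dx = -W_\eps(u,v)$ (the boundary term at $+\infty$ vanishes by the limit-point hypothesis) separately to $\psi(-\I)\psi(z)$ and $\psi(-\I)\psi(\I)$. Although each boundary Wronskian diverges as $\eps\downarrow 0$, the combination
\[
\widetilde{M}(z) = \lim_{\eps\downarrow 0}\Bigl[W_\eps(\psi(-\I),\psi(z)) - \tfrac{z+\I}{2\I}W_\eps(\psi(-\I),\psi(\I))\Bigr]
\]
is finite. Expanding $\psi = \theta + M\phi$ and the Frobenius decomposition $\theta = \theta_F + F\phi$ from Lemma~\ref{lem:theta_fr}, items (ii)--(iii) of Corollary~\ref{cor:wronski} reduce the $\theta_F$-$\phi$ and $\phi$-$\phi$ Wronskians to $\pm 1$ and $0$ respectively, producing the finite terms involving $M(z),M(\pm\I),F(z),F(\pm\I)$. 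The remaining $\theta_F$-$\theta_F$ contribution is handled via the factorization $\theta_F(z,x)-\tfrac{z+\I}{2\I}\theta_F(\I,x) = (z-\I)\,\hat{\theta}(z,x)$ (verified directly from the series $\theta_F(z,x) = \sum_k (z-\I)^k\theta_F^{(k)}(\I,x)/k!$), whose Wronskian with $\theta_F(-\I,\cdot)$ admits a finite limit at $x=0$ that vanishes identically by Corollary~\ref{cor:wronski}(v)--(vi) applied to $\theta_F$ (for which $F_{\theta_F}\equiv 0$).

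Collecting all contributions and using the reality relations $M(-\I)=M(\I)^*$, $F(-\I)=F(\I)^*$ to simplify
\[
M(-\I) + \tfrac{z+\I}{2\I}(M(\I)-M(-\I)) = \re M(\I) + z\,\im M(\I) = P_M(z),
\]
and likewise for $F$, one arrives at \eqref{eq_connection_l=1} with $G(z) = F(z) - \im F(\I)\,z - \re F(\I)$ manifestly entire. \textbf{The main obstacle} is the third step: the individual $\theta_F$-$\theta_F$ Wronskians at $x=0$ diverge like $x^{-2l+1}$, and only the precise cancellation encoded in items (v)--(vi) of Corollary~\ref{cor:wronski}, which itself rests on the Frobenius representation of Lemma~\ref{lem:theta_fr}, forces the correction $G$ to depend solely on the non-polynomial part of $F$; without the Frobenius structure, this cancellation would fail and $G$ would not be entire.
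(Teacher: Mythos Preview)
Your approach is essentially the paper's: rewrite $\widetilde{M}(z)$ as the boundary value of a Wronskian combination via the Lagrange identity, decompose $\psi=\theta+M\phi$, and use Corollary~\ref{cor:wronski} to pick off the $M$- and $F$-contributions. The Wronskian formula you reach,
\[
\widetilde{M}(z)=-\lim_{x\to0}W_x\Bigl(\psi(z)-\tfrac{z+\I}{2\I}\psi(\I),\,\psi(-\I)\Bigr),
\]
is exactly the paper's \eqref{eq:41.09}, and your treatment of the $\phi$--$\phi$ and $\theta$--$\phi$ cross terms via items (ii)--(iii) is correct.

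There is, however, a genuine gap in the $\theta_F$--$\theta_F$ step. You expand $\theta_F$ around $z_0=\I$ and write $\theta_F(z)-\tfrac{z+\I}{2\I}\theta_F(\I)=(z-\I)\hat\theta(z)$ with
\[
\hat\theta(z,x)=-\tfrac{1}{2\I}\theta_F^{(0)}(\I,x)+\theta_F^{(1)}(\I,x)+\tfrac{z-\I}{2}\theta_F^{(2)}(\I,x)+\cdots,
\]
and then claim that $\lim_{x\to0}W_x(\hat\theta(z),\theta_F(-\I))$ exists and vanishes by Corollary~\ref{cor:wronski}(v)--(vi). But item~(v) only treats $i\ne j$ and asserts that the $(i,j)=(1,0)$ limit is $\infty$ (since $i+j=n_l=1$, $ij=0$), while the $(i,j)=(0,0)$ case is covered by neither~(v) nor~(vi) and in fact also diverges for $l>\tfrac12$ (the leading behavior is $\sim x^{-2l+1}$). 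So the individual terms in your expansion diverge, and items~(v)--(vi) alone do not explain why their combination is finite, let alone zero.

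The paper circumvents this by expanding $\theta_F$ at $z_0=-\I$ instead, i.e.\ at the \emph{same} base point as the second argument of the Wronskian. Writing $\theta_F(z,x)=\theta_0(-\I,x)+\theta_1(-\I,x)(z+\I)+\tfrac{(z+\I)^2}{2}\theta_2(z,x)$ and inserting $z$ and $z=\I$, the $\theta_1$-contribution cancels \emph{algebraically}, leaving
\[
\theta_F(z)-\tfrac{z+\I}{2\I}\theta_F(\I)=\tfrac{\I-z}{2\I}\theta_0(-\I)+\text{(terms involving }\theta_2\text{)}.
\]
The Wronskian of $\theta_0(-\I)$ with $\theta_F(-\I)=\theta_0(-\I)$ is identically zero, and the $\theta_2$-terms have leading power $x^{-l+4}$, whose Wronskian with $x^{-l}$ is $O(x^{-2l+3})\to0$ for $l<\tfrac32$. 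This is the missing cancellation; your factorization at $z_0=\I$ obscures it. Re-expand at $z_0=-\I$ and the argument closes.
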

\begin{proof}
Consider the following function for $x>0$
\be\label{eq:41.07}
Q_1(z,x):=(z^2+1)\int_x^{+\infty}\bigl((\widetilde{H}-z)^{-1}\psi(\I,t)\bigr)\psi(\I,t)^*dt.
\ee
Note that, the definition of $Q(z,x)$ is correct and
\be\label{eq:41.08}
\lim_{x\to 0}Q_1(z,x)=\widetilde{M}(z).
\ee
Furthermore, by Lemma \ref{lem:sing_pert} \[
\psi(z,x)-\psi(\I,x)=(z-\I)(\widetilde{H}-z)^{-1}\psi(\I,x),
\]
 and hence we get
\begin{align*}
Q_1(z,x) &=(z+\I)\int_x^{+\infty}(\psi(z,t)-\psi(\I,t))\psi(\I,t)^*dt= \\
&=(z+\I)\int_x^{+\infty}\psi(z,t)\psi(\I,t)^*dt-(z+\I)\int_x^{+\infty}\psi(\I,t)\psi(\I,t)^*dt\\
&=-W_x(\psi(z),\psi(\I)^*)+\frac{z+\I}{2\I}W_x(\psi(\I),\psi(\I)^*)\\
&=-W_x\Big(\psi(z)-\frac{z+\I}{2\I} \psi(\I)\ ,\ \psi(-\I)\Big).
\end{align*}
Therefore, by \eqref{eq:41.08}
\be\label{eq:41.09}
\widetilde{M}(z)=-\lim_{x\to 0}W_x\Big(\psi(z)-\frac{z+\I}{2\I} \psi(\I)\ ,\ \psi(-\I)\Big).
\ee
Using the definition \eqref{m-tau1} of $\psi(z,x)$, we obtain
\begin{align*}
&W_x\big(\psi(z),\psi(-\I)\big)
 =W_x\big(\theta(z),\theta(-\I)\big)+M(z)M(-\I)W_x\big(\phi(z),\phi(-\I)\big)\\
&\qquad +M(z)W_x\big(\phi(z),\theta(-\I)\big)+ M(-\I)W_x\big(\theta(z),\phi(-\I)\big).
\end{align*}
Combining \eqref{eq:41.09} with the last equality and using Corollary \ref{cor:wronski} (ii)--(iii), we finally get
\begin{eqnarray}\label{eq_5.11}
&\widetilde{M}(z)= \lim_{x\to 0}Q_1(z,x)
= M(z)-\im M(\I)\cdot z -\re M(\I)  \\
&- \lim_{x\to 0}W_x\Big(\theta(z)-\frac{z+\I}{2\I}\theta(\I)\ ,\ \theta(-\I)\Big).\nn
\end{eqnarray}
Further, setting $z_0=-\I$ in \eqref{eq_5.14t} we get the following representation of $\theta(z,x)$,
\begin{align*}
\theta(z,x)&=\theta_0(-\I,x)+\theta_1(-\I,x)(z+\I)+\frac{\theta_2(z,x)}{2}(z+\I)^2+F(z)\phi(z,x),\\
 &\quad \theta_j(z,x)=x^{-l+2j}\widetilde{\theta}_j(z,x),\qquad j\in\{0,1,2\}.
\end{align*}
Using this representation and noting that $\lim_{x\to 0}W_x(\theta_2(z),\theta(-\I))=0$, we see that the limit in \eqref{eq_5.11} exists and is an entire function in $z$.
Therefore, setting
\be\label{eq:4.6}
G(z) :=-\lim_{x\to 0}W_x\Big(\theta(z)-\frac{z+\I}{2\I}\theta(\I)\ ,\ \theta(-\I)\Big)=F(z)-\im F(\I) \cdot z-\re F(\I),
\ee
we have proven the claim.
\end{proof}

\begin{proof}[Proof of Theorem \ref{th:main} in the case $n_l=1$]
The first part is contained in Lemma \ref{lem_m=m}.

Further, combining \eqref{eq:spmes_mti}, \eqref{eq:spmes_mti_2}, \eqref{eq_connection_l=1}, and \eqref{eq:4.6}, by Theorem \ref{th:gnf} we see that $M\in N_1^\infty$ if and only if the function $G$ defined by \eqref{eq:4.6}, and hence the function $F$, is a polynomial satisfying \eqref{eq:b.08} with $\kappa=1$.

By Definition~\ref{def_thetafrob},  $\theta(z,x)$ is a Frobenius type solution if $F$ is a linear function and hence in this case $M\in N_1^\infty$.
\end{proof}

\subsubsection{Step 2. The case $n_l=2$}

Since $l\in [3/2,5/2)$ we get $\gk_l=1$.
Furthermore, by  Lemma~\ref{lem:sing_pert},
\[
\psi(z,x)\in \mathfrak{H}_{-2}\setminus\mathfrak{H}_{-1},\quad z\in\C\setminus\sig(H),
\]
and in this case \eqref{eq:def_mti} takes the form
\be\label{sp_mfunct_l=2}
\widetilde{M}(z)=(z^2+1) \dpr[L^2]{\psi(\I)}{\bigl((\widetilde{H}-z)^{-1}-\mathcal{R}\bigr)\psi(\I)},
\ee
Note also that $\widetilde{M}(z)\in N_{1}^\infty$ (cf. \eqref{eq:spmes_mti}, \eqref{eq:spmes_mti_2} and Theorem \ref{th:gnf}).

\begin{lemma}\label{lem_m=m_l=2}
Let $l\in [3/2, 5/2)$ and assume Hypothesis \ref{hyp_q}. Let the functions $M$ and $\widetilde{M}$ be defined by \eqref{m-tau1}
and \eqref{sp_mfunct_l=2}, respectively. Then
\[
M(z)=\widetilde{M}(z)+ \im M(\I)\cdot z + \re M(\I) +\frac{\im \dot{M}(\I)}{2}(z^2+1)+ G(z), \quad z\in \C\setminus\sig(H),
\]
where the function $G$ is entire. Moreover,
\[
G(z)=F(z)-\im F(\I)\cdot z-\re F(\I)-\frac{z^2+1}{2}\im \dot{F}(\I),
\]
 where $F(z)$ is given in Lemma~\ref{lem:theta_fr}.
\end{lemma}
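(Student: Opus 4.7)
The plan is to follow the proof of Lemma~\ref{lem_m=m} adapted to the regularized definition \eqref{sp_mfunct_l=2} of $\widetilde M$. I would introduce the auxiliary function
\[
Q_2(z,x) := (z^2+1)\int_x^{+\infty}\bigl\{[(\widetilde H-z)^{-1}-\mathcal{R}]\psi(\I,t)\bigr\}\psi(\I,t)^* dt;
\]
the shift property of the scale of spaces gives $[(\widetilde H-z)^{-1}-\mathcal{R}]\psi(\I)\in\hr_2$ (each of the two resolvents individually sends $\hr_{-2}\to\hr_0$, but their differences with $(\widetilde H\mp\I)^{-1}$ gain two extra orders of regularity via the first resolvent identity), so that $Q_2(z,x)$ is well defined for $x>0$ and $\lim_{x\to 0}Q_2(z,x)=\widetilde M(z)$.

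The next step is to rewrite $Q_2(z,x)$ as a boundary Wronskian. Expanding $\mathcal{R}=\tfrac{1}{2}[(\widetilde H-\I)^{-1}+(\widetilde H+\I)^{-1}]$ and using Lemma~\ref{lem:sing_pert} together with the first resolvent identity, one has
\[
(\widetilde H-\I)^{-1}\psi(\I)=\psi^{(1)}(\I),\qquad (\widetilde H+\I)^{-1}\psi(\I)=\frac{\psi(\I)-\psi(-\I)}{2\I},
\]
which expresses the integrand as an explicit linear combination of $\psi(z,t)$, $\psi(\pm\I,t)$, and $\psi^{(1)}(\I,t)$. Each integral $\int_x^\infty\psi(w,t)\psi(-\I,t)\,dt$ (with $w\in\{z,\I\}$) reduces to $-W_x(\psi(w),\psi(-\I))/(w+\I)$ via the standard Green identity, while $\int_x^\infty\psi^{(1)}(\I,t)\psi(-\I,t)\,dt$ is obtained by applying Lagrange's identity to $(\tau-\I)\psi^{(1)}(\I)=\psi(\I)$, and $\int_x^\infty\psi(-\I,t)^2\,dt$ by applying it to $(\tau+\I)\psi^{(1)}(-\I)=\psi(-\I)$. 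After the spurious $W_x(\psi(\I),\psi(-\I))$ contributions generated by these three manipulations cancel against each other, one obtains a representation of the form
\[
Q_2(z,x)=-W_x\!\left(\psi(z)-\frac{z+\I}{2\I}\psi(\I)-\frac{z^2+1}{4\I}\bigl[\psi^{(1)}(\I)-\psi^{(1)}(-\I)\bigr],\ \psi(-\I)\right),
\]
i.e.\ the exact second-order analogue of the key identity in Lemma~\ref{lem_m=m}.

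I would then insert $\psi=\theta+M\phi$, let $x\to 0$, and separate the $M$-side from the $\theta$-side. By items (ii)--(iv) of Corollary~\ref{cor:wronski}, Wronskians involving $\phi^{(j)}$ paired against $\theta^{(k)}$ or $\phi^{(k)}$ at the boundary yield $0$ or $\pm 1$, and the $M$-contribution collapses to $M(z)$ minus a polynomial built from $M(\pm\I),\dot M(\pm\I)$; Schwarz symmetry $M(-\I)=\overline{M(\I)}$, $\dot M(-\I)=\overline{\dot M(\I)}$ then identifies this polynomial with $\re M(\I)+\im M(\I)z+\tfrac{1}{2}\im\dot M(\I)(z^2+1)=P_M(z)$. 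The residual $\theta$-side, via the Frobenius representation of Lemma~\ref{lem:theta_fr} with $z_0=-\I$ combined with item~(vi) of Corollary~\ref{cor:wronski}, depends only on $F$ (the singular Frobenius part contributes vanishing Wronskian limits due to the chosen interpolation coefficients) and by the same Schwarz-symmetric reduction equals $F(z)-P_F(z)=G(z)$.

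The main obstacle will be the Wronskian bookkeeping: verifying that the three different $W_x(\psi(\I),\psi(-\I))$ contributions cancel exactly, confirming that the coefficients in front of $\psi(\I)$ and $\psi^{(1)}(\I)-\psi^{(1)}(-\I)$ really subtract the correct second-order Taylor polynomial of $\psi$ at $z=\I$ modulo $\psi(-\I)$, and matching the Schwarz-symmetric structure so that $P_M$ and $P_F$ appear in the precise normalization dictated by \eqref{eq:pol_f}. Once the algebra is under control, the remainder of the argument proceeds in exact parallel to the $n_l=1$ case.
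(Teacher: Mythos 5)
Your proposal is correct and follows essentially the same route as the paper: you introduce the same regularized quantity $Q_2(z,x)$, reduce it via Lagrange identities to the boundary Wronskian of $\psi(z)-\frac{z+\I}{2\I}\psi(\I)-\frac{z^2+1}{4\I}\bigl[\dot\psi(\I)-\dot\psi(-\I)\bigr]$ against $\psi(-\I)$ (the paper's combination, since $\im\dot\theta(\I)=\frac{1}{2\I}\bigl(\dot\theta(\I)-\dot\theta(-\I)\bigr)$), and then split $\psi=\theta+M\phi$ and finish with Corollary~\ref{cor:wronski} and the Frobenius representation of Lemma~\ref{lem:theta_fr}, exactly as in the paper. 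The only difference is presentational: the paper writes $Q_2=Q_1-(z^2+1)\mathcal{R}_2$ and handles $\mathcal{R}_2$ via a real-part identity, whereas you expand $\mathcal{R}\psi(\I)$ directly through $\dot\psi(\I)$ and $\frac{\psi(\I)-\psi(-\I)}{2\I}$; both lead to the same limit and the same identification of the polynomial part and of $G=F-P_F$.
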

\begin{proof}
For $x>0$, consider the function
\[
Q_2(z,x):=(z^2+1)\int_x^{+\infty}\bigl(((\widetilde{H}-z)^{-1}-\mathcal{R})\psi(\I,t)\bigr)\psi(\I,t)^*dt.
\]
Note that the definition of $Q_2(z,x)$ is correct and, moreover,
\be\label{eq:42.03}
\lim_{x\to 0}Q_2(z,x)=\widetilde{M}(z).
\ee
Furthermore,
\begin{align}
Q_2(z,x)=Q_1(z,x) -(z^2+1)\mathcal{R}_2(x),\nn
\end{align}
where $Q_1(z,x)$ is given by \eqref{eq:41.07} and
\begin{align*}
\mathcal{R}_2(x):= \int_x^{+\infty}\bigl(\mathcal{R}\psi(\I,t)\bigr)\psi(\I,t)^*dt
= \re \int_x^{+\infty}\bigl((\widetilde{H}-\I)^{-1}\psi(\I,t)\bigr)\psi(\I,t)^*dt \\
=\re \int_x^{+\infty}\dot{\psi}(\I,t)\psi(\I,t)^*dt
=-\re\ W_x\Big(\frac{\dot{\psi}(\I)}{2\I}+\frac{\psi(\I)}{4},\psi(-\I)\Big)\\
=-\re\ W_x\Big(\frac{\dot{\psi}(\I)}{2\I},\psi(-\I)\Big)=-\frac{1}{2}\ \im\ W_x\big(\dot{\psi}(\I),\psi(-\I)\big).
\end{align*}
Noting that
\begin{align*}
&W_x\big(\dot{\psi}(\I),\psi(-\I)\big)\\
&=W_x(\dot{\theta}(\I),\theta(-\I))+M(\I)W_x(\dot{\phi}(\I),\theta(-\I))
+\dot{M}(\I)W_x(\phi(\I),\theta(-\I))\\
&+M(-\I)W_x(\dot{\theta}(\I),\phi(-\I))+|M(\I)|^2 W_x(\dot{\phi}(\I),\phi(-\I))+\dot{M}(\I)M(-\I)W_x(\phi(\I),\phi(-\I)),
\end{align*}
and using Corollary \ref{cor:wronski}, we get
\begin{align}
\widetilde{M}(z)= & \lim_{x\to 0} Q_2(z,x)= \lim_{x\to 0} \Big(Q_1(z,x)+\frac{(z^2+1)}{2}\im \ W_x\big(\dot{\psi}(\I),\psi(-\I)\big)\Big)\\
= & M(z) - \im M(\I)\cdot z - \re M(\I) -\frac{\im \dot{M}(\I)}{2}(z^2+1) \nonumber\\
&-\lim_{x\to 0}W_x\Big(\theta(z)-\frac{z+\I}{2\I}\theta(\I)\ -\frac{z^2+1}{2}\im\big( \dot{\theta}(\I)\big)\ ,\ \theta(-\I)\Big).\nonumber
\end{align}
Noting that $\theta_k(\I,x)=\theta_k(-\I,x)^*$ and using \eqref{eq_5.14t} with $z_0=-\I$, we obtain
\begin{align*}
\im \big(\dot{\theta}(\I,x)\big)&=\frac{\dot{\theta}(\I,x)-\dot{\theta}(-\I,x)}{2\I}=\frac{\dot{\theta}(\I,x)-\theta_1(-\I,x)-\big(F(-\I)\phi(-\I,x)\big)^{\cdot}}{2\I}\\
&=\theta_2(-\I,x)-2\theta_3(\I,x)+\im \left(\big(F(\I)\phi(\I,x)\big)^\cdot\right).
\end{align*}
Finally, using Corollary \ref{cor:wronski}, we get after a straightforward calculation
\begin{align*}
\widetilde{M}(z)
&=M(z) - \im M(\I)\cdot z - \re M(\I) -\frac{\im \dot{M}(\I)}{2}(z^2+1) \\&+F(z)-\im F(\I)\cdot z-\re F(\I)-\frac{z^2+1}{2}\im \dot{M}(\I).
\end{align*}
\end{proof}

\begin{proof}[Proof of Theorem \ref{th:main} in the case $n_l=2$]
The first part is contained in Lemma \ref{lem_m=m}.

Further, using Lemma~\ref{lem_m=m_l=2} and \eqref{eq:spmes_mti}--\eqref{eq:spmes_mti_2}, by Theorem~\ref{th:gnf} we see that $M(z)$
is an $N_{1}$-function if and only if $F(z)$ is a polynomial satisfying \eqref{eq:b.08} with $\kappa=1$. In particular, $M\in N_1$ if
$\theta(z,x)$ is a Frobenius type solution.
\end{proof}

\subsubsection{Step 3. The case $n_l=2k+1$, $k\in\N$}

Assume that $l\in[2k+1/2,2k+3/2)$ for some fixed $k\in \N$. Note that in this case $\gk_l=\floor{\frac{l}{2}+\frac{3}{4}}=k+1$.
Furthermore, by Lemma~\ref{lem:sing_pert},
\[
\psi(z,x)\in \hr_{-2k-1}\setminus \hr_{-2k},\quad z\in\C\setminus\sig(H),
\]
and
\be\label{eq:43.01}
\psi^{(j)}(\I,x):=\partial^j_z\psi(\I,x)=j!(\widetilde{H}-\I)^{-j}\psi(\I,x) \in \hr_{-2k-1+2j}\setminus \hr_{-2k+2j}.
\ee
In this case, \eqref{eq:def_mti} takes the form
\be\label{eq:43.03}
\widetilde{M}(z)=\frac{(z^2+1)^{k+1}}{k!^2}\dpr[L^2]{\psi^{(k)}(\I)}{(\widetilde{H}-z)^{-1}\psi^{(k)}(\I)}.
\ee
Observe that $\widetilde{M}(z)$ is analytic in $\C\setminus\sig(H)$ and, moreover, $\widetilde{M}\in N^\infty_{\gk_l}$.

To proceed further we need the following formula
\be\label{eq:43.04}
(z-\I)^{k+1}(\widetilde{H}-z)^{-1}(\widetilde{H}-\I)^{-k}\psi(\I,x)=\psi(z,x)-\sum_{j=0}^{k}(z-\I)^j(\widetilde{H}-\I)^{-j}\psi(\I,x)
\ee
Combining \eqref{eq:43.04} with \eqref{eq:43.01}, we can rewrite \eqref{eq:43.03} as follows
\[
\widetilde{M}(z)=(z+\I)^{k+1}\Big(\frac{\psi^{(k)}(\I)}{k!},\ \psi(z)-\sum_{j=0}^k \frac{\psi^{(j)}(\I)}{j!}(z-\I)^{j} \Big).
\]
As in the previous subsections, we set
\be\label{eq:43.09}
Q_{n_l}(z,x):=\frac{(z+\I)^{k+1}}{k!}\int_x^\infty \Big(\psi(z,x)-\sum_{j=0}^k \frac{\psi^{(j)}(\I,t)}{j!}(z-\I)^{j}\Big)\psi^{(k)}(\I,t)^*dt.
\ee
Note that
\be\label{eq:43.03B}
\widetilde{M}(z)=\lim_{x\to 0}Q_{n_l}(z,x).
\ee
Furthermore, for $j\in\{0,\dots,k\}$ consider the following functions
\be\label{eq:43.10}
Q_{n_l,j}(z,x)=\frac{1}{k!}\int_x^\infty \psi^{(j)}(z,t)\ \psi^{(k)}(\I,t)^*dt,\quad Q_{n_l,j}(z,x)=\partial^j_z Q_{n_l,0}(z,x).
\ee
Thus we get
\be\label{eq:43.11}
Q_{n_l}(z,x)=(z+\I)^{k+1}\Big(Q_{n_l,0}(z,x)-\sum_{j=0}^k\frac{Q_{n_l,j}(\I,x)}{j!}(z-\I)^j\Big).
\ee
We begin with the function $Q_{n_l,0}(z,x)$. Clearly, we get
\begin{align}\nn
&Q_{n_l,0}(z,x)=\frac{1}{k!}\int_x^\infty \psi(z,t)\ \psi^{(k)}(\I,t)^*dt\\ \nn
&=\frac{1}{k!}\frac{\partial^k}{\partial \zeta^k}\int_x^\infty \psi(z,t)\ \psi(\zeta,t)dt\Big|_{\zeta=-\I}
=-\frac{1}{k!}\frac{\partial^k}{\partial\zeta^k}W_x\Big(\psi(z),\frac{\psi(\zeta)}{z-\zeta}\Big)\Big|_{\zeta=-\I}\\
&\qquad\qquad=-W_x\Big(\frac{\psi(z)}{(z+\I)^{k+1}}\ , \ \sum_{j=0}^k\frac{\psi^{(j)}(-\I)}{j!}(z+\I)^{j}\Big).\label{eq:43.16}
\end{align}
Moreover, using Corollary \ref{cor:wronski}, we obtain
\begin{align*}
&\lim_{x\to0}W_x\Big(\frac{M(z)\phi(z)}{(z+\I)^{k+1}}\ , \ \sum_{j=0}^k\frac{\theta^{(j)}(-\I)}{j!}(z+\I)^{j}\Big)=-\frac{M(z)}{(z+\I)^{k+1}},\\
& \lim_{x\to 0}W_x\Big(\frac{\theta(z)}{(z+\I)^{k+1}}\ , \ \sum_{j=0}^k\frac{(M\phi)^{(j)}(-\I)}{j!}(z+\I)^{j}\Big)=\sum_{j=0}^k\frac{M^{(j)}(-\I)}{j!}(z+\I)^{j-(k+1)},\\
&\lim_{x\to 0}W_x\Big(\frac{M(z)\phi(z)}{(z+\I)^{k+1}}\ , \ \sum_{j=0}^k\frac{(M\phi)^{(j)}(-\I)}{j!}(z+\I)^{j}\Big)=0.
\end{align*}
Setting
\be\label{eq:43.05}
\Theta(z,x):=W_x\Big(\frac{\theta(z)}{(z+\I)^{k+1}}\ , \ \sum_{j=0}^k\frac{\theta^{(j)}(-\I)}{j!}(z+\I)^{j}\Big)
\ee
and using \eqref{m-tau1}, by Corollary \ref{cor:wronski} we obtain
\be\label{eq:43.06}
\lim_{x\to0}\left(Q_{n_l,0}(z,x)-\Theta(z,x)\right)=\frac{1}{(z+\I)^{k+1}}\Big(M(z)-\sum_{j=0}^k \frac{M^{(j)}(-\I)}{j!}(z+\I)^{j}\Big).
\ee
On the other hand, by Lemma~\ref{lem:theta_fr} we get
\begin{align*}
\Theta(z,x)&=W_x\Big(\frac{\theta_F(z)+F(z)\phi(z)}{(z+\I)^{k+1}}\ , \ \sum_{j=0}^k\frac{\theta^{(j)}_F(-\I)+(F\phi)^{(j)}(-\I)}{j!}(z+\I)^{j}\Big)\\
&=W_x\Big(\sum_{i=0}^\infty\frac{x^{-l+2i}\widetilde{\theta}_i(-\I)}{i!}(z+\I)^{i-(k+1)}\ , \ \sum_{j=0}^k\frac{x^{-l+2j}\widetilde{\theta}_j(-\I)}{j!}(z+\I)^{j}\Big)\\
&\qquad-\frac{1}{(z+\I)^{k+1}}\Big(F(z)-\sum_{j=0}^k\frac{F^{(j)}(-\I)}{j!}(z+\I)^j\Big)\\
&=W_x\Big(\sum_{i=k+1}^\infty\frac{x^{-l+2i}\widetilde{\theta}_i(-\I)}{i!}(z+\I)^{i-(k+1)}\ , \ \sum_{j=0}^k\frac{x^{-l+2j}\widetilde{\theta}_j(-\I)}{j!}(z+\I)^{j}\Big)
-\widetilde{F}(z)\\
&\quad=\sum_{i,j:i+j\le k}\frac{(z+\I)^{i+j}}{(i+k+1)!j!}W_x\Big(\theta^{(i+k+1)}_F(-\I),\theta^{(j)}_F(-\I)\Big)-\widetilde{F}(z),
\end{align*}
where
\[
\widetilde{F}(z)=\frac{1}{(z+\I)^{k+1}}\Big(F(z)-\sum_{j=0}^k\frac{F^{(j)}(-\I)}{j!}(z+\I)^j\Big).
\]
Let us denote
\be\label{eq:43.15}
\Theta_F(z,x):=\sum_{i,j:i+j\le k}\frac{(z+\I)^{i+j}}{(i+k+1)!j!}W_x\Big(\theta^{(i+k+1)}_F(-\I),\theta^{(j)}_F(-\I)\Big).
\ee
Note that $\Theta_F(z,x)$ is a polynomial in $z$ of order at most $k$. Therefore,
\begin{align}
\Theta(z,x)-\sum_{j=0}^k\frac{\Theta^{(j)}(\I,x)}{j!}(z-\I)^k=\Theta_F(z,x)-\sum_{j=0}^k\frac{\Theta_F^{(j)}(\I,x)}{j!}(z-\I)^k\nn\\
-\widetilde{F}(z)+\sum_{j=0}^{k}\frac{\widetilde{F}^{(j)}(\I)}{j!}(z-\I)^j=-\widetilde{F}(z)+\sum_{j=0}^{k}\frac{\widetilde{F}^{(j)}(\I)}{j!}(z-\I)^j.\label{eq:43.14}
\end{align}
Noting that $Q_{n_l,j}(z,x)=\partial^j_z Q_{n_l,0}(z,x)$, by \eqref{eq:43.06} and \eqref{eq:43.16}, we get
\begin{align}\label{eq:43.07}
&\lim_{x\to0}\left(Q_{n_l,j}(z,x)-\frac{\partial^j}{\partial z^j} \Theta(\I,x)\right)=M_j(\I),\\
&M_j(\I)=\frac{\partial^j}{\partial z^j}\left(\frac{M(z)}{(z+\I)^{k+1}}-\frac{1}{(z+\I)^{k+1}}\sum_{j=0}^k \frac{M^{(j)}(-\I)}{j!}(z+\I)^{j}\right)\Big|_{z=\I}.\label{eq:43.08}
\end{align}
Observe that for arbitrary real entire function $f(z)$ the function
\[
\widetilde{f}(z)=(z+\I)^{-(k+1)}\Big(f(z)-\sum_{j=0}^k\frac{f^{(j)}(-\I)}{j!}(z+\I)^j\Big)
\]
is real and entire. Furthermore, the function
\[
(z+\I)^{k+1}\Big(\widetilde{f}(z)-\sum_{j=0}^k\frac{\widetilde{f}^{(j)}(-\I)}{j!}(z+\I)^j\Big)=f(z)-P_f(z)
\]
is also real and entire. Here $P_f$ is a real polynomial of order at most $2k+1$. Moreover, since $z=\pm \I$ is a zero of order at least $k+1$, the polynomial $P_f$ satisfies
\be\label{eq:43.17}
P_f(z^*)=P_f(z)^*,\qquad P^{(j)}_f(\I)=f^{(j)}(\I),\quad j\in\{0,\dots, k\}.
\ee

Combining \eqref{eq:43.03B} with \eqref{eq:43.10}, \eqref{eq:43.11},  \eqref{eq:43.06}, and \eqref{eq:43.14}--\eqref{eq:43.08} we finally get
\begin{align*}
\widetilde{M}(z)&=\lim_{x\to 0}(z+\I)^{k+1}\Big(Q_{n_l,0}(z,x)-\sum_{j=0}^k\frac{Q_{n_l,j}(\I,x)}{j!}(z-\I)^j\Big)\\
&=\lim_{x\to 0}(z+\I)^{k+1}\Big(\Theta(z,x)-\sum_{j=0}^k\frac{\Theta^{(j)}(\I,x)}{j!}(z-\I)^j\Big)\\
&\qquad+(z+\I)^{k+1}\Big(M_0(z)-\sum_{j=0}^k\frac{M_j(\I)}{j!}(z-\I)^j\Big)\\
&\qquad=M(z)-P_M(z) -(F(z)-P_F(z)),
\end{align*}
where $P_f(z)$ is a polynomial of order $n_l=2k+1$ satisfying \eqref{eq:43.17}.
Thus, we proved the following result.
\begin{lemma}\label{lem_m=m_2k+1}
Let $l\in[2k+1/2,2k+3/2)$ and assume Hypothesis \ref{hyp_q}. Let the functions $M$ and $\widetilde{M}$ be defined by \eqref{m-tau1} and \eqref{eq:43.03}, respectively. Then
\be
M(z)=\widetilde{M}(z)+P_M(z)+G(z),\quad z\in \C\setminus\sig(H),
\ee
where the function $G$ is entire. Moreover, $G(z)=F(z)-P_F(z)$, where the function $F$ is given in Lemma~\ref{lem:theta_fr} and $P_f(z)$ is a polynomial of order $n_l=2k+1$ satisfying \eqref{eq:43.17}.
\end{lemma}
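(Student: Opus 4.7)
The plan is to mimic the two previous cases ($n_l=1$ and $n_l=2$) but organized around the iterated resolvent identity, since $\gk_l=k+1$ forces us to subtract $k+1$ correction terms instead of just one or two. First I would use Lemma~\ref{lem:sing_pert} to write
\[
(z-\I)^{k+1}(\widetilde H-z)^{-1}(\widetilde H-\I)^{-k}\psi(\I)=\psi(z)-\sum_{j=0}^{k}(z-\I)^{j}(\widetilde H-\I)^{-j}\psi(\I),
\]
so that \eqref{eq:43.03} becomes an $L^2$-pairing of $\psi^{(k)}(\I)$ with $\psi(z)$ minus its $k$-th Taylor polynomial at $\I$. Cutting the integration at $x>0$ gives the quantity $Q_{n_l}(z,x)$ in \eqref{eq:43.09}, whose limit as $x\to 0$ is $\widetilde M(z)$. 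This reduces everything to evaluating boundary Wronskians.

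Next I would compute $Q_{n_l,0}(z,x)=\frac{1}{k!}\int_x^\infty\psi(z,t)\psi^{(k)}(\I,t)^{\ast}\,dt$ via the standard trick $\int_x^\infty u(t)v(t)\,dt=-\frac{W_x(u,v)}{z-\zeta}$ for solutions at eigenparameters $z,\zeta$, and then differentiate $k$ times in $\zeta$ at $\zeta=-\I$. This expresses $Q_{n_l,0}(z,x)$ as a Wronskian of $\psi(z)/(z+\I)^{k+1}$ with a finite Taylor expansion of $\psi(-\I)$. Substituting the Weyl decomposition $\psi=\theta+M\phi$, the four types of Wronskians that appear are controlled by Corollary~\ref{cor:wronski}: the $\phi$--$\theta$ and $\theta$--$\phi$ pieces contribute the Taylor remainder of $M(z)$ at $-\I$, the $\phi$--$\phi$ pieces vanish in the limit, and the $\theta$--$\theta$ piece is exactly the quantity $\Theta(z,x)$ in \eqref{eq:43.05}.

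The $\theta$--$\theta$ piece is where the entire function $F(z)$ enters. Writing $\theta=\theta_F+F\phi$ with $\theta_F$ the Frobenius series from Lemma~\ref{lem:theta_fr} and using items (v)--(vi) of Corollary~\ref{cor:wronski}, the contributions split into a polynomial-in-$z$ part $\Theta_F(z,x)$ of degree $\le k$ coming from the Frobenius coefficients, plus a closed-form term built out of the Taylor remainder of $F(z)$ at $-\I$ divided by $(z+\I)^{k+1}$. Assembling $Q_{n_l}(z,x)=(z+\I)^{k+1}\bigl(Q_{n_l,0}(z,x)-\sum_{j=0}^k\frac{Q_{n_l,j}(\I,x)}{j!}(z-\I)^{j}\bigr)$, the polynomial piece $\Theta_F$ is annihilated by the $\I$-Taylor subtraction (since $\deg\Theta_F\le k$), so what survives in the limit $x\to 0$ is exactly $M(z)-P_M(z)-(F(z)-P_F(z))$, where $P_f$ is the unique real polynomial of degree $\le 2k+1$ interpolating $f$ to order $k+1$ at $\pm\I$ in the sense of \eqref{eq:pol_f}.

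The main obstacle is purely bookkeeping: one must verify that the double Taylor expansion at $-\I$ (from the resolvent identity) and at $+\I$ (from the subtraction defining $Q_{n_l}$), once multiplied by $(z+\I)^{k+1}$, reconstructs the real interpolating polynomials $P_M$ and $P_F$ with the correct normalizations \eqref{eq:pol_f}, and that every Wronskian of Frobenius coefficients of order $\ge k+1$ either vanishes at $x=0$ or cancels against a matching term after the $\I$-Taylor subtraction. The fact that $2(i+j)-2l\le 1$ precisely when $i+j\le n_l=2k+1$, together with the Frobenius normalization $\lim_{x\to 0}W_x(\theta^{(n_l+1)}(z),\theta(\zeta))\equiv 0$ characterizing Frobenius solutions (Corollary~\ref{cor:3.7}), is exactly what makes this cancellation work; otherwise the limit in \eqref{eq:43.05} would diverge.
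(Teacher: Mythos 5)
Your proposal follows the paper's own proof essentially step by step: the iterated resolvent identity \eqref{eq:43.04}, the cut-off quantities $Q_{n_l}$ and $Q_{n_l,j}$, the Wronskian evaluation of $Q_{n_l,0}$ via Corollary~\ref{cor:wronski}, the splitting $\theta=\theta_F+F\phi$ with the degree-$\le k$ polynomial $\Theta_F$ annihilated by the Taylor subtraction at $\I$, and the identification of the surviving terms with $M-P_M-(F-P_F)$ where $P_f$ interpolates as in \eqref{eq:43.17}. The only cosmetic inaccuracy is in your last paragraph: the cancellation of $\Theta_F$ is pure degree counting in $z$ (it holds identically in $x$), and the lemma needs no Frobenius normalization of $\theta$ itself -- only the vanishing, via Corollary~\ref{cor:wronski}~(vi) applied to the series part $\theta_F$, of the high-order Wronskians -- but this does not affect the correctness of the argument.
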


\begin{proof}[Proof of Theorem \ref{th:main} in the case $n_l=2k+1$, $k\in\N$]
By Lemma~\ref{lem_m=m_2k+1}
and \eqref{eq:b.08},  $M(z)$ is an $N_{\gk_l}^\infty$-function if and only if $F(z)=\sum_{j=0}^m a_mz^m$ with either $m\le 2\gk_l$
or $m=2\gk_l+1$ with $a_m>0$.

In particular, $M(z)\in N_{\gk_l}$ if $\theta(z,x)$ is a Frobenius type solution.
\end{proof}

\subsubsection{Step 4. The case $n_l=2k+2$, $k\in\N$}

Finally, assume that $l\in[2k+3/2,2k+5/2)$ for some fixed $k\in \N$. Note that in this case $\gk_l=\floor{\frac{l}{2}+\frac{3}{2}}=k+1$. By
 Lemma~\ref{lem:sing_pert},
\[
\psi(z,.)\in \hr_{-2(k+1)}\setminus \hr_{-2k-1},\quad z\in\C\setminus\sig(H)
\]
and \eqref{eq:def_mti} takes the form
\be\label{eq:44.03}
\widetilde{M}(z)=\frac{(z^2+1)^{k+1}}{k!^2}\Big(\psi^{(k)}(\I), \ \big((\widetilde{H}-z)^{-1}-\mathcal{R}\big)\psi^{(k)}(\I)\Big).
\ee
Observe that $\widetilde{M}(z)$ is analytic in $\C\setminus\sig(H)$ and, moreover, $\widetilde{M}\in N^\infty_{\gk_l}$.

As in the previous subsections, we set
\be\label{eq:44.09}
Q_{n_l}(z,x):=\frac{(z^2+1)^{k+1}}{k!^2}\int_x^\infty\Big(\big((\widetilde{H}-z)^{-1}-\mathcal{R}\big)\psi^{(k)}(\I,t)\Big)\overline{\psi^{(k)}(\I,t)} dt.
\ee
Note that
\be\label{eq:44.10}
\widetilde{M}(z)=\lim_{x\to 0}Q_{n_l}(z,x).
\ee
Observe that
\begin{align*}
Q_{n_l}(z,x)=Q_{n_l-1}(z,x)-(z^2+1)^{k+1}\re \Big(Q_{n_l-1}(\I,x)\Big)\\
=Q_{n_l-1}(z,x)-(z^2+1)\re\Big(\int_x^\infty \psi^{(k+1)}(\I,t)\overline{\psi^{(k)}(\I,t)}dt\Big).
\end{align*}
Arguing as in the previous subsection and using Corollary \ref{cor:wronski} with the representations from Lemmas \ref{lem_5.5} and \ref{lem:theta_fr}, after straightforward calculation we arrive at the following relation
\be
\widetilde{M}(z)=M(z)-P_M(z)-(F(z)-P_F(z)),
\ee
where $F$ is a real entire function from Lemma~\ref{lem:theta_fr} and $P_f$ is a real polynomial of order at most $n_l=2k+2$ such that
\be\label{eq:44.17}
P_f^{(j)}(\I)=f^{(j)}(\I),\qquad j\in\{0,\dots,k+1\}.
\ee
Thus we proved the following result.
\begin{lemma}\label{lem_m=m_2k+2}
Let $l\in[2k+3/2,2k+5/2)$ and assume Hypothesis \ref{hyp_q}. Let the functions $M$ and $\widetilde{M}$ be defined by \eqref{m-tau1} and \eqref{eq:44.03}, respectively. Then
\be\label{eq:43.19}
M(z)=\widetilde{M}(z)+P_M(z)+G(z),\quad z\in \C_+,
\ee
where the function $G$ is entire. Moreover, $G(z)=F(z)-P_F(z)$, where the function $F$ is given in Lemma~\ref{lem:theta_fr} and $P_f(z)$ is a real polynomial of order at most $n_l=2k+2$ satisfying \eqref{eq:44.17}.
\end{lemma}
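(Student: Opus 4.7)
The plan is to mirror the argument of Step~3, modified for the fact that $\psi^{(k)}(\I,\cdot)\in\hr_{-2}\setminus\hr_{-1}$ (rather than $\hr_{-1}\setminus\hr_0$), which forces the renormalization by $\mathcal{R}=\re((\widetilde H-\I)^{-1})$ in the definition \eqref{eq:44.09} of $Q_{n_l}(z,x)$. Note that $\gk_l=k+1$ is the same in both steps; only $n_l$ changes by one, and the extra subtraction of $\mathcal R$ will supply precisely one additional interpolation condition at $j=k+1$ in the final polynomial $P_M$. This is the same structural reason Step~2 refined Step~1.

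The first move is to split the regularized integral using the identity
\[
(\widetilde H-\I)^{-1}\psi^{(k)}(\I,\cdot)=\frac{1}{k+1}\,\psi^{(k+1)}(\I,\cdot),
\]
provided by Lemma~\ref{lem:sing_pert}, which yields
\[
Q_{n_l}(z,x)=\widetilde Q(z,x)-\frac{(z^2+1)^{k+1}}{(k+1)\,k!^2}\,\re\int_x^\infty \psi^{(k+1)}(\I,t)\,\psi^{(k)}(\I,t)^*\,dt,
\]
where $\widetilde Q(z,x)$ is the \emph{unregularized} integral of the form already treated in Step~3. Running the Step~3 analysis verbatim on $\widetilde Q$ yields in the limit $M(z)-\widetilde P_M(z)-(F(z)-\widetilde P_F(z))$, where $\widetilde P_f$ is the real polynomial of degree at most $2k+1$ satisfying \eqref{eq:43.17}.

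It then remains to evaluate the correction integral. Following the template of Step~2, I would differentiate the Wronskian identity
\[
\int_x^\infty \psi(z,t)\,\psi(\zeta,t)\,dt=-\frac{W_x(\psi(z),\psi(\zeta))}{z-\zeta}
\]
in $z$ and $\zeta$ (at $z=\I$ and $\zeta=-\I$ with multiplicities $k+1$ and $k$) to recast the correction as a Wronskian combination of $\psi^{(k+1)}(\I,\cdot)$ and $\psi^{(k)}(-\I,\cdot)$. Substituting $\psi^{(j)}(\pm\I,\cdot)=\theta^{(j)}(\pm\I,\cdot)+(M\phi)^{(j)}(\pm\I,\cdot)$ and passing to $x\to 0$ via Corollary~\ref{cor:wronski} together with the Frobenius expansions of Lemmas~\ref{lem_5.5} and~\ref{lem:theta_fr}, the $\theta$--$\theta$ Wronskians contribute a real polynomial correction coming from $F$, while the mixed $\theta$--$\phi$ and $\phi$--$\phi$ terms contribute a real polynomial in the values and derivatives of $M$ at $\pm\I$. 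Taking the real part then promotes $\widetilde P_f$ to a real polynomial $P_f$ of degree at most $n_l=2k+2$ verifying \eqref{eq:44.17}.

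The main obstacle is purely algebraic bookkeeping: tracking how the correction Wronskian combines with the Step~3 output so that the $M$- and $F$-parts separately reassemble into real polynomials of degree at most $n_l$ obeying \eqref{eq:44.17}, and confirming that all non-polynomial $z$-dependence cancels. The analytic ingredients (existence of the limits, the required Sobolev/Hardy memberships, and the structure of $\psi^{(j)}$ near $x=0$) are already supplied by Lemma~\ref{lem:sing_pert} and by the Frobenius representations, so no new estimates beyond Section~\ref{sec:ass_sol} are required.
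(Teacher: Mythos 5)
Your decomposition is the same one the paper uses: split the regularized $Q_{n_l}(z,x)$ of \eqref{eq:44.09} into the unregularized Step-3 expression plus the real-part correction proportional to $\re\int_x^\infty \psi^{(k+1)}(\I,t)\,\overline{\psi^{(k)}(\I,t)}\,dt$, and evaluate everything through Wronskians and Corollary~\ref{cor:wronski}. The gap is in how you pass to the limit: you treat the two pieces as separately convergent, and they are not. For $l\in[2k+3/2,2k+5/2)$ the correction integrand behaves like $t^{-2l+4k+2}$ near $t=0$ with real, generically nonzero leading coefficient, and $-2l+4k+2\le -1$, so the correction integral diverges as $x\to 0$; correspondingly, running the Step-3 computation on your $\widetilde Q$ leaves, at order $(z+\I)^{k+1}$, the Wronskian $W_x\big(\theta^{(n_l)}(-\I),\theta(-\I)\big)$ (total order $i+j=n_l$ with $ij=0$), whose limit Corollary~\ref{cor:wronski}~(v) explicitly gives as $\infty$ and which is \emph{not} removed by subtracting the degree-$k$ Taylor polynomial at $z=\I$; likewise the correction, rewritten via your differentiated Wronskian identity, contains $W_x\big(\theta^{(a)}(\I),\theta^{(b)}(-\I)\big)$ with $a+b\le 2k+1<n_l$, again divergent by (v). This is precisely why $\widetilde M$ is defined with the $\mathcal{R}$-subtraction in \eqref{eq:44.03} when $n_l$ is even. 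Hence the intermediate claims ``the limit of $\widetilde Q$ equals $M-\widetilde P_M-(F-\widetilde P_F)$'' and ``it remains to evaluate the correction integral'' are both false as standalone statements: neither limit exists.

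The repair is what the paper does, albeit tersely: keep both pieces at finite $x>0$, express both through Wronskians of $\theta^{(i)}$ and $\phi^{(j)}$ exactly as you propose, and pass to $x\to 0$ only in the combined expression, where the divergent Wronskians of the two pieces cancel against each other (in the same spirit as the cancellation of $\Theta_F$ against its own Taylor polynomial in Step 3). After that cancellation, the surviving finite limits — the case $i+j=n_l$, $ij>0$, of Corollary~\ref{cor:wronski}~(v) — supply the extra real constants that raise the degree of $P_f$ to $n_l=2k+2$ and yield the additional interpolation condition at $j=k+1$ in \eqref{eq:44.17}. Your closing remark that ``all non-polynomial $z$-dependence cancels'' points in the right direction, but as written your plan asks for two limits that do not exist, so the cancellation of divergences before taking the limit is a step you must add, not mere bookkeeping.
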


\begin{proof}[Proof of Theorem \ref{th:main} in the case $n_l=2k+2$, $k\in\N$]
The first part is contained in Lemma \ref{lem_m=m_2k+2}.

Further, by Lemma~\ref{lem_m=m_2k+2} and \eqref{eq:b.08},  $M(z)$ is an $N_{\gk_l}^\infty$-function if and only if $F(z)=\sum_{j=0}^m a_mz^m$ satisfies the conditions \eqref{eq:b.08} with $\kappa=\kappa_l$.

In particular, $M(z)\in N_{\gk_l}$ if $\theta(z,x)$ is a Frobenius type solution.
\end{proof}

\appendix

\section{Hardy inequality}
\label{app:hi}

Let $l> -1$. Define kernels
\be
K_l(x,y) := \begin{cases} x^{-(l+1)} y^l , & y \leq x,\\ 0, & y>x, \end{cases}
\ee
and associated integral operators
\be\label{hi_01k}
(\mathcal{K}_l f)(x):= \int_0^\infty K_l(x,y)f(y)dy = \frac{1}{x^{l+1}}\int_0^x y^l f(y)dy,
\ee
\be\label{hi_02k}
(\hat{\mathcal{K}}_l g)(y):= \int_0^\infty K_l(x,y) g(x)dx = y^l \int_y^\infty x^{-l-1} g(x)dx.
\ee
First of all we will need the following elementary facts:
\be\label{Kllog1}
\mathcal{K}_l(\log)(x) = \frac{\log(x)}{l+1} - \frac{1}{(l+1)^2}, \quad l> -1,
\ee
and
\be\label{Kllog2}
\hat{\mathcal{K}}_l(\log)(x) = \frac{\log(x)}{l} + \frac{1}{l^2}, \quad l>  0.
\ee
Furthermore, by Theorem 319 of \cite{hlp}, the following inequalities hold
\begin{align}\label{hi_01}
\|\mathcal{K}_l f \|_p &\leq \frac{p}{p(l+1)-1} \|f\|_p, \quad f\in L^p(0,\infty),\\ \label{hi_02}
\|\hat{\mathcal{K}}_l g\|_q &\leq \frac{q}{q l+1} \|g\|_q, \quad g\in L^q(0,\infty),
\end{align}
for $p\in (1,\infty)$, $\frac{1}{p}+\frac{1}{q}=1$ and $(l+1) p > 1$ (resp.\ $l q > -1$).
Moreover, H\"older's inequality implies
\begin{align}\label{hi_sup1}
|(\mathcal{K}_l f)(x)| &\le \frac{x^{1/q-1}}{(1+l q)^{1/q}} \|f\|_p, \quad l > - \frac{1}{q},\\ \label{hi_sup2}
|(\hat{\mathcal{K}}_l g)(y)| & \le \frac{y^{1/p-1}}{(p(l+1)-1)^{1/p}} \|g\|_q, \quad l > \frac{1}{p} -1.
\end{align}

\begin{lemma}\label{lem:Kl}
Let $a>0$ and $l>-1$.
The operator $\mathcal{K}_l$ is a bounded operator in $L^p(0,a)$ satisfying
\be\label{hi_01a}
\|\mathcal{K}_l f \|_p \leq \frac{p}{p(l+1)-1} \|f\|_p, \quad f\in L^p(0,a),
\ee
for any $p\in(\frac{1}{l+1},\infty]$ if $-1 < l \le 0$ and any $p\in[1,\infty]$ if $l>0$.
Moreover, if $f\in C[0,a]$, then $\mathcal{K}_l(f) \in C[0,a]$ with
\be\label{hi_01contin}
\lim_{x\to0} \mathcal{K}_l(f)(x) = \frac{f(0)}{l+1}.
\ee

Similarly, the operator $\hat{\mathcal{K}}_l$ is a bounded operator in $L^p(0,a)$ satisfying
\be\label{hi_02a}
\|\hat{\mathcal{K}}_l f\|_p \leq \frac{p}{p l+1} \|f\|_p, \quad f\in L^p(0,a),
\ee
for any $p\in[1,\frac{1}{-l})$ if $-1<l\le 0$ and any $p\in[1,\infty]$ if $l>0$.
Moreover, if $l>0$ and $f\in C[0,a]$, then $\hat{\mathcal{K}}_l(f) \in C[0,a]$ with
\be\label{hi_02contin}
\lim_{x\to0} \hat{\mathcal{K}}_l(f)(x) = \frac{f(0)}{l}.
\ee
\end{lemma}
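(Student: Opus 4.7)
My plan is to reduce the $L^p$-bounded\-ness assertions to the classical Hardy inequalities \eqref{hi_01}, \eqref{hi_02} on $(0,\infty)$, and to verify the endpoints $p=1,\infty$ by direct computation. First I would extend $f\in L^p(0,a)$ by zero to $(0,\infty)$; for $x,y\in(0,a)$ the integrals defining $\mathcal{K}_l f(x)$ and $\hat{\mathcal{K}}_l g(y)$ then see only values on $(0,a)$ (for $\hat{\mathcal{K}}_l$ the tail $\int_a^\infty$ drops out after extension by zero). Consequently, for $p\in(1,\infty)$ the inequalities \eqref{hi_01}, \eqref{hi_02} transfer verbatim to $L^p(0,a)$ and yield \eqref{hi_01a}, \eqref{hi_02a} in precisely the stated ranges $p(l+1)>1$ respectively $pl+1>0$.

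Next I would handle the endpoints. For $\mathcal{K}_l$ at $p=\infty$ (any $l>-1$) a pointwise bound $|\mathcal{K}_l f(x)|\le\|f\|_\infty\,x^{-(l+1)}\int_0^x y^l\,dy=\|f\|_\infty/(l+1)$ suffices; for $p=1$ (admissible only when $l>0$) Fubini gives
\[
\int_0^a|\mathcal{K}_l f(x)|\,dx\le\int_0^a|f(y)|y^l\int_y^a x^{-l-1}\,dx\,dy\le\frac{\|f\|_1}{l}.
\]
The corresponding endpoints for $\hat{\mathcal{K}}_l$ are symmetric: Fubini at $p=1$ yields $\|\hat{\mathcal{K}}_l g\|_1\le\|g\|_1/(l+1)$ for any $l>-1$, and a pointwise estimate at $p=\infty$ gives $\|\hat{\mathcal{K}}_l g\|_\infty\le\|g\|_\infty/l$ whenever $l>0$. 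In each case the resulting constant agrees with the limiting value of $p/(p(l+1)-1)$ or $p/(pl+1)$ as $p\to 1^+$ or $p\to\infty$, so \eqref{hi_01a}, \eqref{hi_02a} extend to the endpoints.

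For the continuity statements, continuity of $\mathcal{K}_l f$ on $(0,a]$ for $f\in C[0,a]$ is automatic, since $\int_0^x y^l f(y)\,dy$ is absolutely continuous in $x$ and $x^{l+1}$ is nonvanishing. At $x=0$ I would write $\mathcal{K}_l f(x)-f(0)/(l+1)=x^{-(l+1)}\int_0^x y^l(f(y)-f(0))\,dy$, whose absolute value is at most $(l+1)^{-1}\sup_{y\in[0,x]}|f(y)-f(0)|$ and hence tends to zero. For $\hat{\mathcal{K}}_l$ with $l>0$ I would use the identity $\int_y^\infty x^{-l-1}\,dx=y^{-l}/l$ to rewrite
\[
\hat{\mathcal{K}}_l f(y)-\frac{f(0)}{l}=y^l\int_y^a x^{-l-1}(f(x)-f(0))\,dx-\frac{f(0)\,y^l a^{-l}}{l};
\]
the second summand is $O(y^l)$, and the first is controlled by splitting the integral at a fixed small $\delta>0$ so that the part over $(y,\delta)$ is dominated by $\sup_{[0,\delta]}|f-f(0)|/l$ while the part over $(\delta,a)$ carries the harmless factor $y^l\to 0$.

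The main obstacle is essentially bookkeeping: one must verify that the endpoint constants produced by Fubini or pointwise estimation coincide with the boundary values of $p/(p(l+1)-1)$ and $p/(pl+1)$, and that the admissible ranges of $p$ dictated by the Hardy inequality plus endpoint computations match those asserted in the lemma. Beyond that, every step reduces to Fubini's theorem, elementary integration and a standard uniform-continuity argument.
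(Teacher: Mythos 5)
Your proof is correct and follows essentially the same route as the paper: the range $p\in(1,\infty)$ is reduced to the classical Hardy inequalities \eqref{hi_01}--\eqref{hi_02} on $(0,\infty)$ (via extension by zero), the endpoints $p=1,\infty$ are checked separately with the matching limiting constants, and the limits at $x=0$ follow from an elementary argument. The only harmless differences are in the sub-steps: the paper treats $p=1$ by a density/limiting argument in $p$ and obtains \eqref{hi_01contin}, \eqref{hi_02contin} via l'H\^{o}pital's rule, whereas you use Fubini and a uniform-continuity estimate, arriving at the same bounds.
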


\begin{proof}
Equation \eqref{hi_01a} follows from \eqref{hi_01} except for the boundary cases.
The case $p=\infty$ is trivial. For the case $p=1$ if $l>0$ consider bounded functions (which are dense) and take the limit $p\to 1$ in
$\eqref{hi_01a}$. Finally, \eqref{hi_01contin} follows from l'H\^{o}pital's rule.
Equation \eqref{hi_02a} is proven similar.
\end{proof}

Moreover, we will also need the case of Sobolev spaces $W^{1,p}(0,a)$. Recall that the norm of $f\in W^{1,p}$ is defined by
$\|f\|_{W^{1,p}}=\|f\|_{L^p}+\|f'\|_{L^p}$.

\begin{lemma}\label{lem:KlW}
Let $a>0$ and $l>-1$.
The operator $\mathcal{K}_l$ is a bounded operator in $W^{1,p}(0,a)$ viz.\
\be\label{hi_01B}
\|\mathcal{K}_l f \|_{W^{1,p}} \leq C_l \|f\|_{W^{1,p}}, \quad f\in W^{1,p}(0,a),
\ee
for any $p\in[1,\infty]$. Moreover,
\be\label{hi_lim1}
\lim_{x\to 0}x(\mathcal{K}_lf)'(x)= \frac{1}{l+1}\lim_{x\to 0} x f'(x)
\ee
whenever the limit on the right-hand side exists.

Similarly, the operator $\hat{\mathcal{K}}_l$ is bounded in $W^{1,p}(0,a)$ viz.\
\be\label{hi_02B}
\|\hat{\mathcal{K}}_l f \|_{W^{1,p}} \leq \hat{C}_l \|f\|_{W^{1,p}}, \quad f\in W^{1,p}(0,a),
\ee
 for any $p\in[1,\frac{1}{1-l})$ if $0<l\le 1$ and any $p\in[1,\infty]$ if $l> 1$.
Moreover,
\be\label{hi_lim2}
\lim_{x\to 0}x(\hat{\mathcal{K}}_l f)'(x)=\frac{1}{l} \lim_{x\to 0} x f'(x)
\ee
whenever the limit on the right-hand side exists.
\end{lemma}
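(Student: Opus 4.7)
The proof will rest on two derivative identities for the two integral operators:
\[
(\mathcal{K}_l f)'(x) = \mathcal{K}_{l+1}(f')(x), \qquad (\hat{\mathcal{K}}_l g)'(y) = \hat{\mathcal{K}}_{l-1}(g')(y) - a^{-l} g(a)\, y^{l-1}.
\]
The first follows by rewriting $(\mathcal{K}_l f)'(x) = x^{-1}(f(x) - (l+1)\mathcal{K}_l f(x))$ and then using the integration-by-parts identity $(l+1)\int_0^x y^l f(y)\,dy = x^{l+1} f(x) - \int_0^x y^{l+1} f'(y)\,dy$; the boundary contribution at $0$ vanishes because $f\in W^{1,p}(0,a)$ is continuous on $[0,a]$ and $l>-1$. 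The second follows analogously by rewriting $(\hat{\mathcal{K}}_l g)'(y) = l y^{-1} \hat{\mathcal{K}}_l g(y) - y^{-1} g(y)$ and integrating $\int_y^a x^{-l-1} g(x)\,dx$ by parts against the antiderivative $-x^{-l}/l$; this requires $l>0$ and produces the extra boundary term at $x=a$.

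Given these identities, the Sobolev bounds \eqref{hi_01B} and \eqref{hi_02B} reduce to Lemma~\ref{lem:Kl}. For $\mathcal{K}_l$, the derivative is bounded in $L^p$ by $\|\mathcal{K}_{l+1}(f')\|_p \le C\|f'\|_p$ for every $p\in[1,\infty]$ since $l+1>0$, and $\|\mathcal{K}_l f\|_p$ is either bounded directly via \eqref{hi_01a} or, in the remaining edge case (when $-1<l\le 0$ and $p\le 1/(l+1)$), by the pointwise estimate $|\mathcal{K}_l f(x)|\le \|f\|_\infty/(l+1)$ combined with the one-dimensional Sobolev embedding $W^{1,p}(0,a)\hookrightarrow L^\infty(0,a)$ valid for all $p\ge 1$. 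For $\hat{\mathcal{K}}_l$ the same scheme applies: $\|\hat{\mathcal{K}}_{l-1}(g')\|_p$ is controlled for the correct range of $p$ by \eqref{hi_02a}, the term $\|\hat{\mathcal{K}}_l g\|_p$ similarly, and the new boundary contribution has norm $|g(a)|\,\|y^{l-1}\|_{L^p(0,a)}$, which is finite precisely when $p(l-1)>-1$ --- exactly the constraint $p<1/(1-l)$ for $0<l\le 1$ stated in the lemma.

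For the limit formulas, multiplying the two derivative identities by $x$ and $y$ respectively yields
\[
x(\mathcal{K}_l f)'(x) = \mathcal{K}_l(h)(x), \qquad y(\hat{\mathcal{K}}_l g)'(y) = \hat{\mathcal{K}}_l(\tilde h)(y) - a^{-l} g(a)\, y^l,
\]
where $h(y) := y f'(y)$ and $\tilde h(x) := x g'(x)$. The boundary term $a^{-l} g(a)\, y^l$ tends to $0$ since $l>0$, and the continuity assertions \eqref{hi_01contin} and \eqref{hi_02contin} of Lemma~\ref{lem:Kl} identify the remaining limits as $h(0)/(l+1)$ and $\tilde h(0)/l$, which give exactly the right-hand sides of \eqref{hi_lim1} and \eqref{hi_lim2}. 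Since $h$ and $\tilde h$ are only assumed to possess a limit at $0$ rather than extend continuously to $[0,a]$, one needs a short $\varepsilon$--$\delta$ refinement of those continuity statements (comparing $h$ to the constant $h(0)$ near the origin and estimating the kernel explicitly), but this is a routine variation.

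The main technical nuisance is the boundary contribution $-a^{-l} g(a)\, y^{l-1}$ in the identity for $(\hat{\mathcal{K}}_l g)'$: it is precisely this term that dictates both the $p$-restriction in the $\hat{\mathcal{K}}_l$ part of the lemma (via the $L^p$-integrability of $y^{l-1}$) and the condition $l>0$ appearing in \eqref{hi_lim2} (via $y^l \to 0$). Beyond this bookkeeping, the argument is an entirely direct reduction to Lemma~\ref{lem:Kl}.
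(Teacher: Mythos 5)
Your proof is correct and follows essentially the same route as the paper: differentiate under the integral, integrate by parts to obtain $(\mathcal{K}_l f)'=\mathcal{K}_{l+1}(f')$ and the analogous identity for $\hat{\mathcal{K}}_l$, and then reduce everything to Lemma~\ref{lem:Kl}. The one substantive difference is to your credit: for the finite-interval operator $\hat{\mathcal{K}}_l g(y)=y^l\int_y^a x^{-l-1}g(x)\,dx$ the identity stated in the paper, $(\hat{\mathcal{K}}_l f)'=(\hat{\mathcal{K}}_{l-1}f')$, is literally false without the boundary contribution $-a^{-l}g(a)\,y^{l-1}$ (test $g\equiv 1$), and you keep this term explicitly, check that its $L^p$-norm is finite exactly when $p<\tfrac{1}{1-l}$ for $0<l\le 1$, and that multiplied by $y$ it vanishes as $y\to0$; this recovers the same $p$-range (which in the paper comes from applying \eqref{hi_02a} to $\hat{\mathcal{K}}_{l-1}$) and leaves the limit formula \eqref{hi_lim2} intact. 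Two further minor divergences: for the edge case $-1<l\le 0$, $p\le\tfrac1{l+1}$ you bound $\|\mathcal{K}_l f\|_p$ via $|\mathcal{K}_l f|\le\|f\|_\infty/(l+1)$ and the embedding $W^{1,p}(0,a)\hookrightarrow L^\infty(0,a)$, whereas the paper gets the same control from its integrated-by-parts representation $\mathcal{K}_l f=\tfrac1{l+1}(f-x\,\mathcal{K}_{l+1}f')$; and for the limits you use the direct $\varepsilon$--$\delta$ estimate on $\mathcal{K}_l(h)$, $\hat{\mathcal{K}}_l(\tilde h)$ with $h(y)=yf'(y)$ in place of the paper's appeal to l'H\^opital — the refinement you flag (comparing $h$ with the constant $\lim_{y\to0}h(y)$ and estimating the kernel) is indeed a one-line argument, so there is no gap there.
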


\begin{proof}
Integrating by parts,
\[
\mathcal{K}_l(f)(x) =\frac{1}{(l+1)}\left(f(x) - x^{-l-1}\int_0^x y^{l+1} f'(y) dy\right),
\]
we get
\be
(\mathcal{K}_lf)'(x) = (\mathcal{K}_{l+1}f')(x)
\ee
and the first claim follows from \eqref{hi_01a}. Equation \eqref{hi_lim1} follows again from l'H\^{o}pital's rule.

The second part is similar using
\be\label{hi_der02}
(\hat{\mathcal{K}}_l f)'(x) = (\hat{\mathcal{K}}_{l-1} f')(x).
\ee
\end{proof}

Concerning $l=0$, we note
\be\label{hi_03a}
(\hat{\mathcal{K}}_0 f)(x)=\int_x^a y^{-1} f(y) dy = -f(0)\log(x/a)+\int_x^a (\mathcal{K}_0 f')(y) dy
\ee
and hence, by \eqref{hi_01a}, the operator
\be\label{hi_03b}
(\widetilde{\mathcal{K}}_0 f)(x) := (\hat{\mathcal{K}}_0 f)(x) + f(0)\log(x)
\ee
is bounded on $W^{1,p}(0,a)$ for $p\in(1,\infty]$. Moreover,
\be\label{hi_03c}
\lim_{x\to 0} x (\hat{\mathcal{K}}_0 f)'(x) = - f(0).
\ee

To cover also the case $p=1$ we note
\be\label{hi_log}
\|\mathcal{K}_{\log}(f)\|_1 \leq \|f\|_1,\quad \mathcal{K}_{\log}(f):=\frac{1}{x}\int_0^x\frac{f(y)}{1-\log(y/a)} dy.
\ee
This follows from the next lemma upon choosing $I(x)=1-\log(x/a)$.

\begin{lemma}\label{lem:a_I}
Let $I(x)\in AC(0,a]$ with $I'(x) \le 0$ and (w.l.o.g.) $I(a)=1$. Consider
\[
(\mathcal{K}_{I} f)(x) := -I'(x) \int_0^x \frac{f(y)}{I(y)} dy, \qquad f\in L^1(0,a).
\]
Then
\[
\|\mathcal{K}_{I}(f)\|_1 \leq \|f\|_1.
\]
\end{lemma}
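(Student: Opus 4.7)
The plan is a direct Fubini--Tonelli computation exploiting the fact that $-I'(x)\,dx$ is a nonnegative measure and that $I(x)\ge I(a)=1$ on $(0,a]$ by monotonicity.

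First, I would bound the $L^1$ norm by moving absolute values inside the integral defining $\mathcal{K}_I$: since $-I'(x)\ge 0$,
\[
\|\mathcal{K}_I f\|_1 \;=\; \int_0^a (-I'(x))\left|\int_0^x \frac{f(y)}{I(y)}\,dy\right|dx \;\le\; \int_0^a\!\int_0^x (-I'(x))\,\frac{|f(y)|}{I(y)}\,dy\,dx.
\]
Next, I would apply Tonelli to interchange the order of integration on the region $\{0<y<x<a\}$, obtaining
\[
\|\mathcal{K}_I f\|_1 \;\le\; \int_0^a \frac{|f(y)|}{I(y)}\left(\int_y^a (-I'(x))\,dx\right)dy \;=\; \int_0^a \frac{|f(y)|}{I(y)}\bigl(I(y)-I(a)\bigr)dy,
\]
where the inner integral is evaluated using the fundamental theorem of calculus for absolutely continuous functions. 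Using $I(a)=1$ this simplifies to
\[
\|\mathcal{K}_I f\|_1 \;\le\; \int_0^a |f(y)|\left(1-\frac{1}{I(y)}\right)dy.
\]

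Finally, monotonicity ($I'\le 0$) together with $I(a)=1$ gives $I(y)\ge 1$ for all $y\in(0,a]$, so $0\le 1-1/I(y)\le 1$, and consequently
\[
\|\mathcal{K}_I f\|_1 \;\le\; \int_0^a |f(y)|\,dy \;=\; \|f\|_1,
\]
which is the claimed estimate. There is essentially no obstacle here: the only technical point worth checking is that the Tonelli step is legitimate, but the integrand is nonnegative and measurable on a bounded rectangle, so no subtlety arises. The hypothesis $I\in AC(0,a]$ is exactly what is needed to apply the fundamental theorem of calculus to $-I'$ on $[y,a]$.
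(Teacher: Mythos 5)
Your proof is correct. It differs from the paper's argument only in the bookkeeping: the paper bounds $\|\mathcal{K}_I f\|_1$ by the same nonnegative double integral and then evaluates it by integration by parts in $x$, arriving at exactly your intermediate bound $\int_0^a\bigl(1-\tfrac{1}{I(y)}\bigr)|f(y)|\,dy\le\|f\|_1$, whereas you evaluate it by Tonelli, computing the inner integral $\int_y^a(-I'(x))\,dx=I(y)-1$ via the fundamental theorem of calculus on $[y,a]$ with $y>0$. The two routes are essentially equivalent, but yours has a small technical advantage: the integration-by-parts route produces a boundary term $I(x)\int_0^x |f(y)|/I(y)\,dy$ at $x\to 0$ whose vanishing has to be checked (it does vanish, since $I$ is nonincreasing, but note $I$ may be unbounded at $0$, e.g.\ $I(x)=1-\log(x/a)$ in the intended application), while the Tonelli computation involves only the values of $I$ on $[y,a]$ with $y>0$ and so sidesteps the endpoint entirely. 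One nitpick: you only need $1-\tfrac{1}{I(y)}\le 1$, i.e.\ $I(y)>0$, for the last step; the observation $I(y)\ge 1$ (from monotonicity and $I(a)=1$) is what guarantees positivity and also that the kernel computation makes sense, so it is worth stating for that reason rather than merely for the upper bound.
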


\begin{proof}
Using integration by parts we obtain
\begin{align*}
\| \mathcal{K}_I f \|_1 & = \int_0^a \left| -I'(x) \int_0^x \frac{f(y)}{I(y)} dy \right| dx
\le -\int_0^a I'(x) \int_0^x \frac{|f(y)|}{I(y)} dy\, dx\\
&= - \int_0^x \frac{I(x)}{I(y)} |f(y)| dy \Big|_0^a + \int_0^aI(y) \frac{|f(y)|}{I(y)} dy\\
&= \int_0^a \left(1 - \frac{1}{I(y)}\right) |f(y)| dy \le \|f\|_1.
\end{align*}
\end{proof}

\section{Generalized Nevanlinna functions}\label{app:nkappa}

In this appendix we collect some information on the classes $N_\kappa$
of generalized Nevanlinna functions \cite{krlan}. By $N_\kappa$,  $\kappa \in \N_0$, we denote the set of
all functions $M(z)$ which are meromorphic in $\C_+\cup \C_-$, satisfy the symmetry condition
\be\label{eq:B.01}
M(z)=M(z^*)^*\quad
\ee
for all $z$ from the domain $\mathcal{D}_M$ of holomorphy of $M(z)$,
and for which the Nevanlinna kernel
\be\label{eq:B.02}
\mathcal{N}_M(z,\zeta)=\frac{M(z)-M(\zeta)^*}{z-\zeta^*},\quad
z,\zeta\in\mathcal{D}_M,\ \ z\neq\zeta^*,
\ee
has $\kappa$ negative squares. That is,
for any choice of finitely many points $\{ z_j \}_{j=1}^n \subset \mathcal{D}_M$ the matrix
\be
\left\{ \mathcal{N}_M(z_j,z_k) \right\}_{1\le j,k\le n}
\ee
has  at most $\kappa$ negative eigenvalues and exactly $\kappa$ negative eigenvalues for some choice of $\{z_j\}_{j=1}^n$.
Note that $N_0$ coincides with the class of Herglotz--Nevanlinna functions.

Let $M\in N_\kappa$, $\kappa\ge 1$. A point $\lam_0\in\R$ is said to be a generalized pole of non-positive
type of $M$ if either
\[
\limsup_{\eps\downarrow 0}\eps|M(\lam_0+\I\eps)|=\infty
\]
or the limit
\[
\lim_{\eps\downarrow 0}(-\I\eps)M(\lam_0+\I\eps)
\]
exists and is finite and negative. The point $\lam_0=\infty$ is said to be a generalized pole of non-positive type of $M$
if either
\[
\limsup_{y\uparrow\infty}\frac{|M(\I y)|}{ y}=\infty
\]
or
\[
\lim_{y\uparrow\infty}\frac{M(\I y)}{\I y}
\]
exists and is finite and negative. All limits can be replaced by non-tangential limits.

We are interested in the special subclass $N_\kappa^\infty \subset N_\kappa$ of generalized Nevanlinna function with no
nonreal poles and the only generalized pole of nonpositive type at $\infty$. It follows from Theorem~3.1 (and its proof) and
Lemma~3.3 of \cite{krlan} that

\begin{theorem}\label{th:gnf}
A function $M \in N_\kappa^\infty$ admits the representation
\be \label{minkappa}
M(z) = (1+z^2)^k \int_\R \left(\frac{1}{\lam-z} - \frac{\lam}{1+\lam^2}\right) \frac{d\rho(\lam)}{(1+\lam^2)^k}
+ \sum_{j=0}^l a_jz^j,
\ee
where $k\leq \kappa$, $l\le 2\kappa+1$,
\be\label{minkappa'}
a_j\in\R,
\quad\text{and}\quad
\int_\R(1+\lam^2)^{-k -1}d\rho(\lam)<\infty.
\ee
The measure $\rho$ is given by the Stieltjes--Liv\v{s}i\'{c} inversion formula
\be
\frac{1}{2} \left( \rho\big((\lam_0,\lam_1)\big) + \rho\big([\lam_0,\lam_1]\big) \right)=
\lim_{\eps\downarrow 0} \frac{1}{\pi} \int_{\lam_0}^{\lam_1} \im\big(M(\lam+\I\eps)\big) d\lam.
\ee
The representation \eqref{minkappa} is called irreducible if $k$ is chosen minimal,
that is, either $k=0$ or $\int_\R(1+\lam^2)^{-k} d\rho(\lam)=\infty$.

Conversely, if \eqref{minkappa'} holds, then $M(z)$ defined via \eqref{minkappa} is in $N_\kappa^\infty$
for some $\kappa$. If $k$ is minimal, $\kappa$ is given by:
\be\label{eq:b.08}
\kappa=\begin{cases}
k, & l\le 2k,\\
\floor{\frac{l}{2}}, & l\ge 2k+1,\ l\ \text{even, or},\ l\ \text{odd and}\ a_l>0,\\
\floor{\frac{l}{2}}+1, & l\ge 2k+1,\ l\ \text{odd and},\ a_l< 0.\end{cases}
\ee
\end{theorem}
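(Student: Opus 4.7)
The plan is to reduce both directions of Theorem~\ref{th:gnf} to the classical Herglotz integral representation of ordinary Nevanlinna functions, combined with a finite-dimensional signature count that controls the polynomial part and the $(1+z^2)^k$ regularization. The key structural input is that any $M\in N_\kappa^\infty$ is holomorphic on $\C\setminus\R$ (no nonreal poles) and grows at $\I\infty$ at most like $|z|^{2\kappa+1}$ (only generalized pole of nonpositive type there, with negative-square budget $\kappa$).

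For the forward direction I would carry out the Krein--Langer reduction-to-Nevanlinna-form procedure: produce the minimal integer $k$ with $0\le k\le\kappa$ and a real polynomial $P(z)=\sum_{j=0}^l a_jz^j$ of degree $l\le 2\kappa+1$ for which
$$
N(z):=(1+z^2)^{-k}(M(z)-P(z))\in N_0.
$$
The construction is inductive: at each step, either subtracting the leading term of the polynomial asymptotics at $\I\infty$ or dividing out a factor $(1+z^2)$ decreases the negative-square count of the Nevanlinna kernel by exactly one, and the procedure terminates when the regularized function is a genuine Herglotz--Nevanlinna function. The classical Herglotz representation applied to $N$ then produces a measure $d\sigma$ with $\int(1+\lam^2)^{-1}d\sigma(\lam)<\infty$. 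Setting $d\rho(\lam)=(1+\lam^2)^k d\sigma(\lam)$ and absorbing the affine Herglotz parameters of $N$ into the polynomial $P$ yields \eqref{minkappa} with the integrability \eqref{minkappa'}; the minimality of $k$ forces $\int_\R(1+\lam^2)^{-k}d\rho(\lam)=\infty$ whenever $k\ge 1$. The Stieltjes--Liv\v{s}i\'{c} inversion then follows in a standard way from the Poisson-kernel limit applied to $\im M(\lam+\I\eps)$, since $P(z)$ and the prefactor $(1+z^2)^k$ contribute only real-analytic real parts on $\R$ that vanish under $\im$.

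For the converse, given \eqref{minkappa} with minimal $k$ and \eqref{minkappa'}, I would split $M=M_{\mathrm{int}}+P$ and compute the signature of $\mathcal{N}_M$ on a generic finite test set. The integral term $M_{\mathrm{int}}(z)=(1+z^2)^k\widetilde N(z)$, with $\widetilde N\in N_0$, contributes exactly $k$ negative squares, as one checks by a direct factorization of $\mathcal{N}_{M_{\mathrm{int}}}$: the $(1+z^2)^k$ multiplier generates a rank-$2k$ Hermitian correction of signature $(k,k)$, and the minimality of $k$ ensures that the residual Herglotz piece is genuinely infinite-rank positive and cannot consume any of these $k$ negative squares. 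The polynomial part $P$ contributes through its Bezoutian $(P(z)-P(\zeta^*))/(z-\zeta^*)$, a Hermitian $(l\times l)$ kernel whose inertia is computable by the classical Hermite theorem: signature $(\floor{l/2},\floor{l/2})$ for $l$ even, and for $l$ odd an additional eigenvalue of sign $\sgn(a_l)$. Combining these two signatures and distinguishing the regimes $l\le 2k$ and $l\ge 2k+1$ produces exactly the case analysis \eqref{eq:b.08}.

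The main obstacle is this last signature calculation in the regime $l\ge 2k+1$, where one must rule out cancellation between the Bezoutian of $M_{\mathrm{int}}$ and that of $P$, and simultaneously pin down the extra $+1$ in $\kappa$ when $l$ is odd and $a_l<0$. I would confirm the count by evaluating $\mathcal{N}_M$ on explicit test points (for instance $\{\pm\I,\pm 2\I,\dots,\pm(n+1)\I\}$ together with one real point), reducing the negative-index determination to the inertia of a concrete Hermitian matrix whose entries are rational functions of the $a_j$ and of moments of $d\rho$; the leading behaviour of its characteristic polynomial as the test points are taken large then forces the stated parity/sign distinction.
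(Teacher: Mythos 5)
The paper does not actually prove Theorem~\ref{th:gnf}: it is imported verbatim from Krein--Langer (Theorem~3.1, its proof, and Lemma~3.3 of \cite{krlan}), supplemented by Langer's asymptotic characterization of the multiplicity of the generalized pole at $\infty$ quoted immediately after the theorem. So there is no in-paper argument to compare against line by line; what can be judged is whether your sketch would establish the statement on its own. The forward direction is in the spirit of the Krein--Langer reduction and is plausible in outline, although the claim that each subtraction of a leading asymptotic term or each division by $(1+z^2)$ lowers the negative-square count by exactly one is itself a nontrivial lemma that you assert rather than prove.

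The genuine gap is in the converse, at the step ``combining these two signatures.'' The number of negative squares of a sum of Nevanlinna kernels is only subadditive, and formula \eqref{eq:b.08} shows that strict subadditivity really occurs: for $l\le 2k$ the answer is $\kappa=k$ no matter what the inertia of the polynomial Bezoutian is. For instance, with minimal $k=1$ and $P(z)=-z$, the integral term contributes one negative square and the Bezoutian of $P$ contributes one more, yet $\kappa=1$, not $2$; so naive addition of signatures overcounts exactly in the regime $l\le 2k$, while in the regime $l\ge 2k+1$ you acknowledge but do not resolve the possible cancellation between the two kernels. The standard way to pin down $\kappa$ --- and the one the surrounding text of the appendix points to --- is not a direct inertia computation but the asymptotic criterion for the multiplicity of the generalized pole at $\infty$: $\kappa$ is the unique integer with $\lim_{y\uparrow\infty}-M(\I y)/(\I y)^{2\kappa-1}\in(0,\infty]$ and $\lim_{y\uparrow\infty}M(\I y)/(\I y)^{2\kappa+1}\in[0,\infty)$ (Theorem~3.2 of \cite{lan}); applying this to \eqref{minkappa} with $k$ minimal, so that $\int_\R(1+\lam^2)^{-k}d\rho(\lam)=\infty$ forces the integral term to dominate at order $2k$ but not $2k+1$, yields \eqref{eq:b.08} directly, including the sign condition on $a_l$ for odd $l$. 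Your proposed evaluation on explicit test points could in principle be turned into a proof, but as written it is a plan to confirm the answer rather than an argument that derives it.
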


For additional equivalent conditions we refer to Definition~2.5 in \cite{DLSh}.

Given a generalized Nevanlinna function in $N_\kappa^\infty$, the corresponding $\kappa$
is given by the multiplicity of the generalized pole at $\infty$ which is determined by the facts
that the following limits exist and take values as indicated:
\[
\lim_{y\uparrow \infty} -\frac{M(\I y)}{(\I y) ^{2\kappa-1}} \in (0,\infty],
\qquad
\lim_{y\uparrow \infty} \frac{M(\I y)}{(\I y) ^{2\kappa+1}} \in [0,\infty).
\]
Again the limits can be replaced by non-tangential ones.
This follows from Theorem~3.2 in \cite{lan}. To this end note that if $M(z) \in N_\kappa$, then $-M(z)^{-1}$,
$-M(1/z)$, and $1/M(1/z)$ also belong to $N_\kappa$. Moreover, generalized zeros of $M(z)$ are generalized poles
of $-M(z)^{-1}$ of the same multiplicity.

\begin{lemma}\label{lem:gnf:grow}
Let $M(z)$ be a generalized Nevanlinna function given by \eqref{minkappa}--\eqref{minkappa'} with $l<2k+1$.
Then, for every $0<\gam<2$, we have
\be
\int_\R \frac{d\rho(\lam)}{1+|\lam|^{2k+\gam}} < \infty \quad\Longleftrightarrow\quad
\int_1^\infty \frac{(-1)^k \im(M(\I y))}{y^{2k+\gam}} dy<\infty.
\ee
Concerning the case $\gam=0$ we have
\be
\int_\R \frac{d\rho(\lam)}{(1+\lam^2)^k} = \lim_{y\to\infty} \frac{(-1)^k \im(M(\I y))}{y^{2k-1}},
\ee
where the two sides are either both finite and equal or both infinite.
\end{lemma}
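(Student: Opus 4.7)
The plan is to reduce everything to an explicit formula for $\im M(\I y)$ and then separate the contributions of the integral and polynomial parts of the representation \eqref{minkappa}. For $z=\I y$ the factor $(1+z^2)^k=(1-y^2)^k$ is real and $\im\bigl(\frac{1}{\lam-\I y}-\frac{\lam}{1+\lam^2}\bigr)=\frac{y}{\lam^2+y^2}$, so
\[
(-1)^k\im M(\I y)=(y^2-1)^k\int_\R\frac{y\,d\rho(\lam)}{(\lam^2+y^2)(1+\lam^2)^k}+(-1)^kP(y),
\]
where $P(y):=\im\bigl(\sum_{j=0}^l a_j(\I y)^j\bigr)$ is a real polynomial in $y$ whose degree is at most $2k-1$, since only odd indices $j\le l\le 2k$ contribute.

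For $0<\gam<2$ the polynomial part is harmless: $\deg P\le 2k-1$ gives $\int_1^\infty |P(y)|/y^{2k+\gam}\,dy<\infty$ automatically. For the main term I apply Fubini (the integrand is nonnegative) to write
\[
\int_1^\infty\frac{(y^2-1)^k}{y^{2k+\gam}}\int_\R\frac{y\,d\rho(\lam)}{(\lam^2+y^2)(1+\lam^2)^k}\,dy=\int_\R\frac{K(\lam)\,d\rho(\lam)}{(1+\lam^2)^k},
\]
with $K(\lam):=\int_1^\infty\frac{y(y^2-1)^k}{y^{2k+\gam}(\lam^2+y^2)}\,dy$. The substitution $y=|\lam|t$ (for $|\lam|\ge1$) together with dominated convergence yields $K(\lam)=|\lam|^{-\gam}\bigl(\int_0^\infty t^{1-\gam}(1+t^2)^{-1}\,dt+o(1)\bigr)$, a finite positive constant since $0<\gam<2$; on compact sets $K$ is bounded by direct inspection. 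Combining these estimates with the a priori bound $\int d\rho/(1+\lam^2)^{k+1}<\infty$ from \eqref{minkappa'} (which handles small $\lam$), I obtain $\int K(\lam)(1+\lam^2)^{-k}d\rho(\lam)\asymp\int d\rho(\lam)/(1+|\lam|)^{2k+\gam}$, giving the claimed equivalence.

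For $\gam=0$ the method is monotone convergence. Dividing the displayed identity by $y^{2k-1}$ gives
\[
\frac{(-1)^k\im M(\I y)}{y^{2k-1}}=\int_\R\frac{y^2(1-1/y^2)^k}{\lam^2+y^2}\cdot\frac{d\rho(\lam)}{(1+\lam^2)^k}+\frac{(-1)^kP(y)}{y^{2k-1}}.
\]
The integrand is nonnegative, increasing in $y$, and converges pointwise to $1$ as $y\to\infty$, so monotone convergence identifies the limit of the integral as $\int_\R d\rho(\lam)/(1+\lam^2)^k$, finite or $+\infty$ as the case may be. The polynomial tail $(-1)^kP(y)/y^{2k-1}$ stays bounded (and in fact has a finite limit determined by $a_{2k-1}$), so when the integral is infinite both sides blow up together. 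In the finite case one invokes the irreducibility reduction: since $\int d\rho/(1+\lam^2)^k<\infty$ forces $k=0$ to be minimal (the case $k=0$, $l\le 1$ being handled by a direct computation showing $y\im M(\I y)\to\int d\rho$), both sides match in this regime as well.

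The main obstacle will be the careful bookkeeping of the polynomial remainder $P(y)$, in particular in the $\gam=0$ case where its boundedness (rather than decay) forces one to invoke the minimality of the representation to identify the two sides as actual numbers. The other technical point is the uniform two-sided bound $K(\lam)\asymp(1+|\lam|)^{-\gam}$ in the $\gam>0$ case, which plays the role of the Abelian–Tauberian bridge between the two forms of the measure condition and requires separate treatment of small and large $|\lam|$.
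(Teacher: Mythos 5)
Your treatment of the case $0<\gam<2$ is correct, and it amounts to a self-contained proof of the statement the paper simply cites from Kac--Krein: writing $(-1)^k\im M(\I y)$ as the nonnegative measure term $(y^2-1)^k\int_\R y\,d\rho(\lam)/\big((\lam^2+y^2)(1+\lam^2)^k\big)$ plus a real polynomial remainder $P$ of degree at most $2k-1$, applying Tonelli, and establishing the two-sided bound $K(\lam)\asymp(1+|\lam|)^{-\gam}$ (which is exactly where $0<\gam<2$ enters) gives the claimed equivalence, the polynomial being harmless since $y^{-1-\gam}$ is integrable at infinity. This is a legitimate and more elementary route than the paper's citation.

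The $\gam=0$ case has a genuine gap. Your monotone-convergence identification of the integral term is fine (the integrand is indeed nondecreasing in $y\ge 1$, and the limit is $\int_\R d\rho(\lam)/(1+\lam^2)^k$ whether finite or not), but the polynomial remainder does not go away: computing $\im\big(a_{2k-1}(\I y)^{2k-1}\big)$ shows that $(-1)^kP(y)/y^{2k-1}\to -a_{2k-1}$, so your own decomposition yields $\lim_{y\to\infty}(-1)^k\im M(\I y)/y^{2k-1}=\int_\R d\rho(\lam)/(1+\lam^2)^k-a_{2k-1}$ in the finite case. The proposed repair by an ``irreducibility reduction'' is not valid: finiteness of $\int_\R(1+\lam^2)^{-k}d\rho$ does not force the minimal exponent to be $0$ (it only says the given $k$ is not minimal; the minimal one can be any value down to the first $k'$ with $\int(1+\lam^2)^{-k'}d\rho=\infty$), and in any event passing to another representation changes both the power $y^{2k-1}$ and the polynomial, so it cannot be used to identify the two sides for the original $k$. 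Your argument does prove the statement when the integral is infinite (the bounded remainder is irrelevant there), but in the finite case the asserted equality holds only under the tacit assumption $a_{2k-1}=0$ (for instance when $l\le 2k-2$); this same assumption is silently used in the paper's own one-line proof, which evaluates only the integral term by monotone convergence. The honest way to close your proof is either to record this restriction explicitly or to state the limit with the extra constant $-a_{2k-1}$, rather than to appeal to minimality of the representation.
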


\begin{proof}
The first part follows directly from \cite[\S 3.5]{kk} (see also \cite[Lem.~9.20]{tschroe}). The second part follows by evaluating
the limit on the right-hand side using the integral representation plus monotone convergence (see e.g. \cite[\S 4]{kk}).
\end{proof}

\section{Super singular perturbations}
\label{app:ssp}

In this section we will collect necessary facts on rank one singular perturbations of self-adjoint operators (further details can be found in \cite{DHdS}, \cite{DKSh_05}, \cite{si_95}, see also references therein).

Let $H$ be an unbounded self-adjoint operator in $\hr$. Recall that to every such operator we can assign a scale
of Hilbert spaces $\hr_n$, $n\in\Z$, in the usual way: For $n\ge 0$ set $\hr_n = \dom(|H|^{n/2})$ together with the norm
$\|\psi\|_{\hr_n}=\| (1+|H|^{1/2})^n \psi\|$ and for $n<0$ let $\hr_n$ be the completion of $\hr$ with respect to the norm
$\|\psi\|_{\hr_n}=\| (1+|H|^{1/2})^n \psi\|$. Then the conjugate linear map $\psi \in \hr \subset \hr_{-n} \mapsto \spr{\psi}{.} \in \hr_{-n}^*$
is isometric and we can identify $\hr_{-n}$ with $\hr_n^*$ in a natural way. We will denote the corresponding dual pairing
between $\hr_n^*=\hr_{-n}$ and $\hr_n$ by $\dpr{.}{.}$. Note that $H$ gives rise to a unique extension $\widetilde{H}: \hr_n \to \hr_{n-2}$.

Choose $\varphi\in \hr_{-1}\setminus\hr_{0}$. Consider the following perturbation of $H$,
\be\label{eq_sing_p}
H_\vartheta:=H+\vartheta \dpr{\varphi}{.}\varphi,\qquad \vartheta \in \R\cup\{\infty\},
\ee
where the sum has to be understood as a form sum via the KLMN theorem (see e.g. \cite[Chapter 6.5]{tschroe}).
The operator
\be\label{eq:c.02}
H_{\min}:=H\lceil \ker (\varphi,.)
\ee
is symmetric in $\hr_0$ with deficiency indices $n_\pm(H_{\min})=1$ and the operators $H_\vartheta$ can be considered as a self-adjoint
extensions of $H_{\min}$.

Then the function
\be\label{eq:c.03}
M(z):=\dpr{\varphi}{(\widetilde{H}-z)^{-1}\varphi},\quad z\in \C_+\cup\C_-,
\ee
is well defined for all $z\in\C_+\cup\C_-$ and
is called \emph{the Weyl function} of the symmetric operator $H_{\min}$. It is also \emph{the $Q$-function of the pair $\{H,H_{\min}\}$} in the sense of Krein and Langer \cite{krlan_71}. Namely,
\be\label{eq:c.04}
\frac{M(z)-M(\zeta)}{z-\zeta}= \gamma(\zeta^*)^*\gamma(z)=\dpr{\gamma(\zeta^*)}{\gamma(z)},
\quad \gamma(z):=(\widetilde{H}-z)^{-1}\varphi \in \hr_1,
\ee
where the function $\gamma(z): \C\setminus\R \to\hr_1$ is called \emph{the $\gamma$-field}.
Moreover, the self-adjoint extensions $H_\vartheta$ of $H_{\min}$ can be parameterized via Krein's resolvent formula
\be\label{eq:c.krein_f}
(H_\vartheta-z)^{-1}=(H-z)^{-1}+ \frac{1}{\vartheta^{-1} - M(z)}\dpr{\gamma(z^*)}{.} \gamma(z),\quad z\in\C_+\cup\C_-.
\ee
Note that $M(z)$ is a Herglotz--Nevanlinna function and admits the following representation
\be\label{eq:c.05}
M(z)=c+\int_{\R}\Big(\frac{1}{\lam-z}-\frac{\lam}{1+\lam^2}\Big)d\rho(\lam),
\ee
where $\rho$ is a positive measure on $\R$ satisfying
\be\label{eq:c.06}
\int_{\R}\frac{d\rho(\lam)}{1+\lam^2}<\infty.
\ee
It is well known that the spectral properties of $H$ are closely connected with the properties of $M$\footnote{Without loss of generality we can assume that $H_{\min}$ is simple, i.e., $\hr=\lspan\{\gamma(z): z\in\C_+\cup\C_-\}$.}. Namely,  there is a unitary transformation $U:\hr \to L^2(\R,d\rho)$ such that
$H$ is unitary equivalent to the multiplication operator
\be\label{eq:c.08}
T\widehat{f}=\lam\widehat{f}(\lam),\quad \dom(T)=\Big\{\widehat{f}\in L^2(\R,d\rho):\ \int_\R \lam^2|\widehat{f}(\lam)|^2d\rho(\lam)<\infty\Big\}.
\ee
In particular, the minimal operator $H_{\min}$ is unitary equivalent to
\be\label{eq:c.09}
T_{\min}:=T\lceil\dom(T_{\min}),\quad \dom(T_{\min})=\Big\{\widehat{f}\in \dom(T):\ \int_\R \widehat{f}(\lam)d\rho(\lam)=0\Big\},
\ee
that is, the corresponding unitary operator $U$ maps the boundary condition $\dpr{f}{\varphi}=0$ into $\int_\R \widehat{f}(\lam)d\rho(\lam)=0$.
In particular, the latter means $\widetilde{U}(\varphi)=1$, $U(\gamma(z))=\frac{1}{\lam-z}$. Since $\varphi\in \hr_{-1}\setminus\hr_0$,
we get that $M$ is an $R_0$-function, that is,
\be\label{eq:c.10}
M(z)=\int_\R \frac{d\rho(\lam)}{z-\lam},
\ee
where
\be\label{eq:c.11}
\int_{\R}d\rho(\lam)=\infty,\qquad
\int_{\R}\frac{d\rho(\lam)}{1+|\lam|}<\infty.
\ee
\begin{example}\label{ex:c.01}
Let $q\in L^1_{\mathrm{loc}}(\R_+)$ and $q\in L^1(0,1)$. Let $H_q$ be the Sturm--Liouville operator corresponding to the Neumann boundary condition at $x=0$,
\[
H_q^Nf=\tau f, \quad \tau:=-\frac{d^2}{dx^2}+q(x),\quad \dom(H_q^N)=\{f\in \dom(H_{\max}):\ f'(0)=0\}.
\]
It is also assumed that $\tau$ is limit point at $+\infty$, i.e., the operator $H_q^N$ is self-adjoint in $L^2(\R_+)$.
Setting $\varphi=\delta$, where $\delta$ is the Dirac delta distribution, we find
\[
H_{\min}=-\frac{d^2}{dx^2}+q(x),\quad \dom(H_{\min})=\{f\in\dom(H_{\max}): \ f(0)=f'(0)=0\}.
\]
Let $c(z,x)$ and $s(z,x)$ be entire solutions of $\tau y=z y$ such that $c(z,0)=s'(z,0)=1$ and $c'(z,0)=s(z,0)=0$. The Weyl solution is given by
\[
\psi(z,x)=s(z,x)+m(z)c(z,x)\in L^2(\R_+).
\]
Here $m(z)$ is the Weyl--Titchmarsh m-function.
Clearly,
\[
m(z)=\psi(z,0)=\dpr[L^2]{\delta}{(H_q^N-z)^{-1}\delta}.
 \]
Moreover, the unitary transformation $U$, which maps $H_q^N$ to $T$ defined by \eqref{eq:c.08}, is the usual Fourier transform
\[
\widehat{f}(\lam)=(Uf)(\lam):= \lim_{b\to +\infty}\int_{0}^bf(x)s(\lam,x)dx,
\]
where the right-hand side is to be understood as a limit in $L^2(\R,d\rho)$.
\end{example}

If $\varphi\in \hr_{-2}\setminus\hr_{-1}$, then the operator \eqref{eq_sing_p} can be given a meaning via the extension theory approach as follows:
The operator $H_{\min}$ defined by \eqref{eq:c.02} is symmetric with $n_\pm(H_{\min})=1$ and the Weyl function for $H_{\min}$
(the $Q$-function for the pair $\{H,H_{\min}\}$) can be defined in a similar way, however, appropriate regularization of \eqref{eq:c.03} is needed.
Namely, set
\be\label{eq:c.16}
M(z):=\dpr{\varphi}{\left((\widetilde{H}-z)^{-1}-\mathcal{R}\right)\varphi},\qquad \mathcal{R}=\re \Big( (\widetilde{H}-\I)^{-1}\Big),\quad z\in\C_+\cup\C_-.
\ee
In this case, $M$ is a Herglotz--Nevanlinna function having the form \eqref{eq:c.05}, where the measure satisfies
\be\label{eq:c.17}
\int_{\R}\frac{d\rho(\lam)}{1+|\lam|}=\infty, \qquad \int_{\R}\frac{d\rho(\lam)}{1+\lam^2}<\infty,
\ee
since $\varphi\in \hr_{-2}\setminus\hr_{-1}$.

Let us remark that in the case $\varphi\in \hr_{-2}\setminus\hr_{-1}$ the perturbed operator $H_\vartheta$ is not uniquely defined anymore. It can only be concluded that $H_\vartheta$ coincides with one of the self-adjoint extensions parameterized by the Krein formula
\[
(H_\vartheta-z)^{-1}=(H-z)^{-1}+ \frac{1}{\widetilde{\vartheta}^{-1} - M(z)}\dpr{\gamma(z^*)}{.} \gamma(z),\quad z\in\C_+\cup\C_-.
\]
and additional assumptions on $H$ and $\varphi$ are needed for establishing the connection between $\vartheta$ and $\widetilde{\vartheta}$.

Singular perturbations by $\varphi \in \hr_{-n-1}\setminus\hr_{-n}$ with $n\ge 2$ cannot be treated in terms of the extension theory of the operator $H_{\min}$ in the original space $\hr$ since the operator $H_{\min}$   is essentially self-adjoint in $\hr$, $\overline{H_{\min}}=H=H^*$. However, starting from the pioneering work \cite{be}, there is an
interpretation for the singular perturbations $H_{\vartheta}$ as exit space extensions of an appropriate restriction
of $H$ (see \cite{sho, DHdS, DKSh_05}). These extensions act in a space which is a finite-dimensional extension of
$\hr$. They are non-self-adjoint with respect to the underlying Hilbert space inner
product, but become self-adjoint when a suitable Pontryagin space scalar product
is introduced.

Namely, consider the $\gamma$-field $\gamma(z)=(\widetilde{H}-z)^{-1}\varphi$. Note that $\gamma(z)\notin \hr$ since
$\varphi \in \hr_{-n-1}\setminus\hr_{-n}$ and hence $(\widetilde{H}-z)^{-1}\varphi \in \hr_{-n+1}\setminus\hr_{-n+2}$. To give a sense to the element $\gamma(z)$ and hence to the resolvent formula \eqref{eq:c.krein_f}, let us extend the space $\hr$ by adding the following elements
\be\label{eq:c.12}
\varphi_j:=(\widetilde{H}-\I)^{-j}\varphi,\quad j\in\{1,\dots, k_n\},\qquad k_n:=\floor{n/2}.
\ee
Then the vector
\be\label{eq:c.13}
\gamma(z)=(\widetilde{H}-z)^{-1}\varphi=\sum_{j=1}^{k_n} (z-\I)^{j-1}\varphi_j + (z-\I)^{k_n}(\widetilde{H}-z)^{-1}\varphi_{k_n}
\ee
can be considered as a vector from an extended inner product space $\widetilde{\hr}$ which
contains both $\hr$ and the vectors \eqref{eq:c.12}. In this space the continuation $\widetilde{H}$ of $H$ generates a linear relation $H'$, for which the operator function \eqref{eq:c.13} can be interpreted to form its $\gamma$-field in the sense that
\[
\gamma(z)-\gamma(\zeta)=(z-\zeta)(H'-z)^{-1}\varphi,\quad z,z\in \C_+\cup\C_-.
\]
The inner product $\spr{.}{.}_{\widetilde{\hr}}$ in $\widetilde{\hr}$ should coincide with the form $(.,.)_{\hr}$ generated
by the inner product in $\hr$ if the vectors $u,v$ are in duality, $u\in \hr_{-j}$ and $v\in \hr_{j}$, $j\in \{0,\dots, k_n\}$. For the other vectors in \eqref{eq:c.13} it is supposed
\[
\spr{\varphi_j}{\varphi_i}_{\widetilde{\hr}}=t_{j+i-1},\quad i,j\in \{1,\dots,k_n\},
\]
where $\{t_j\}_{j=0}^{2k_n-1}\subset \R$. The corresponding inner product has precisely $\kappa=k_n$ negative squares (see \cite[\S 4.3]{DKSh_05}). We omit the detailed construction of the exit space as well as the description of extensions (interested reader can find further details in \cite{DHdS, DKSh_05}). Let us only note that one can choose the constants $t_j$ such that the function
\begin{align}
M(z)&:= (z^2+1)^{k_n}\dpr{\varphi_{k_n}}{(\widetilde{H}-z)^{-1}\varphi_{k_n}}\label{eq:c.14}\\
&=(z^2+1)^{k_n}\dpr{(\widetilde{H}-\I)^{-k_n}\varphi}{(\widetilde{H}-z)^{-1}(\widetilde{H}-\I)^{-k_n}\varphi},\nn
\end{align}
if $n=2k_n+1$ and
\begin{align}
M(z)&:= (z^2+1)^{k_n}\dpr{\varphi_{k_n}}{\left((\widetilde{H}-z)^{-1}-\mathcal{R}\right)\varphi_{k_n}},\label{eq:c.15}\\
&\mathcal{R}=\re\Big( (\widetilde{H}-\I)^{-1}\Big)=\frac{1}{2} \left((\widetilde{H}-\I)^{-1}+(\widetilde{H}+\I)^{-1} \right),\nn
\end{align}
if $n=2k_n+2$, is the $Q$--function for $H'$, i.e.,
\[
\frac{M(z)-M(\zeta)}{z-\zeta}=\gamma(\zeta^*)^*\gamma(z)=\spr{\gamma(\zeta^*)}{\gamma(z)}_{\widetilde{\hr}}.
\]
Observe that $M(\cdot)$ is a generalized Nevanlinna function and $M\in N_{k_n}^\infty$. Indeed, since $\varphi_{k_n}\in\hr_{-2}\setminus\hr$, the function
\[
M_0(z)=\frac{M(z)}{(z^2+1)^{k_n}}
\]
admits the representation either \eqref{eq:c.10}--\eqref{eq:c.11} or \eqref{eq:c.16}--\eqref{eq:c.17}. It remains to apply Theorem \ref{th:gnf}.

The function $M(z)$ can be considered as a regularization of the function defined by \eqref{eq:c.03} and
will be called \emph{the singular Weyl function} for the operator $H$. Note that $M(z)$ characterizes the pair $\{H,\ H'\}$ up to unitary equivalence.

\quad

\noindent
{\bf Acknowledgments.}
We thank Vladimir Derkach, Fritz Gesztesy, Annemarie Luger, Mark Malamud, and Alexander Sakhnovich for several helpful discussions.
We also thank the anonymous referee for valuable suggestions improving the presentation of the material.
A.K. acknowledges the hospitality and financial support of the Erwin Schr\"odinger Institute and the financial
support from the IRCSET PostDoctoral Fellowship Program.


\begin{thebibliography}{XXXX}
\bibitem{as}
M. Abramovitz and I.A. Stegun, {\em Handbook of Mathematical
Functions}, Dover, New York, 1972.
\bibitem{ahm}
S. Albeverio, R. Hryniv, and Ya. Mykytyuk, {\em Inverse spectral problems for Bessel operators}, J. Diff. Eqs. {\bf 241}, 130--159 (2007).
\bibitem{be}
F. A. Berezin, {\em On the Lee model}, Mat. Sb. {\bf 60}, 425--453 (1963). (Russian)
\bibitem{boc}
M. B\^{o}cher, {\em On regular singular points of linear differential equations of the second order
whose coefficients are not necessarily analytic}, Trans. Amer. Math. Soc. {\bf 1}, 40--52 (1900).
\bibitem{bg}
W. Bulla and F. Gesztesy, {\em Deficiency indices and singular boundary conditions in quantum mechanics},
J. Math. Phys. {\bf 26:10}, 2520--2528 (1985).
\bibitem{Der98}
V. Derkach, {\em Extensions of Laguerre operators in indefinite inner product spaces}, Math. Notes\ {\bf 63}, 449--459 (1998).
\bibitem{DHdS}
V. Derkach, S. Hassi, and H. de Snoo, {\em Singular perturbations of self-adjoint operators}, Math. Phys. Anal. Geom.\ {\bf 6}, 349--384 (2003).
\bibitem{DKSh_05}
A. Dijksma, P. Kurasov, and Yu. Shondin, {\em Higher order singular rank one perturbations of a positive operator}, Int. Equat. Oper. Theory {\bf 53}, 209--245 (2005).
\bibitem{DLSZ}
A. Dijksma, H. Langer, Y. Shondin, and C. Zeinstra, {\em Self-adjoint operators with inner singularities and Pontryagin spaces},
Operator theory and related topics, Vol. II (Odessa, 1997), 105--175, Oper. Theory Adv. Appl. {\bf 118}, Birkh\"auser, Basel, 2000.
\bibitem{DLSh}
A. Dijksma, A. Luger, and Y. Shondin, {\em Approximation of $\mathcal{N}_\kappa^\infty$-functions I: Models and regularization},
in ``Spectral Theory in Inner Product Spaces and Applications'', J. Behrndt (ed.) et al., 87--112, Oper. Theory Adv. Appl. {\bf 188}, Birkh\"auser, Basel, 2008.
\bibitem{DSh_00}
A. Dijksma and Yu. Shondin, {\em Singular point-like perturbations of the Bessel Operator in a Pontryagin space}, J. Diff. Eqs. {\bf 164}, 49--91 (2000).
\bibitem{ek}
W.N. Everitt and H. Kalf, {\em The Bessel differential equation and the Hankel transform}, J. Comput. Appl. Math. {\bf 208}, 3--19 (2007).
\bibitem{ful08}
C. Fulton, {\em Titchmarsh--Weyl m-functions for second order Sturm-Liouville problems}, Math. Nachr. {\bf 281}, 1417--1475 (2008).
\bibitem{fl_09}
C. Fulton and H. Langer, {\em Sturm-Liouville operators with singularities and generalized Nevanlinna functions},
Complex Anal. Oper. Theory {\bf 4}, 179--243 (2010).
\bibitem{gz}
F. Gesztesy and M. Zinchenko, {\em On spectral theory for Schr\"odinger
operators with strongly singular  potentials}, Math. Nachr. {\bf 279}, 1041--1082 (2006).
\bibitem{gr}
J.-C. Guillot and J.V. Ralston, {\em Inverse spectral theory for a singular Sturm--Liouville operator on $[0,1]$},
J. Diff. Eqs. {\bf 76}, 353--373 (1988).
\bibitem{hlp}
G.H. Hardy, J.E. Littlewood, and G. Polya, {\em Inequalities,} 1934.
\bibitem{kk}
I. S. Kac and M. G. Krein, {\em $R$-functions --- analytic functions mapping the upper halfplane into itself},
in Amer. Math. Soc. Transl. Ser. (2), {\bf 103}, 1--19 (1974).
\bibitem{kst}
A. Kostenko, A. Sakhnovich, and G. Teschl, {\em Inverse eigenvalue problems for perturbed spherical Schr\"odinger operators},
Inverse Problems {\bf 26}, 105013, 14pp (2010).
\bibitem{kst2}
A. Kostenko, A. Sakhnovich, and G. Teschl, {\em Weyl--Titchmarsh theory for Schr\"odinger operators with strongly singular potentials}, \arxiv{1007.0136}.
\bibitem{krlan_71}
M. G. Krein and G. K. Langer, {\em The defect subspaces and generalized resolvents of a Hermitian operator in the space $\Pi _{\kappa }$},
Funct. Anal. Appl. {\bf 5:2}, 59--71 (1971). (Russian)
\bibitem{krlan}
M. G. Krein and H. Langer, {\em \"Uber einige Fortsetzungsprobleme, die eng mit der Theorie hermitescher
Operatoren im Raume $\Pi_\kappa$ zusammenh\"angen. Teil I: Einige Funktionenklassen und ihre Darstellungen},
Math. Nachr. {\bf 77}, 187--236 (1977).
\bibitem{Ku}
P. Kurasov, {\em $\mathcal{H}_{-n}$-perturbations of self-adjoint operators and KreinÕs resolvent formula},
Integral Equations Operator Theory {\bf 45}, 437--460 (2003).
\bibitem{KurLug}
P. Kurasov and A. Luger, {\em An operator theoretic interpretation of the generalized Titchmarsh--Weyl coefficient for the singular Sturm--Liouville problem}, preprint.
\bibitem{lan}
H. Langer,  {\em A characterization of generalized zeros of negative type of functions of the class $N_\kappa$},
in ``Advances in Invariant Subspaces and Other Results of Operator Theory'',
Oper. Theory, Adv. Appl. {\bf 17}, 201--212 (1986).
\bibitem{lv}
M. Lesch and B. Vertman, {\em Regular-singular Sturm-Liouville operators and
their zeta-determinants}, \arxiv{1005.0368}.
\bibitem{se}
F. Serier, {\em The inverse spectral problem for radial Schr\"odinger operators on $[0, 1]$}, J. Diff. Eqs. {\bf 235},
101--126 (2007).
\bibitem{sho}
 Yu. G. Shondin, {\em Quantum-mechanical models in $R_n$ associated with extensions of the
energy operator in Pontryagin space}, Theor. Math. Phys. {\bf 74}, 220--230 (1988).
\bibitem{si_95}
B. Simon, {\em Spectral analysis of rank one perturbations and applications}, in ``Proc. Mathematical Quantum Theory II: Schr\"odinger Operators'',
Froese and L.M. Rosen (eds),
CRM Proc. Lecture Notes 8, Amer. Math. Soc., Providence, RI, 1995.
\bibitem{tschroe}
G. Teschl, {\em Mathematical Methods in Quantum Mechanics; With Applications to Schr\"odinger Operators},
Graduate Studies in Mathematics, Amer. Math. Soc., Rhode Island, 2009.
\bibitem{wdln}
J. Weidmann, {\em Spectral Theory of Ordinary Differential Operators},
Lecture Notes in Mathematics {\bf 1258}, Springer, Berlin, 1987.
\end{thebibliography}
\end{document}